\documentclass{article}
\usepackage{amsmath,amssymb,amsxtra}
\usepackage{mathrsfs}
\usepackage{float,verbatim}   
\usepackage{geometry}
\usepackage{threeparttable,booktabs}
\usepackage{graphicx,graphics}
\graphicspath{{./figs/}}
\usepackage{epstopdf}
\usepackage{makecell}
\usepackage{mathtools}
\usepackage{subfig}
\usepackage{algorithm,algorithmic}
\usepackage{color}
\usepackage{latexsym,float,tikz}
\usepackage{array}
\usepackage{dsfont}
\usepackage{bm}
\usepackage{hyperref}
\hypersetup{colorlinks=true,linkcolor=blue,citecolor=blue,urlcolor=blue}

\usepackage{amsthm}
\theoremstyle{plain}
\newtheorem{theorem}{Theorem}[section]
\newtheorem{lemma}[theorem]{Lemma}
\newtheorem{proposition}[theorem]{Proposition}

\newtheorem{example}[theorem]{Example}

\theoremstyle{remark}
\newtheorem{remark}[theorem]{Remark}

\renewcommand{\d}{{\rm \,d}}
\def \mcd {{\mathcal D}}
\def \mce {{\mathcal E}}

\def \mbc {{\mathbb C}}

\def \mbr {{\mathbb R}}
\def \mbs {{\mathbb S}}

\def \beqq {\begin{equation}}
\def \eeqq {\end{equation}}
\def \bpf {\begin{proof}}
\def \epf {\end{proof}}
\def \beq {\begin{equation*}}
\def \eeq {\end{equation*}} 
\def \eps {\epsilon}   
   
\def \La {\Lambda}    
   
\def \lap {\Delta}
\def \p {\partial}
\def \ha {\frac{1}{2}}
\numberwithin{equation}{section}
\renewcommand{\d}{\,\mathrm{d}}
\allowdisplaybreaks

\allowdisplaybreaks

\title{{Stability of Electrical Impedance Tomography with Anisotropies and its Application to the Deep Calde\'on Method}\thanks{T. Hu is partly supported by NSFC (Project 125B2022). B. Jin is partly supported by Hong Kong RGC General Research Fund (14306824) and ANR / Hong Kong RGC Joint Research Scheme (A-CUHK402/24), NSFC / RGC Joint Research Scheme (N\_CUHK446/25) and a start-up fund from The Chinese University of Hong Kong. Y. Wang is partly supported by NSF under grant DMS-2508559.}}
\date{}
\author{Tianhao Hu\thanks{Department of Mathematics, The Chinese University of Hong Kong, Shatin, N.T., Hong Kong, P.R. China (\texttt{thhu@link.cuhk.edu.hk}, \texttt{b.jin@cuhk.edu.hk})} \and Bangti Jin\footnotemark[2] \thanks{AI for Science and Engineering Center, Shenzhen Loop Area Institute, Shenzhen, China} 
\and Yiran Wang\thanks{Department of Mathematics, Emory University, 400 Dowman Drive, Atlanta, GA, 30322, USA
(\texttt{yiran.wang@emory.edu})}}

\begin{document}

\maketitle
\begin{abstract}
In this work, we establish new conditional Lipschitz stability results for electrical impedance tomography (EIT) with anisotropies, of recovering the conductivity in a conformal class of a known anisotropic conductivity in both two- and multi-dimensional cases. Then we employ the stability theory to understand the property of the deep Calder\'on method, one deep learning-based technique for image reconstruction in EIT that has shown promising empirical results, but still lacks theoretical underpinnings. Specifically, we relate the stability theory to the robustness of the method with the proper choice of the training data, and present numerical results in two-dimension to complement the theoretical analysis. 
\end{abstract}

\noindent\textbf{Keywords:} deep Calder\'on method, anisotropic conductivity, electrical impedance tomography, robustness, stability estimate

\noindent\textbf{AMS subject classifications:} 35R30, 65N21

\section{Introduction} 
Electrical impedance tomography (EIT) is a noninvasive imaging technique exploiting the electric properties (conductivity and permittivity) of the imaging object. We refer to \cite{FSU} for the physical and medical background of the problem. Typically, one makes the assumption that the conductivity is isotropic. However, there are many important examples of anisotropic conductors, e.g., muscle tissues in the human body and skull structures in head imaging \cite{FosterSchwan:1989,AmmariGarnier:2016}. Mathematically, the anisotropic EIT problem in the two-dimensional case can be described as follows. Let $\Omega \subset\mathbb{R}^2$ be an open bounded domain with a smooth boundary $\p \Omega$. We use $x = (x_1, x_2)$ for the coordinates of $\mbr^2$. Let $\gamma = (\gamma_{ij})_{i, j = 1}^2$ be an anisotropic electrical impedance and $u$ be the electrical potential. Then $u$ satisfies the following conductivity equation 
\begin{equation}\label{eq-eit}
\left\{\begin{aligned}
\nabla \cdot (\gamma(x) \nabla u(x))  = \sum_{i, j = 1}^2 \frac{\p }{\p x_i}\Big(\gamma_{ij}(x)\frac{ \p u} {\p x_j} (x)\Big) &= 0, \quad \text{in } \Omega,\\
u & = f,\quad \mbox{on }\partial\Omega.
\end{aligned}\right.
\end{equation}
Under the standard boundedness and ellipticity conditions on $\gamma$, given a voltage distribution $f$ on the boundary $\p \Omega$,  there exists a unique solution $u$ of problem \eqref{eq-eit}. Then we measure the current density $g$ on $\p \Omega$, given by 
\beq
g(x) = \nu(x)\cdot \gamma(x) \nabla u(x)|_{\p \Omega} = \sum_{i, j = 1}^2 \nu_i(x) \gamma_{ij}(x) \frac{\p u}{\p x_j}(x)|_{\p \Omega},
\eeq
where $\nu(x) = (\nu_1(x), \nu_2(x))\in \mbs^1$ denotes the unit outward normal vector at the point $x\in \partial\Omega$ to the boundary $\p \Omega$.  For the EIT problem, the measurement is the Dirichlet-to-Neumann map  
\beqq\label{eq-dtn}
\La_\gamma: f(x) \rightarrow g(x), 
\eeqq
and the goal is to recover the conductivity $\gamma$ from the Dirichlet-to-Neumann map $\La_\gamma.$
 
In this work, we are interested in numerical methods for reconstructing conductivity images for the EIT. This is particularly challenging because the EIT inverse problem is severely ill-posed. For the isotropic problem, a logarithmic type stability estimate was proved  in \cite{Liu}.  More precisely, for two conductivities $\gamma_j = \sigma_j I$, $j = 1, 2$, where $I\in\mathbb{R}^{2\times 2}$ denotes the identity matrix and $\sigma_j$ are positive and bounded scalar functions, there holds 
\beqq\label{eq-logest} 
\|\sigma_1 - \sigma_2\|_{L^\infty(\Omega)} \leq C \omega(\|\La_{\gamma_1} - \La_{\gamma_2}\|_{*}), 
\eeqq
where $\|\cdot\|_*$ denotes the operator norm from $H^\ha(\p \Omega)$ to $H^{-\ha}(\p \Omega)$ and the function $\omega$ is such that 
$\omega(\tau)\leq |\log \tau|^{-\kappa}$ for some $\kappa>0$ and sufficiently small $\tau>0.$  It is later proved in \cite{Man} that such an estimate is optimal. In practice, reconstruction algorithms for EIT often rely on  variational regularization \cite{EnglHankeNeubauer:1996,ItoJin:2015}, which constructs an approximation by minimizing an objective functional that consists of a data fitting term and a penalty term. The penalty term is designed to tackle the inherent ill-posedness of the inverse problem.

Recently, there is significant interest in deep learning based methods for EIT image reconstruction. This is driven by the significant advances in dedicated computing architecture, the availability of paired training data and the development of novel training algorithms etc. In the last few years, several deep learning based approaches have been proposed, e.g., direct inversion (e.g.\ using fully connected neural networks \cite{chen2020deep}, operator learning \cite{Las}), postprocessing type (e.g., deep D-bar method \cite{Ham} and deep Calder\'on method \cite{Cen}), deep direct sampling method \cite{GuoJiang}, and algorithmic unrolling (e.g., deep gradient method and deep Gauss-Newton method) \cite{ChenZhouYang:2022,colibazzi2023deep}. We refer interested readers to the reviews \cite{Dim,Tanyu} for detailed discussions, including comparative studies with more traditional methods. In particular, postprocessing type methods first employ a traditional reconstruction method to obtain an initial guess and then postprocess the initial reconstruction using convolutional neural networks that are supervisedly trained using paired training dataset. By suitably combining the physical knowledge with a priori knowledge encoded in the training dataset (learned by the training procedure), this class of methods exhibits strong empirical performance and thus has been extensively employed.

When solving inverse problems via deep learning, especially for medical imaging applications, instability phenomena, e.g., hallucination may occur \cite{Ant, Sze}, when the test data deviates from the distribution of training data, even if only very slightly. Recent studies (e.g. \cite{Col, Got, Sca}) indicate that any stable and accurate reconstruction procedure must be ``kernel aware". More precisely, consider an under-determined system of noisy linear equations 
$y = A x + e \in \mbc^m$,
where $A\in \mbc^{m\times n}$ represents a sampling model with $m < n$ and $e$ denotes the measurement error. The task is to recover $x\in \mbc^n$ from $y\in \mbc^m$, given $A\in \mathbb{C}^{m\times n}$. A reconstructor $\Psi: \mbc^m\rightarrow \mbc^n$ is said to lack kernel awareness if it  can approximately recover two vectors $x,x'\in\mathbb{C}^n$ in the sense that \cite[p. 5]{Col}
\beq
\|\Psi(A x) - x\| \leq \eps\quad \text{and}\quad\|\Psi(A x') - x'\| \leq \eps,
\eeq
whose difference $\|x - x'\| \gg 2\eps$ is large, but where the difference $x-x'$ lies close to the null space of $A$ (which is nontrivial due to the condition $m<n$) so that $\| A(x - x')\| < \eps.$ In the absence of kernel awareness, we have
\beq
\|\Psi(Ax) - \Psi(Ax')\| \geq \|x - x'\| - 2\eps,
\eeq
which implies the instability of the reconstructor $\Psi$. Thus, any stable and accurate reconstruction procedure should be ``kernel aware". 
Unfortunately, most deep learning methods do not enforce kernel awareness during the learning procedure. Thus it is concluded in \cite{Got} that ``deep learning reconstruction procedure that over-performs in a certain sense must inevitably succumb to one or more of the main (instability) phenomena".
In the literature, designing deep neural networks to promote kernel awareness is an active research
area (see e.g., \cite{Lip1, Lip2}). However, these studies are  conducted for general deep learning problems. For severely ill-posed problems, e.g., EIT, they will inevitably lead to a loss of accuracy, in light of the philosophy in \cite{Got}. 

In this work, we investigate sufficient conditions on the training data in order to improve stability properties of the deep Calder\'on method \cite{Cen}, one postprocessing type deep learning-based techniques for EIT image reconstruction; see Section \ref{subsec-deep} for details about the method. To this end, we first rigorously establish the Lipschitz stability of the linearized inversion for a suitable admissible set of anisotropic conductivities, and then use the stability result to shed insights into the method. Note that the Lipschitz stability result is sufficient to guarantee the kernel awareness at least on the theoretical level \cite[p. 5]{Col}; see the comments after Theorem \ref{thm-main2d} for further discussions. Our result provides guideline for designing a suitable training dataset in order to ensure the stable recovery of the deep Calder\'{o}n method in the presence of anisotropies, provided that the neural network is well trained.

The rest of the paper is organized as follows. In Section \ref{sec-main}, we present the main stability result in the two-dimensional case and discuss the implication on the stability of the trained neural networks. Then in Section \ref{sec-pf2d}, we give the proof of the main result. In Section \ref{sec-pf3d}, we present an analogous result in the multi-dimensional case. In Section \ref{sec-num}, we present numerical experiments to complement the theoretical findings. Throughout, the notation $(\cdot,\cdot)$ and $|\cdot|$ denote the Euclidean inner product and norm, respectively.
 
%===========================%
\section{The main results}\label{sec-main} 
Consider the operator $\gamma \rightarrow \La_{\gamma}$ and we aim to use deep learning methods to find its (approximate) inverse. In this work, we address the local inversion problem near some fixed $\gamma_0$. Specifically, consider conductivities of the form
\beqq\label{eq-gamma}
\gamma = (1 + \delta) (I + A),
\eeqq
where $\delta \in C^1(\overline{\Omega})$ is a scalar function and $A = (A_{ij})_{i, j = 1}^2$ is a matrix-valued $C^1(\overline{\Omega})$ function. Throughout we assume that the anisotropy matrix $A$ is known. We investigate the recovery of the perturbation $\delta.$ Thus $\gamma$ can be viewed as perturbations of $\gamma_0 = I  + A$. This setting corresponds to the recovery of anisotropic conductivities in a conformal class, which has been studied in several works \cite{Lio, Mur, GaSi}. Note that the reconstruction of anisotropic EIT has also been investigated in the literature (see, e.g., \cite{HenkinSantacesaria:2010,HamiltonLassas:2014}).

%===========================%
\subsection{The deep Calder\'on method}\label{subsec-deep} 
We consider the deep Calder\'on method developed in \cite{Cen}. Originally, it was developed for the isotropic EIT problem; see \cite{SunZhongWang:2023,LiShinZhou:2025} for further its refinements. In this work, we apply the method to the anisotropic EIT problem and analyze its stability properties. Note that in the literature, there are several other deep learning methods based on a similar strategy, e.g., deep D-bar \cite{Ham} and the neural correction scheme  \cite{BCW}. 

The deep Calder\'on method consists of two steps. First, one applies Calder\'on method to obtain an initial reconstruction from $\La_\gamma.$ This step is essentially contained in Calder\'on's original paper \cite{Cal}, and was explored numerically in several works \cite{BikowskiMueller:2008,KnudsenMueller:2013,MullerMuller:2017,ShinMueller:2020,Mur}. Specifically, let $k$ and $\zeta$ be two vectors in $\mbr^2$ such that $|k| = |\zeta|$ and $(k, \zeta) = 0$. Consider the harmonic functions 
$u_1(x) = e^{\frac12({\rm i} k \cdot x +  \zeta \cdot x)}$ and $u_2(x) = e^{\frac12({\rm i} k\cdot x - \zeta \cdot x)}$,
where ${\rm i}$ is the imaginary unit.
Then   for $|k|< R$ and $\|\delta\|_{L^\infty(\Omega)}$ sufficiently small, there holds
\beq
\begin{gathered}
\int_{\p \Omega} u_1(x) (\La_\gamma - \La_{\gamma_0}) u_2(x)\d S 
 \simeq - \frac{|k|^2}{2} \int_{\Omega} \delta(x) e^{ \mathrm{i} k\cdot x} \d x.
\end{gathered}
\eeq  
In Section \ref{sec-pf2d}, we provide the details of the derivation. Thus, we can take 
\beqq\label{eq-F}
\widehat h(-k) = -\frac{2}{|k|^2}\int_{\p \Omega} u_1(x) (\La_\gamma - \La_{\gamma_0}) u_2(x)\d S 
\eeqq
as an approximation of $\widehat \delta(-k)$ in the Fourier domain. By the inverse Fourier transform, we get
\beqq\label{eq-apr}
\delta(x) \simeq \widetilde \delta(x) :=\frac{1}{(2\pi)^2}\int_{|k| < R} e^{\mathrm{i} k \cdot x} \widehat h(-k)\d k.
\eeqq 
Next, one post-processes the initial reconstruction $\widetilde \delta(x)$ using a convolutional neural network (CNN)  $f_\theta$ (more precisely, U-net \cite{Ron15}, which is widely used in medical imaging segmentation). Note that the second step of the deep Calder\'{o}n method can be regarded as a nonlinear map $f: \tilde \delta \rightarrow \delta$.

In the construction of the approximation $\widetilde \delta (x)$ in the first step, we point out that the following information has been ignored: (i) the high frequency Fourier modes of $\delta$; (ii) the nonlinearity of the map $\gamma\mapsto \La_\gamma$; (iii) the anisotropy of the conductivity $\gamma$. For (iii), we regard the anisotropy as a part of the nonlinear map $f$ and expect the anisotropy to be recovered in the second step, given the invertibility of $f$. One may also use the anisotropic Calder\'on method (see, e.g., \cite{Mur}) in the first step, but it still does not completely remove the anisotropy from the second step.

The role of the U-net in the second step is to learn an approximation $f_\theta$ of the nonlinear map $f$ from a given training dataset consisting of pairs of the Calder\'{o}n reconstruction and the corresponding ground truth. For CNNs, the universal approximation property is known (see e.g., \cite{Zho}), although it is often unclear whether the desired approximation property can be realized in practice. In this work, we are interested in finding conditions on the training dataset under which a well trained neural network $f_\theta$ has Lipschitz stability therefore being kernel-aware. It is important to point out that in the first step of the deep Calder\'on method, the high frequency Fourier information of $\delta$ is thrown away, so there is potential loss of information. However, if a Lipschitz stable $f_\theta$ exists (and can be numerically achieved), the map that takes $\widetilde \delta$ to $\chi_R\ast\delta$, where $\chi_R$ denotes the characteristic function of the set $\{k\in\mathbb{R}:|k|<R\}$, should be Lipschitz stable, and this further implies that the inversion from $\La_{\gamma}$ to $\chi_R\ast\delta$ is Lipshitz stable. Our result in Section \ref{subsec-stab} finds the condition on $\delta$ to make this happen. Thus, under the hypothesis that the universal approximation property of the CNN can be  achieved, the stability result provides a way to promote kernel awareness for the deep Calder\'on method by choosing proper training data, rather than changing the U-net architecture, e.g., by explicitly enforcing the Lipschitz constraint. 
 
%==========================%
\subsection{The stability result} \label{subsec-stab}
Let $\chi(\xi)$ be the characteristic function of the unit disc $B_1(0) = \{\xi\in \mbr^2: |\xi| < 1\}$ in $\mbr^2$. We define $\chi(D)$ to be a Fourier multiplier so that $\chi(D) f(x) =  (\chi \widehat f)^\vee(x)$ where $\wedge$ and $ \vee$ denote respectively the Fourier and inverse Fourier transforms on $\mbr^2$.  For $M >0, N>0$, we define
\beqq\label{eq-dn0}
\begin{split}
\mce_{M, N} =  \{&\delta\in C^2(\overline{\Omega}): \mathrm{supp}(\delta)\Subset \Omega, \|\delta\|_{L^\infty(\Omega)} < M\\ &\text{ and } \|\delta\|_{L^\infty(\Omega)} \leq  N\|\chi(D) \lap\delta\|_{L^2(\mbr^2)}\}.
\end{split}
\eeqq
Note that this is a subset (but not a subspace) of $C^2(\overline{\Omega})$. Then we have the following conditional stability result. 
\begin{theorem}\label{thm-main2d} 
Suppose $A\in C^1(\overline{\Omega})$ such that $\nabla\cdot(\gamma_0 \nabla)$ is elliptic, see \eqref{eq-ell}. Then for $M<1$ and  $N >0$ sufficiently small, we have for $\delta \in \mce_{M, N}$ that  
\beqq\label{eq-lipest0} 
\|\delta \|_{L^\infty(\Omega)} \leq  C \|\La_{\gamma} - \La_{\gamma_0}\|_*,
\eeqq
where $C>0$ is  independent of $\delta$, and $\|\bullet\|_*$ denotes the operator norm from $H^{1/2}(\p \Omega)$ is $H^{-1/2}(\p \Omega).$ 
\end{theorem}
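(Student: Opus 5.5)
The plan is to linearize the map $\delta\mapsto\La_\gamma$ at $\gamma_0$ and test the difference of Dirichlet-to-Neumann maps against Calder\'on-type exponential solutions. The coercivity hidden in the definition of $\mce_{M,N}$ then controls $\|\delta\|_{L^\infty}$.

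\textbf{Step 1: integral identity.} For solutions $u,v$ of $\nabla\cdot(\gamma\nabla u)=0$ and $\nabla\cdot(\gamma_0\nabla v)=0$, Alessandrini's identity gives
\[
\int_{\p\Omega} v\,(\La_\gamma-\La_{\gamma_0})u \d S=\int_\Omega \delta\,(\gamma_0\nabla u,\nabla v)\d x .
\]
Writing $u=u_0+w$, where $u_0$ solves the $\gamma_0$-equation with the same boundary data, the remainder satisfies $\nabla\cdot(\gamma\nabla w)=-\nabla\cdot(\delta\gamma_0\nabla u_0)$ with $w=0$ on $\p\Omega$. By ellipticity \eqref{eq-ell}, $\|\nabla w\|_{L^2}\le C\|\delta\|_{L^\infty}\|\nabla u_0\|_{L^2}$, so the quadratic term is $O(\|\delta\|_{L^\infty}^2)$ times norms of $u_0,v$. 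Since $\|\delta\|_{L^\infty}<M<1$, the factor $1+\delta$ stays positive and the constants are uniform.

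\textbf{Step 2: choice of test functions.} For $|k|\le 1$, take solutions of the $\gamma_0$-equation close to the harmonic exponentials $u_{1},u_{2}$ of Section \ref{subsec-deep}. The key point is that the relevant frequencies are bounded, so no large-parameter CGO construction is needed. Concretely, let $u_j^0$ be the $\gamma_0$-solution with boundary trace $u_j|_{\p\Omega}$. Because $\mathrm{supp}\,\delta\Subset\Omega$ and all quantities involve only $|k|\le1$, the norms of $u^0_j$ in $H^1(\Omega)$ and on $\p\Omega$ are bounded uniformly in $k$.

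The linear term then becomes $\int_\Omega\delta\,(\gamma_0\nabla u_1^0,\nabla u_2^0)$. This is not exactly $-\tfrac{|k|^2}{2}\widehat\delta(-k)$ because of $A$, which is the main obstacle. Rather than computing it exactly, I would integrate by parts to move derivatives onto $\delta$. Using $\nabla\cdot(\gamma_0\nabla u_j^0)=0$,
\[
(\gamma_0\nabla u_1^0,\nabla u_2^0)=\tfrac12\nabla\cdot\bigl(\gamma_0\nabla(u_1^0u_2^0)\bigr),
\]
so the linear term equals $\tfrac12\int_\Omega u_1^0u_2^0\,\nabla\cdot(\gamma_0\nabla\delta)\d x$. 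The hypothesis $A\in C^1$ is exactly what is needed for this step.

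\textbf{Step 3: lower bound from the class $\mce_{M,N}$.} We need a lower bound, for some $k$-average, of a quantity comparable to $\|\chi(D)\lap\delta\|_{L^2}$. If $A=0$, then $u_1u_2=e^{\mathrm i k\cdot x}$ and Step 2 gives $\tfrac12\widehat{\lap\delta}(-k)$; Plancherel over $|k|<1$ yields $\|\chi(D)\lap\delta\|_{L^2}$. For general $A$ I would treat $\gamma_0$ as fixed and argue by injectivity and compactness instead of explicit formulas. The map $\delta\mapsto\{\int_\Omega u_1^0u_2^0\nabla\cdot(\gamma_0\nabla\delta)\}_{|k|<1}$ is a bounded operator. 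If an explicit comparison fails, I would bound the error by $C\|A\|\,\|\delta\|_{L^\infty}$ and absorb it. That absorption needs smallness, which the theorem has available only through $N$. This is the step I expect to be hardest, and I would try the explicit computation with $A$ treated as a perturbation first.

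\textbf{Step 4: conclusion.} Bounding each tested quantity by $C\|\La_\gamma-\La_{\gamma_0}\|_*$ and integrating over $|k|<1$ gives
\[
\|\chi(D)\lap\delta\|_{L^2}\le C\|\La_\gamma-\La_{\gamma_0}\|_*+C\|\delta\|_{L^\infty}^2+(\text{anisotropy error}).
\]
The definition of $\mce_{M,N}$ gives $\|\delta\|_{L^\infty}\le N\|\chi(D)\lap\delta\|_{L^2}$, hence
\[
\|\delta\|_{L^\infty}\le CN\|\La_\gamma-\La_{\gamma_0}\|_*+CN(M+\epsilon_A)\|\delta\|_{L^\infty}.
\]
Choosing $N$ small enough that $CN(M+\epsilon_A)<\tfrac12$ absorbs the last term and yields \eqref{eq-lipest0}.
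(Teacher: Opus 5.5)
Your proposal is essentially the paper's proof. It uses the same exponentials $u_1,u_2$ replaced by their $\gamma_0$-harmonic extensions (the paper's $\tilde u_i$, Lemma \ref{lm-per1}), the same Green/Alessandrini identity with the $\gamma$-correction controlled by an energy estimate (Lemma \ref{lm-per2}), and the same absorption mechanism: every nonlinear and anisotropy error is bounded by $C\|\delta\|_{L^\infty(\Omega)}$ uniformly for $|k|<1$ and absorbed via $\|\delta\|_{L^\infty(\Omega)}\le N\|\chi(D)\lap\delta\|_{L^2(\mbr^2)}$ with $N$ small; your quadratic bound on the nonlinear remainder is sharper than the paper's linear one, but either suffices.

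The step you flag as hardest is done in the paper exactly by your fallback, with no injectivity/compactness argument and no integration by parts. One keeps the gradient form $\int_\Omega\delta\,(\gamma_0\nabla\tilde u_1,\nabla\tilde u_2)\d x$ and bounds $\int_\Omega\delta A\nabla\tilde u_1\cdot\nabla\tilde u_2\d x$ and the $\tilde u_i-u_i$ corrections by $C\|\delta\|_{L^\infty(\Omega)}$. Comparing after your integration by parts would instead naively give errors involving second derivatives of $\delta$, which $\mce_{M,N}$ does not control.
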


In Section \ref{sec-pf3d}, we shall prove an analogous result for higher dimensions.  We have a few remarks on Theorem \ref{thm-main2d}. First, note that the inversion of the map $\gamma\mapsto \La_\gamma$ is inherently nonlinear. The estimate \eqref{eq-lipest0} implies that the inversion of the operator is Lipschitz stable, which implies that the inversion of $\La_{\gamma}$ is close to Lipschitz stable for small $\delta$. Note that Lipschitz stability is sufficient to guarantee the kernel awareness required for stable and accurate neural networks \cite{Col, Got, Sca}. Thus we focus on the local inversion near some fixed conductivity. In contrast, for linear inverse problems, this is not an issue; see,  e.g., \cite{WaZh}. 

Second, we remark that conditions similar to \eqref{eq-dn0} have been used to obtain uniqueness results for the Calder\'on problem in the geometric setting \cite{MSS, UhWa}.  We have assumed $\delta$ to be compactly supported in the domain $\Omega$. From the proof, one can see that it is possible to remove the compactness assumption by introducing suitable cut-offs in \eqref{eq-dn0}. Also, one can replace $\chi(\xi)$ by any compactly supported cut-off function in \eqref{eq-dn0} and obtain the same result with different values of $C$ and $N$. In particular, this applies to the spectral cut-off $\chi_R$ in the first step of the deep Calder\'on method. The stability constant $C$ in \eqref{eq-lipest0} can be found more explicitly in the proof. For example, the constant is small for $N>0$ small. See also Remark \ref{rmk:constant} for more discussions. %In general, one would expect the constant to grow exponentially for $M, N$ large.  
 
Third, we consider using deep neural networks to find an approximation of the inverse operator of $\gamma \rightarrow \La_\gamma$ on the set $\mce_{M, N}$. Suppose that an accurate and stable neural network can be trained on $\mce_{M, N}$ (which is highly nontrivial due to the comnplex loss landscape). Then the reconstruction on $\mce_{M, N}$ will not encounter any ill-posedness. The ill-posedness will only appear when the neural network is applied to data outside $\mce_{M, N}$. From this perspective, deep learning methods for EIT could be potentially superior to traditional methods. 

Finally, it is worth mentioning that there is a large body of literature on the Lipschitz stability for the isotropic EIT problem; see, e.g., \cite{AlessandriniVessella:2005,BerettadeHoop:2013,ADGS,AAS,AlbertiSantacesaria:2019,Harrach:2019,AlbertiSantacesaria:2022,AspriBBeretta:2022,BerettaFrancini:2021,GardeHyvonen:2025} and the references therein. Most of these existing works require the parameter to belong to a finite-dimensional space or manifold with the notable exception of \cite{GardeHyvonen:2025, GaHy}. The works \cite{GardeHyvonen:2025,GaHy} establish the Lipschitz stability for an infinite-dimensional space of highly smooth conductivities satisfying some elliptic PDE. Also, the works \cite{AAS, AlbertiSantacesaria:2019,Harrach:2019} require only a finite number of measurements. For the result in Theorem \ref{thm-main2d}, we first note that the set $\mce_{M, N}$ in \eqref{eq-dn0} is not finite dimensional. The mechanism of the Lipschitz type stability is closer to that in \cite{UhWa, MSS} and \cite{GardeHyvonen:2025, GaHy}, which holds on compact sets. This can be seen more clearly from the proof in Section \ref{sec-pf2d} and the fact that there are compact subsets of $\mce_{M, N}$, e.g.,  
$\{\delta\in C^2(\overline{\Omega}): \mathrm{supp}(\delta)\Subset\Omega, \|\delta\|_{L^\infty(\Omega)} < M \text{ and } \|\delta\|_{C^3(\Omega)} \leq  N\|\chi(D) \lap\delta\|_{L^2(\mbr^2)}\}$.
Second, we note that the stability constants in the literature are usually implicit, while for the constant $C$ in the estimate \eqref{eq-lipest0}, we know some dependency on the data set as explained above. Numerically, a Lipshitz stability with a large stability constant is not effective. In Section \ref{sec:trainingset} below, we can actually use the dependency to find training data with a small stability constant. For the anisotropic EIT problem, the Lipschitz stability is less studied, see \cite{GaSi, UhWa, MSS,FosGabSin25}.

%%%%%%%%%%%%%%%%%%%%%%%%%%
\subsection{Implications on training sets}\label{sec:trainingset}
Theorem \ref{thm-main2d} allows us to understand the training of neural networks on specific training sets. First, we consider conductivities $\gamma = 1 + \delta$ where $\delta$ is a Gaussian that belongs to $\mce_{M, N}$. This type of data is used, e.g., in \cite[Section 3.4.1]{FaYi}.  Let $a>0$, $b>0$ and 
$\delta(x) = a e^{-\frac{b |x|^2}{2}}$.
If the domain $\Omega$ contains the origin then $\|\delta\|_{L^\infty(\Omega)} = a.$ Note that Gaussians are not compactly supported. Nonetheless, by taking $\Omega$ large, one can treat Gaussians as sufficiently good approximations of compactly supported functions. It is well-known that  \cite[Theorem 7.6.1]{Ho1}
$
\widehat \delta(\xi) = \frac{2\pi a}{b} e^{-\frac{|\xi|^2}{2b}}$.
Direct computation gives 
\begin{align*}
\|\chi(D)\lap \delta\|_{L^2(\mathbb{R}^2)}^2 & =(2\pi)^{-2} \int_{|\xi|\leq 1}  \frac{4\pi^2 a^2}{b^2}|\xi|^2 e^{-\frac{|\xi|^2}{b}}\d\xi \\
&=  (2\pi)^{-2}\int_{0}^1  \frac{4\pi^2 a^2}{b^2} 2\pi e^{-\frac{r^2}{b}}r^3\d r =  2\pi a^2 \int_{0}^{\frac{1}{\sqrt b}} e^{-s^2}s^3\d s. 
\end{align*}
Thus, for $\delta \in \mce_{M, N}$, there holds
\beqq\label{eq-N1}
N \geq \frac{\|\delta\|_{L^\infty(\Omega)}}{\|\chi(D)\lap \delta\|_{L^2(\mathbb{R}^2)}} = \frac{1}{(2\pi \int_{0}^{\frac{1}{\sqrt b}} e^{-s^2}s^3\d s)^\ha}:=\underline{N}. 
\eeqq
We plot the graph of the lower bound $\underline{N}$ of $N$ as a function of $b$ in Fig.\ \ref{fig:N-b}(a). 
Since Theorem \ref{thm-main2d} requires $N$ to be small, we should take  $b$ small. So, good stability can be achieved for Gaussians concentrated in regions that are not too small. In other words, high resolutions might be difficult to achieve in a stable way. The numerical results in \cite[Section 3.4.1]{FaYi} seem to agree well with the theory. In Example \ref{exam1} of Section \ref{sec-num}, we also perform relevant numerical experiments to validate the observation. 

\begin{figure}[t]
\centering
\begin{tabular}{cc}
\includegraphics[scale=0.5]{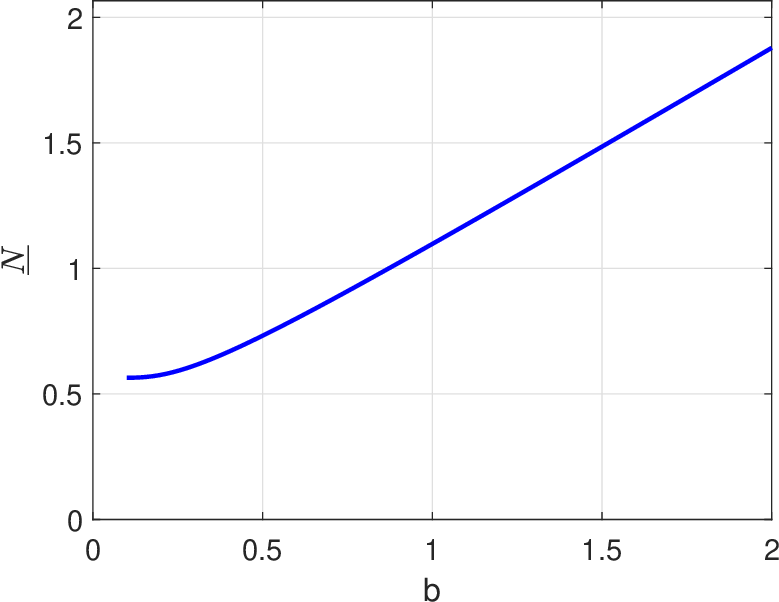}  &  \includegraphics[scale=0.5]{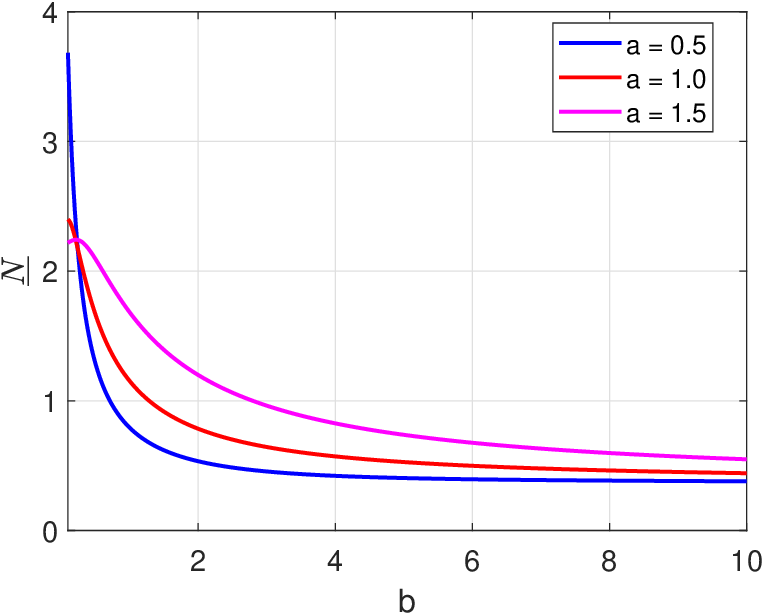}\\
 (a) Gaussian bump  & (b) characteristic function
\end{tabular}
\caption{The relation between the lower bound $\underline{N}$ of $N$, and $b$ for the perturbation by a Gaussian bump (given by \eqref{eq-N1}) and perturbation by a characteristic function (given by \eqref{eq-N2}) with various values of $a$.\label{fig:N-b} }
\end{figure}

Second, we consider the popular choice of piecewise constant functions as training data used in, e.g.,\ \cite{Cen}. Since Theorem \ref{thm-main2d} applies to $C^2$ functions, we take the convolution with Gaussians as approximations. Let $f_a(x)$ be the characteristic function of $B_a(0)$ in $\mbr^2$ for $a>0$. For $b>0$, let $g_b(x) = \frac{1}{2\pi b}e^{-\frac{|x|^2}{2b}}$ be a Gaussian. Note that 
$g_b(x) $ converges to the Dirac delta function on $\mbr^2$ supported at $x = 0$ in the sense of distributions as $b$ tends to zero.   
We set $\delta = g_b \ast f_a$. Then $\delta$ is smooth. Note that both $g_b$ and $f_a$ are non-negative. Due to symmetry, we have
\beq
\begin{gathered}
\|\delta\|_{L^\infty(\Omega)}= \delta(0) = \int_{|x|\leq a}  \frac{1}{2\pi b}e^{-\frac{|x|^2}{2b}} \d x
  =  2(1 - e^{-\frac{a^2}{2b}}). 
\end{gathered}
\eeq
Next, there holds
$$\widehat f_a(\xi) = 2\pi a \frac{J_1(\pi a |\xi|)}{|\xi|},$$
where $J_1$ is the Bessel function of the first kind.  We know $\widehat g_b(\xi) = e^{-\frac{b|\xi|^2}{2}}$. Then we derive 
\beq
\begin{split}
\|\chi(D)\lap \delta\|_{L^2(\mathbb{R}^2)}^2  
& = (2\pi)^{-2} \int_{|\xi|\leq 1}  |\xi|^2 e^{-b|\xi|^2} a^2 |J_1(\pi a |\xi|)|^2 |\xi|^{-2} \d\xi \\
 &  =2\pi a^2  \int_0^1   e^{-b r^2} |J_1(\pi a r)|^2  r\d r  
   = 2\pi \int_0^a   e^{-\frac{b s^2}{a^2}} |J_1(\pi  s)|^2  s\d s. 
\end{split}
\eeq
Therefore, we get 
\beqq\label{eq-N2}
N \geq \frac{\|\delta\|_{L^\infty(\Omega)}}{\|\chi(D)\lap \delta\|_{L^2(\mathbb{R}^2)}} = \frac{ 2(1 - e^{-\frac{a^2}{2b}})}{(2 \pi \int_0^a   e^{-\frac{b s^2}{a^2}} |J_1(\pi  s)|^2  s\d s)^\ha }:=\underline{N}. 
\eeqq

The estimate \eqref{eq-N2} implies that when $\delta$ belongs to $\mce_{M, N}$ in Theorem \ref{thm-main2d} for $N$ relatively small. 
For fixed $a$, we plot the graph of the lower bound $\underline{N}$ of $N$ as a function of $b$ in Fig.\ \ref{fig:N-b}(b). We observe that if $a$ is small, one can choose $b$ relatively small in order to keep $N$ small. However, if $a$ is large, then one needs to take $b$ large in order to make $N$ small. This suggests that a stable network can be achieved better for small objects. It is somewhat interesting that even for traditional reconstruction methods based on optimization, recent numerical study in \cite{APP} also indicates that the reconstruction algorithm works better for small objects. We perform related numerical experiments in Example \ref{exam2} of Section \ref{sec-num}. 

% 
 %%%%%%%%%%%%%%%%%%
\section{Proof of Theorem \ref{thm-main2d}}\label{sec-pf2d}
Our proof is based on Calder\'on's original approach in \cite{Cal}. Let $k, \zeta$ be vectors in $\mbr^2$  such that $|k| = |\zeta|$ and $(k, \zeta) = 0$. Consider the harmonic functions
$u_1(x) = e^{({\rm i} k \cdot x +  \zeta \cdot x)/2}$ and $u_2(x) = e^{({\rm i} k\cdot x  -\zeta \cdot x)/2}$.
We construct  solutions of problem \eqref{eq-eit} which are perturbations of $u_i.$ Below we use $C$ as a generic constant that can change line by line.  
\begin{lemma}\label{lm-per1}
Suppose $A\in C^1(\overline{\Omega})$ and that $\nabla \cdot((I + A) \nabla)$ is strictly elliptic in the sense that there is a constant $c>0$ such that 
\beqq\label{eq-ell}
\sum_{i, j = 1}^2  (I_{ij} + A_{ij}(x)) \xi_i\xi_j\geq c|\xi|^2
\eeqq
for all $x\in \Omega$ and $\xi\in \mbr^2$. Then for $i = 1, 2$, there are unique solutions $\tilde u_i\in C^2(\overline\Omega)$ of 
\beqq\label{eq-tu}
\left\{\begin{aligned}
\nabla \cdot ((I + A) \nabla \tilde u_i) &= 0,\quad \text{in } \Omega, \\
\tilde u_i &= u_i,\quad \text{on } \p \Omega.
\end{aligned}\right.
\eeqq 
Moreover, we can write $\tilde u_i = u_i + v_i$ such that for some $C$ depending on $A$ and $\Omega$,
\beqq\label{eq-vest-1}
\|v_i\|_{H^1(\Omega)} \leq C \|A\|_{L^\infty({\Omega})}  |k| e^{C |k|}.  
\eeqq
\end{lemma}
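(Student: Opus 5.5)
Set $v_i := \tilde u_i - u_i$ and read off its equation from \eqref{eq-tu}. The functions $u_i$ are harmonic, since $\lap u_i = \frac14(-|k|^2 + |\zeta|^2 \pm 2{\rm i}\, k\cdot\zeta)u_i = 0$ because $|k|=|\zeta|$ and $(k,\zeta)=0$. Therefore $\nabla\cdot((I+A)\nabla u_i) = \nabla\cdot(A\nabla u_i)$, and $v_i$ must solve
\begin{equation*}
\nabla\cdot((I+A)\nabla v_i) = -\nabla\cdot(A\nabla u_i) \quad \text{in } \Omega, \qquad v_i = 0 \quad \text{on } \p\Omega.
\end{equation*}
I will solve this problem by Lax--Milgram in $H^1_0(\Omega)$ and then set $\tilde u_i := u_i + v_i$. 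Uniqueness of $\tilde u_i$ follows from uniqueness of the homogeneous Dirichlet problem.

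The weak form is: find $v_i\in H^1_0(\Omega)$ such that
\begin{equation*}
\int_\Omega (I+A)\nabla v_i\cdot\nabla \varphi \d x = -\int_\Omega A\nabla u_i\cdot\nabla\varphi \d x \qquad \text{for all } \varphi\in H^1_0(\Omega).
\end{equation*}
The bilinear form is bounded because $A\in L^\infty$. It is coercive on $H^1_0$ by \eqref{eq-ell} and Poincar\'e's inequality. For complex-valued $u_i$, one can either treat real and imaginary parts separately or use the sesquilinear version; since $(I+A)$ is real, the real part of $\int (I+A)\nabla v\cdot\overline{\nabla v}$ is still at least $c\|\nabla v\|_{L^2}^2$. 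The right-hand side is a functional on $H^1_0$ with norm at most $\|A\|_{L^\infty}\|\nabla u_i\|_{L^2(\Omega)}$. Testing with $\varphi = v_i$ and applying Cauchy--Schwarz then gives
\begin{equation*}
\|v_i\|_{H^1(\Omega)} \le C\|\nabla v_i\|_{L^2} \le C\|A\|_{L^\infty(\Omega)}\|\nabla u_i\|_{L^2(\Omega)}.
\end{equation*}
It remains to bound $\nabla u_i$. We have $\nabla u_i = \frac12({\rm i}k\pm\zeta)u_i$, so $|\nabla u_i|\le \frac{\sqrt2}{2}|k|\,e^{|\zeta||x|/2}$. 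Since $\Omega$ is bounded, $\|\nabla u_i\|_{L^2(\Omega)}\le C|k|e^{C|k|}$ with $C$ depending on $\operatorname{diam}\Omega$ and $|\Omega|$, which yields \eqref{eq-vest-1}.

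The main obstacle is the claimed regularity $\tilde u_i\in C^2(\overline\Omega)$. With $A$ only $C^1$, I would first use Schauder-type interior and boundary estimates: the equation is in divergence form with $C^{0,\alpha}$, in fact Lipschitz, coefficients, and the boundary data $u_i$ are smooth on a smooth $\p\Omega$. These give $\tilde u_i\in C^{1,\alpha}(\overline\Omega)$, and $W^{2,p}$ estimates give $\tilde u_i\in W^{2,p}$ for all $p<\infty$. Upgrading to $C^2$ genuinely needs $A\in C^{1,\alpha}$. If the paper intends $C^2$ literally, I would note that this requires slightly more regularity of $A$, or read the statement with $C^{1,\alpha}\cap W^{2,p}$ regularity. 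This suffices for the later use, where only $H^1$ bounds and boundary traces of the conormal derivative appear. The $H^1$ estimate itself does not depend on this regularity point.
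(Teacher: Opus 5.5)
Your proof is correct and follows essentially the same route as the paper: subtract the harmonic function $u_i$, solve for $v_i\in H^1_0(\Omega)$ with source $-\nabla\cdot(A\nabla u_i)$, and combine the $H^{-1}\to H^1_0$ energy estimate with $\|\nabla u_i\|_{L^2(\Omega)}\le C|k|e^{C|k|}$. The paper simply cites Gilbarg--Trudinger for the energy estimate, whereas you spell out the Lax--Milgram and coercivity step (strictly, your exponential constant depends on $\sup_{x\in\Omega}|x|$ rather than on the diameter of $\Omega$, which is harmless), and your caveat about $C^2(\overline\Omega)$ is also well founded: the Schauder result the paper invokes (Gilbarg--Trudinger, Theorem 6.14) needs H\"older continuous first-order coefficients $\sum_i \partial_i A_{ij}$, i.e.\ effectively $A\in C^{1,\alpha}$, while for $A\in C^1$ the general theory only guarantees $C^{1,\alpha}\cap W^{2,p}$ regularity, which is enough for the later $H^1$ and Green's-identity arguments.
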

\begin{proof}
The uniqueness and existence of $\tilde u_i\in C^2(\overline\Omega)$ follow from the standard elliptic PDE theory (see, e.g., \cite[Theorem 6.14]{GiTr}). For the estimate \eqref{eq-vest-1}, note that $u_i$ satisfies the equation $\nabla\cdot \nabla u_i = 0$ on $\mbr^2$ by the choice of $k, \zeta$. Thus, using \eqref{eq-tu}, $v_i$ are solutions of 
\beqq\label{eq-v-1}
\left\{\begin{aligned}
\nabla \cdot ((I+A)\nabla v_i) &= - \nabla\cdot ( A \nabla u_i),\quad  \text{in } \Omega, \\
v_i &= 0,\quad \text{on } \p \Omega. 
\end{aligned}\right.
\eeqq 
By the standard elliptic estimate (see e.g. \cite[Theorem 8.12]{GiTr}), we deduce from \eqref{eq-v-1} that 
\beq
\|v_i\|_{H^1(\Omega)}\leq C \|\nabla\cdot ( A \nabla u_i)\|_{H^{-1}(\Omega)} \leq C \|A\|_{L^\infty(\Omega)} \|u_i\|_{H^1(\Omega)} \leq C \|A\|_{L^\infty(\Omega)} |k|e^{C |k|}. 
\eeq
This completes the proof of the lemma. 
\end{proof}
\begin{remark}
By repeating the argument of Lemma \ref{lm-per1}, we can also derive 
 \begin{equation*} 
\|v_i\|_{H^2(\Omega)} \leq C \|A\|_{C^1(\overline{\Omega})}  |k|^2 e^{C |k|}.  
\end{equation*}
\end{remark}

\begin{lemma}\label{lm-per2}
Suppose that $\gamma$ is given by \eqref{eq-gamma} where $A\in C^1(\overline{\Omega})$ satisfies \eqref{eq-ell} and $\delta \in C^1(\overline{\Omega})$  is compactly supported in $\Omega$ such that $\|\delta\|_{L^\infty(\Omega)}< M< 1$. Then there are unique solutions $w_i\in C^2(\overline\Omega)$, $i = 1, 2$, of  
\beqq\label{eq-w}
\left\{\begin{aligned}
\nabla \cdot (\gamma \nabla w_i) &= 0,\quad \text{in } \Omega, \\
w_i &=  u_i,\quad \text{on } \p \Omega
\end{aligned}\right.
\eeqq  
of the form  $w_i = \tilde u_i + \tilde v_i$ such that for some $C$ depending on $\Omega$, $A$ and $M$, 
\beqq\label{eq-vest}
\|\tilde v_i\|_{H^1(\Omega)} \leq C \|\delta\|_{L^\infty(\Omega)}  |k| e^{C |k|}.
\eeqq 
\end{lemma}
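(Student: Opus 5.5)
Following the proof of Lemma \ref{lm-per1}, we write $w_i = \tilde u_i + \tilde v_i$, where $\tilde u_i$ is the solution from Lemma \ref{lm-per1}, and derive an elliptic problem for the correction $\tilde v_i$ with homogeneous Dirichlet data.

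\emph{Existence and uniqueness.} Since $\|\delta\|_{L^\infty(\Omega)}<M<1$, we have $1+\delta\ge 1-M>0$. Together with \eqref{eq-ell}, this gives
\beq
(\gamma(x)\xi,\xi)\ge (1-M)c|\xi|^2 .
\eeq
Hence $\nabla\cdot(\gamma\nabla)$ is uniformly elliptic with $C^1$ coefficients, with ellipticity constant depending only on $c$ and $M$. Existence and uniqueness of $w_i\in C^2(\overline\Omega)$ then follow from \cite[Theorem 6.14]{GiTr}, exactly as in Lemma \ref{lm-per1}. (If the coefficient regularity is only $C^1$, we can first work in $H^1$ by Lax--Milgram and then upgrade the regularity.)

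\emph{Equation for the correction.} Using $\nabla\cdot((I+A)\nabla\tilde u_i)=0$ and $\gamma=(1+\delta)(I+A)$, we compute
\beq
\nabla\cdot(\gamma\nabla\tilde u_i)=\nabla\cdot(\delta(I+A)\nabla\tilde u_i).
\eeq
Therefore $\tilde v_i=w_i-\tilde u_i$ solves
\beq
\nabla\cdot(\gamma\nabla\tilde v_i)=-\nabla\cdot\big(\delta(I+A)\nabla\tilde u_i\big)\ \text{in }\Omega,\qquad \tilde v_i=0\ \text{on }\p\Omega .
\eeq

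\emph{Energy estimate.} We apply the standard $H^1_0$ estimate \cite[Theorem 8.12]{GiTr}, or simply test with $\tilde v_i$ and combine Poincar\'e's inequality with the uniform ellipticity above. This gives
\beq
\|\tilde v_i\|_{H^1(\Omega)}\le C\|\delta(I+A)\nabla\tilde u_i\|_{L^2(\Omega)}\le C\|\delta\|_{L^\infty(\Omega)}\big(1+\|A\|_{L^\infty(\Omega)}\big)\|\tilde u_i\|_{H^1(\Omega)} .
\eeq
Here $C$ depends only on $\Omega$, $c$, $M$ and $\|A\|_{L^\infty(\Omega)}$. It does not depend on $\delta$ beyond $M$, because the lower ellipticity bound $(1-M)c$ and the upper bound $(1+M)(1+\|A\|_{L^\infty(\Omega)})$ are uniform.

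\emph{Bounding $\tilde u_i$.} Finally we bound $\|\tilde u_i\|_{H^1(\Omega)}\le\|u_i\|_{H^1(\Omega)}+\|v_i\|_{H^1(\Omega)}$. On the bounded domain $\Omega$ we have $|u_i|\le e^{|\zeta||x|/2}$ and $|\nabla u_i|\le C|k|e^{C|k|}$, since $|\zeta|=|k|$. Combined with \eqref{eq-vest-1}, this yields
\beq
\|\tilde u_i\|_{H^1(\Omega)}\le C(1+|k|)e^{C|k|}.
\eeq
Inserting this into the energy estimate gives \eqref{eq-vest}. For small $|k|$, the factor $1+|k|$ is absorbed by enlarging $C$ in the exponent, or the bound is read as $C\|\delta\|_{L^\infty(\Omega)}(1+|k|)e^{C|k|}$.

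\emph{Main obstacle.} The argument is routine; the only care needed is keeping the constants uniform in $\delta\in\mce_{M,N}$. This holds because $M<1$ fixes the ellipticity bounds, and $\delta$ appears only through $\|\delta\|_{L^\infty(\Omega)}$ on the right-hand side. The compact support of $\delta$ is not needed here.
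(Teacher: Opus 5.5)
Your argument follows the paper's proof step for step. You use the same decomposition $w_i=\tilde u_i+\tilde v_i$ and the same homogeneous Dirichlet problem $\nabla\cdot(\gamma\nabla\tilde v_i)=-\nabla\cdot(\delta\gamma_0\nabla\tilde u_i)$. You derive the same energy estimate by testing with $\tilde v_i$, using the ellipticity bound $(1-M)c$, and finish with Poincar\'e and Lemma \ref{lm-per1}. The one real defect is your last step. You bound $\|\delta(I+A)\nabla\tilde u_i\|_{L^2(\Omega)}$ by $\|\tilde u_i\|_{H^1(\Omega)}$, and this loses the factor $|k|$. Since $u_i\to 1$ as $|k|\to0$, $\|u_i\|_{L^2(\Omega)}\to|\Omega|^{1/2}$, so you only obtain $C\|\delta\|_{L^\infty(\Omega)}(1+|k|)e^{C|k|}$. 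Your claim that $1+|k|$ can be ``absorbed by enlarging $C$ in the exponent'' is false near $k=0$: for any $C'$, $(1+|k|)e^{C|k|}/(|k|e^{C'|k|})\to\infty$ as $|k|\to0$. As written, you prove a weaker inequality than \eqref{eq-vest}.

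The fix is what the paper does: keep only the gradient. The source term involves $\tilde u_i$ only through $\nabla\tilde u_i$, so $\|\nabla\tilde v_i\|_{L^2(\Omega)}\le C\|\delta\|_{L^\infty(\Omega)}\|\nabla\tilde u_i\|_{L^2(\Omega)}$. Poincar\'e for $\tilde v_i\in H^1_0(\Omega)$ then controls the full $H^1$ norm. Next, $\nabla u_i=\frac12(\mathrm{i}k\pm\zeta)u_i$ with $|\mathrm{i}k\pm\zeta|=\sqrt2|k|$, so $\|\nabla u_i\|_{L^2(\Omega)}\le C|k|e^{C|k|}$. Also \eqref{eq-vest-1} gives $\|\nabla v_i\|_{L^2(\Omega)}\le C\|A\|_{L^\infty(\Omega)}|k|e^{C|k|}$. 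Hence $\|\nabla\tilde u_i\|_{L^2(\Omega)}\le C|k|e^{C|k|}$, which yields \eqref{eq-vest} exactly. For the application in Theorem \ref{thm-main2d}, where only $|k|<1$ matters and the $k$-factors are simply bounded, your weaker bound would suffice, but it is not the lemma as stated.

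A minor aside on regularity. Your parenthetical about upgrading the Lax--Milgram solution to $C^2$ does not go through with merely $C^1$ coefficients: you get $W^{2,p}$ for all $p<\infty$, not $C^2$. Likewise, \cite[Theorem 6.14]{GiTr} needs H\"{o}lder continuous first-order coefficients, i.e.\ $\gamma\in C^{1,\alpha}$. The paper shares this looseness, and it is irrelevant to the estimate, which only uses the $H^1$ solution.
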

\bpf
The existence and uniqueness of $w_i\in C^2(\overline\Omega)$ follow from \cite[Theorem 6.14]{GiTr}. By \eqref{eq-w} and \eqref{eq-tu}, $\tilde v_i$ are solutions of 
\beqq\label{eq-v}
\left\{\begin{aligned}
\nabla \cdot (\gamma \nabla \tilde v_i) &= -\nabla\cdot (\delta \gamma_0 \nabla \tilde u_i),\quad \text{in } \Omega, \\
\tilde v_i &= 0,\quad \text{on } \p \Omega. 
\end{aligned}\right.
\eeqq 
%We write the equation as 
%\beq
%\nabla \cdot (\gamma_0 \nabla \tilde v_i) = -\nabla\cdot (\delta \gamma_0 \nabla \tilde u_i) - \nabla \cdot (\delta \gamma_0 \nabla \tilde v_i)
%\eeq
%For $\|\delta\|_{C^1}< M < 1$, we know that $\nabla\cdot (\gamma \cdot\nabla)$ is elliptic in view of \eqref{eq-ell}. Thus, by the standard elliptic estimate, we deduce  that 
By multiplying the equation by $\tilde v_i$, integrating over the domain $\Omega$ and integrating by parts, we get 
\beq
\| \nabla \tilde v_i\|^2_{L^2(\Omega)}\leq C \|\delta\|_{L^\infty(\Omega)} \|\nabla \tilde u_i\|_{L^2(\Omega)}\|\nabla \tilde v_i\|_{L^2(\Omega)}, 
\eeq
where we have used the condition $\|\delta\|_{L^\infty(\Omega)}< M< 1$, and $C$ depends on $M.$ Then we obtain
\beq
\|\nabla \tilde v_i\|_{L^2(\Omega)}\leq  C \|\delta\|_{L^\infty(\Omega)} \|\nabla \tilde u_i\|_{L^2(\Omega)}. 
\eeq
Using Lemma \ref{lm-per1} and the Poincar\'{e} inequality, we complete the proof of the lemma. 
\epf

Now we can state the proof of Theorem \ref{thm-main2d}.
\begin{proof}
Using Lemmas \ref{lm-per1} and \ref{lm-per2}, we can write $w_i = \tilde u_i + \tilde v_i.$ Using Green's identity for problems \eqref{eq-w} and \eqref{eq-tu}, we get 
\begin{align*}
\int_{\p \Omega} w_1(x) \La_\gamma w_2(x)\d S &= \int_\Omega \gamma(x) \nabla w_1(x) \cdot \nabla w_2(x) \d x,  \\
\int_{\p \Omega} \tilde u_1(x) \La_{\gamma_0} \tilde u_2(x)\d S &= \int_\Omega \gamma_0(x) \nabla \tilde u_1(x) \cdot \nabla \tilde u_2(x) \d x. 
\end{align*}
Now by using the relations $\gamma=(1+\delta)\gamma_0$ and $\gamma_0=I+A$, and substituting the choices of $\tilde u_1$ and $\tilde u_2$, we have
\begin{comment}
\begin{align*}
& \int_{\p \Omega} \tilde u_1(x) (\La_\gamma - \La_{\gamma_0}) \tilde u_2(x)\d S \nonumber \\
 = & - \int_{\p\Omega} \tilde v_1(x) \La_\gamma w_2(x) \d S - \int_{\p\Omega} w_1(x) \La_\gamma \tilde v_2(x) \d S + \int_{\p\Omega} \tilde v_1(x) \La_\gamma \tilde v_2(x) \d S\nonumber\\
& +  \int_\Omega \delta(x)\gamma_0(x) \nabla \tilde u_1(x) \cdot \nabla \tilde u_2(x) \d x   +  \int_\Omega \gamma(x)  \nabla \tilde v_1(x) \cdot \nabla \tilde u_2(x)  \d x \nonumber \\
& + \int_\Omega \gamma(x) \nabla \tilde u_1(x) \cdot \nabla \tilde v_2(x) \d x  + \int_\Omega \gamma(x) \nabla \tilde v_1(x) \cdot \nabla \tilde v_2(x) \d x
\end{align*}
Now the relation  implies
$$\int_\Omega \delta(x)\gamma_0(x)\nabla \tilde u_1\cdot\nabla \tilde u_2\d x = \int_\Omega \delta(x)\nabla \tilde u_1\cdot\nabla \tilde u_2\d x + \int_\Omega \delta(x)A(x)\nabla \tilde u_1\cdot\nabla \tilde u_2\d x.$$
Consequently,
\end{comment}
\begin{align}
 \int_{\p \Omega} \tilde u_1(x) (\La_\gamma - \La_{\gamma_0}) \tilde u_2(x)\d S= \int_\Omega \delta(x) \nabla \tilde u_1(x) \cdot \nabla \tilde u_2(x) \d x  + E_1, \label{eq-id0} 
 \end{align}
 with the error term $E_1$ given by
 \begin{align*}
 E_1=& - \int_{\p\Omega} \tilde v_1(x) \La_\gamma w_2(x) \d S - \int_{\p\Omega} \tilde{u}_1(x) \La_\gamma \tilde v_2(x) \d S 
 +  \int_\Omega \delta(x)A(x) \nabla \tilde u_1(x) \cdot \nabla \tilde u_2(x) \d x\\
 &+  \int_\Omega \gamma(x)  \nabla \tilde v_1(x) \cdot \nabla \tilde u_2(x)  \d x  + \int_\Omega \gamma(x) \nabla \tilde u_1(x) \cdot \nabla \tilde v_2(x) \d x  + \int_\Omega \gamma(x) \nabla \tilde v_1(x) \cdot \nabla \tilde v_2(x) \d x.
\end{align*}
Meanwhile, the choices of $u_1$ and $u_2$ imply the identity $\int_\Omega \delta(x) \nabla u_1(x) \cdot \nabla  u_2(x) \d x = -\frac{|k|^2}{2}\widehat{\delta}(-k)$. Then with the relation $\tilde u_i=u_i+v_i$, we obtain 
\begin{align*}
 \int_\Omega \delta(x) \nabla \tilde u_1(x) \cdot \nabla \tilde u_2(x) \d x  
 = &  -\frac{|k|^2}{2} \widehat{\delta}(-k) + E_2, 
\end{align*}
with the error term $E_2$ given by
\begin{align*}
    E_2=  \int_\Omega \delta(x) \nabla u_1(x) \cdot \nabla  v_2(x) \d x
 + \int_\Omega \delta(x) \nabla   v_1(x) \cdot \nabla  u_2(x) \d x +  \int_\Omega \delta(x) \nabla   v_1(x) \cdot \nabla   v_2(x) \d x.
 \end{align*}
Next we bound the two error terms $E_1$ and $E_2$ separately. Using Lemmas \ref{lm-per1} and \ref{lm-per2} and the trace theorem, we can estimate the error term $E_1$ by  
\begin{align*}
|E_1| \leq &\|\tilde v_1\|_{H^{\frac12}(\p\Omega)} \|\La_\gamma w_2\|_{H^{-\frac12}(\p\Omega)} + \|\tilde u_1\|_{H^{\frac12}(\p\Omega)} \|\La_\gamma \tilde v_2\|_{H^{-\frac12}(\p\Omega)} + C\|\delta\|_{L^\infty(\Omega)}\|\tilde u_1\|_{H^1(\Omega)}\|\tilde u_2\|_{H^1(\Omega)}\\
 & + C \|\tilde v_1\|_{H^1(\Omega)} \|\tilde u_2\|_{H^1(\Omega)} + C\|\tilde u_1\|_{H^1(\Omega)} \|\tilde v_2\|_{H^1(\Omega)} + C\|\tilde v_1\|_{H^1(\Omega)} \|\tilde v_2\|_{H^1(\Omega)}\\
 \leq &  C\|\delta\|_{L^\infty(\Omega)}\|\tilde u_1\|_{H^1(\Omega)}\|\tilde u_2\|_{H^1(\Omega)} +  C( \|\tilde v_1\|_{H^1(\Omega)} \|w_2\|_{H^1(\Omega)} + \|\tilde u_1\|_{H^1(\Omega)} \|\tilde v_2\|_{H^1(\Omega)} \\
& + \|\tilde v_1\|_{H^1(\Omega)} \|\tilde u_2\|_{H^1(\Omega)} + \|\tilde u_1\|_{H^1(\Omega)} \|\tilde v_2\|_{H^1(\Omega)} + \|\tilde v_1\|_{H^1(\Omega)} \|\tilde v_2\|_{H^1(\Omega)})\\
\leq &C\|\delta\|_{L^\infty(\Omega)} |k| e^{C |k|}. 
\end{align*}
Similarly, by Lemma \ref{lm-per1}, we can bound the error term $E_2$ by  
\beq
\begin{split}
|E_2| &\leq C \|\delta\|_{L^\infty(\Omega)} (\|u_1\|_{H^1(\Omega)} \|v_2\|_{H^1(\Omega)} + \|v_1\|_{H^1(\Omega)} \|u_2\|_{H^1(\Omega)} + \|v_1\|_{H^1(\Omega)} \|v_2\|_{H^1(\Omega)})\\
&\leq C\|\delta\|_{L^\infty(\Omega)} |k| e^{C |k|}. 
\end{split}
\eeq
Now we can deduce from the identity \eqref{eq-id0} that
\begin{align}
\frac{|k|^2}{2}|\hat\delta(-k)| \leq &\|\tilde u_1\|_{H^\ha(\p\Omega)} \|\tilde u_2\|_{H^\ha(\p\Omega)} \|\La_{\gamma} - \La_{\gamma_0}\|_* +  C\|\delta\|_{L^\infty(\Omega)}    |k| e^{C |k|}\nonumber\\
\leq& C|k|e^{C |k|} \|\La_\gamma - \La_{\gamma_0}\|_* + C  \|\delta\|_{L^\infty(\Omega)}   |k| e^{C |k|}.\label{eq-temp-1}
\end{align}
Then by multiplying both sides by $|\chi(k)|$ and noting that the factors involving $k$ are uniformly bounded on the set $B_1(0)$, we deduce
\begin{align*} 
\int_{\mbr^2}|\chi(k)| |k|^4|\hat\delta(k)|^2 \d k \leq  C_0 \|\La_\gamma - \La_{\gamma_0}\|_*^2 +   C_1 \|\delta\|_{L^\infty(\Omega)}^2.
\end{align*}
From the assumption $\|\delta\|_{L^\infty(\Omega)} \leq N \|\chi(D)\lap \delta \|_{L^2(\mbr^2)}$ and  Plancherel's theorem, we get 
\beq
\|\chi(D) \lap \delta \|_{L^2(\mbr^2)}^2 \leq  C_0  \|\La_\gamma - \La_{\gamma_0}\|_*^2 + C_1 N^2 \|\chi(D)\lap \delta\|_{L^2(\mbr^2)}^2, 
\eeq 
for some $C_0$, $C_1$ depending on $\chi$. Let $N >0$ be small enough so that $C_1 N^2  < 1/2$. Then we get 
\beq
\|\chi(D)\lap \delta \|_{L^2(\mbr^2)}^2 \leq 2C_0 \|\La_\gamma - \La_{\gamma_0}\|_*^2,
\eeq 
which implies 
$\|\delta \|_{L^\infty(\Omega)}^2 \leq  2 C_0 N^2   \|\La_\gamma - \La_{\gamma_0}\|_*^2.$ 
This completes the proof of the theorem.
\end{proof}

\begin{remark} \label{rmk:constant}
The proof indicates that the constants $C_0$ and $C_1$ depend on the support of the characteristic function $\chi$. More precisely, for $R>0$, let $\chi_R$ be the characteristic function of $B_R(0)$. Then it follows from the estimate \eqref{eq-temp-1} that the constants $C_0$ and $ C_1$ grow exponentially for $R$ large. Thus $N< (\frac{1}{2C_1})^\ha$ indicates that $N$ should be exponentially small for large $R$. This suggests that in the first step of the deep Calder\'on method in Section \ref{subsec-deep}, it is preferable to take the truncation radius $R$ relatively small, which is indeed commonly adopted in practice. 
\end{remark}

\begin{remark}
Conditional stability estimates are useful for analyzing the regularizing property of numerical schemes for inverse problems \cite{CenJinQuanZhou:2026}.  The regularizing property of the closely related D-bar method has been investigated in \cite{KnudsenLassas:2009,KnudsenRasmussen:2022}. It is of interest to use Theorem \ref{thm-main2d} to establish the regularizing property of the Calder\'on method and D-bar method.
\end{remark}

 %%%%%%%%%%%%%%%%%%
\section{The stability result in higher dimensions}\label{sec-pf3d}
We prove an analogous result of Theorem \ref{thm-main2d} for dimensions $n\geq 3$. Let $\Omega$ be an open bounded domain of $\mbr^n$ ($n\geq3$) with a smooth boundary $\partial\Omega$. Consider conductivities of the form $\gamma = (1 + \delta)(I + A)$ in \eqref{eq-gamma} and let $\gamma_0 = I + A.$ Also, we assume that $A\in C^1(\overline\Omega)$, and that $\delta\in C^1(\overline\Omega)$ is compactly supported in $\Omega$. 

For conductivities in a fixed conformal class, it is convenient to transform the conductivity equation to a Schr\"odinger equation. Thus, we define a smooth Riemannian metric $g$ on the domain $\Omega$ by 
\beq
g = (\det \gamma)^{\frac{1}{n-2}} \gamma^{-1},
\eeq
with $\gamma^{-1}$ being the inverse of $\gamma$ as a matrix. By the regularity assumptions of $A$ and $\delta$, $g\in C^1(\overline\Omega)$. Let $\lap_g$ be the associated Laplace-Beltrami operator, which in local coordinates is given by
\beq
\lap_g  = \frac{1}{\sqrt{\det g}} \sum_{i, j = 1}^n \frac{\p }{\p x_i}\Big(\sqrt{\det g} g^{ij} \frac{\p }{\p x_j}\Big).
\eeq
Then consider the following Dirichlet boundary value problem
\beqq\label{eq-rie-0}
\left\{\begin{aligned}
    \lap_g u &= 0,\quad \text{in } \Omega, \\\quad u&= f,\quad \mbox{on }\partial\Omega.
\end{aligned}\right. 
\eeqq
%Suppose that $0$ is not an eigenvalue. Then 
For $f\in H^{\frac12}(\p\Omega)$, there is a unique solution $u\in H^1(\Omega)$ on $\Omega$ and the Dirichlet-to-Neumann map of problem \eqref{eq-rie-0} is defined by 
\beq
\La_gf = \sum_{i, j = 1}^n \nu_i g^{ij} \sqrt{\det g}  \frac{\p u}{\p x_j}  |_{\p \Omega}
\eeq
which is bounded from $H^{\frac12}(\p\Omega)$ to $H^{-\frac12}(\p\Omega)$. It is proved in \cite{LeUh} that $\La_g = \La_\gamma$. We can write $\gamma = \phi \gamma_0$ and $ \phi  = 1 + \delta >0$. Let  $g_0$ be defined by 
\beqq\label{eq-g0}
g_{0} = (\det \gamma_0)^{\frac{1}{n-2}} \gamma_0^{-1}.
\eeqq
Note that 
\beq
g = \phi^{\frac{n}{n-2}} (\det \gamma_0)^{\frac{1}{n-2}}  \phi^{-1} \gamma_0^{-1} = \phi^{\frac{2}{n-2}} g_{0}.
\eeq
Thus, $g$ is conformal to $g_0.$ Now consider the following Dirichlet problem for the Schr\"odinger operator on $(\Omega, g)$ 
\beqq\label{eq-rie}
\left\{\begin{aligned}
(-\lap_g  + q) u &= 0,\quad \text{in } \Omega,\\
\quad u& = f,\quad \mbox{on }\partial\Omega.
\end{aligned}\right.
\eeqq 
The associated Dirichlet-to-Neumann map is well-defined and is denoted by $\La_{g, q}$. For a general metric $g$ and potential $q$, $0$ can be an eigenvalue for the elliptic operator in \eqref{eq-rie} (equipped with the zero Dirichlet boundary condition), which however is not the case for $g$, $q$ derived from the conductivity problem. According to  \cite[Proposition 8.2]{DKSU}, we have 
\beqq\label{eq-conf}
\La_{g, 0} = \La_{g_0, q}\quad \mbox{with }  q = -\phi^{-1 - \frac{n-2}{4}} \lap_g (\phi^{\frac{n-2}{4}}). 
\eeqq
Therefore, we have proved that $\La_\gamma = \La_{g_0, q}$ with $g_0$ in \eqref{eq-g0} and $q$ in \eqref{eq-conf}. 

We study the inversion of the map $\gamma \rightarrow \La_\gamma$ in two steps. First, we recover $q$ from the Dirichlet-to-Neumann map $\La_{g_0, q}$. Second, we recover the perturbation $\delta$ from the potential $q$. 
For the first step, we can follow the same approach as in the 2D case by using perturbations of harmonic functions. However, for dimensions $n \geq 3$, it is more convenient to use complex geometric optics (CGO) solutions. % which will allow removing the smallness condition on $\delta$. 
Consider
\beqq\label{eq-schro1}
-\lap u + q u = 0\quad \text{in } \mbr^n.
\eeqq
We look for CGO solutions of the form 
\beqq\label{eq-cgo}
u(x) = e^{\xi \cdot x} (1 + \psi(x, \xi)),
\eeqq
%Consider 
%\beqq\label{eq-schro0}
%\nabla \cdot (\gamma(x)\nabla u(x)) = 0 \text{ in } \mbr^n
%\eeqq
where $\xi\in \mbc^n$ is a complex vector with $\xi\cdot \xi = 0$. When $q=  0$, $u(x) = e^{\xi \cdot x}$ is the harmonic function we have used in Section \ref{sec-pf2d}. 
For $n\geq 3$, the CGO solution was constructed in \cite{SyUh}. We  use the weighted Sobolev spaces in \cite{SyUh}. For $\eta \in \mathbb{R}$ and $m\in \mathbb{N}$, let
\beq
\|f\|_{L^2_\eta(\mathbb{R}^n)}^2 = \int_{\mbr^n} (1 + |x|^2)^\eta |f(x)|^2 \d x\quad \text{and}\quad
\|f\|_{H^m_\eta(\mathbb{R}^n)}^2 = \sum_{|\alpha|\leq m} \|D^\alpha f\|_{L^2_\eta(\mathbb{R}^n)}^2.
\eeq
Then we have the following estimate.
\begin{theorem}[{\cite[Corollary 2.5]{SyUh}}]\label{thm-cgo1}
Let $\xi\in \mbc$, $\xi \cdot \xi = 0$, $|\xi|> \beta >0$. Let $s>\frac n2$ and $\eta \in(-1, 0)$. Then there is $\eps(\Omega, \beta) >0$ such that if 
$\|q\|_{H^s(\mathbb{R}^n)} < \eps |\xi|$, 
there is a unique solution $u$ of the form \eqref{eq-cgo} and a constant $C$ depending on $\eps$ such that 
\beq
\|\psi\|_{H^s_{\eta}(\mathbb{R}^n)} \leq C|\xi|^{-1}\|q\|_{H^s(\mathbb{R}^n)}.
\eeq
\end{theorem}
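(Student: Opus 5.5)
This is the Sylvester--Uhlmann CGO estimate. The plan is to convert the equation for $\psi$ into a fixed-point problem in a weighted Sobolev space and close it with a contraction argument.

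\textbf{Reduction to a fixed-point equation.} Substituting $u=e^{\xi\cdot x}(1+\psi)$ into \eqref{eq-schro1} and using $\xi\cdot\xi=0$ gives
\beq
\Delta_\xi \psi := \lap\psi + 2\xi\cdot\nabla\psi = q(1+\psi).
\eeq
The operator $\Delta_\xi$ has symbol $p_\xi(\eta)=-|\eta|^2+2{\rm i}\xi\cdot\eta$. This symbol vanishes on a codimension-two sphere in $\mbr^n$, so $1/p_\xi$ is locally integrable there. The inverse $G_\xi=\Delta_\xi^{-1}$ is defined on $\mathcal S(\mbr^n)$ by $\widehat{G_\xi f}=\widehat f/p_\xi$. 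The equation then becomes
\beq
\psi = G_\xi q + G_\xi(q\psi).
\eeq

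\textbf{Key estimate.} For $\eta\in(-1,0)$ and $\delta=-\eta$, I would prove
\beq
\|G_\xi f\|_{L^2_{\eta}(\mbr^n)}\le C|\xi|^{-1}\|f\|_{L^2_{\eta+1}(\mbr^n)},
\eeq
with $C$ depending on $\eta$, $n$ and the lower bound $\beta$. The approach is as follows.
\begin{itemize}
\item Rotate and scale so that $\xi=|\xi|(e_1+{\rm i}e_2)/\sqrt2$.
\item Decompose dyadically near the characteristic set $\{p_\xi=0\}$.
\item Use $|p_\xi(\eta)|\gtrsim |\xi|\,\mathrm{dist}(\eta,\{p_\xi=0\})$ near that set.
\item Apply a one-dimensional weighted Hardy-type inequality in the transverse variable.
\end{itemize}
This is the technical heart of \cite{SyUh}.

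Since $G_\xi$ is a Fourier multiplier, it commutes with derivatives, so the same bound holds in $H^s_\eta$. For the multiplication term I would use that $q$ is supported in $\overline\Omega$, taking a cutoff $\chi_\Omega$ if necessary. This gives
\beq
\|q\psi\|_{H^s_{\eta+1}}\le C_\Omega\|q\|_{H^s}\|\psi\|_{H^s_\eta},
\eeq
where $s>n/2$ makes $H^s$ an algebra. The weights are harmless on the compact set $\overline\Omega$.

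\textbf{Closing the argument.} The map $T\psi=G_\xi q+G_\xi(q\psi)$ then satisfies
\beq
\|T\psi_1-T\psi_2\|_{H^s_\eta}\le C|\xi|^{-1}\|q\|_{H^s}\|\psi_1-\psi_2\|_{H^s_\eta}.
\eeq
If $\|q\|_{H^s}<\eps|\xi|$ with $C\eps\le 1/2$, then $T$ is a contraction on $H^s_\eta$. Banach's fixed point theorem gives a unique $\psi$ with
\beq
\|\psi\|_{H^s_\eta}\le 2\|G_\xi q\|_{H^s_\eta}\le C|\xi|^{-1}\|q\|_{H^s}.
\eeq
Uniqueness holds within $H^s_\eta$.

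\textbf{Main obstacle.} The main difficulty is the weighted resolvent bound for $G_\xi$, specifically the uniform $|\xi|^{-1}$ decay across the characteristic variety. I would follow the Sylvester--Uhlmann argument for it rather than reprove it.
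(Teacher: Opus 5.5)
Your proposal is correct and matches the argument behind the result; the paper gives no proof of its own and simply cites \cite[Corollary 2.5]{SyUh}. That proof is exactly yours: the weighted bound $\|\Delta_\xi^{-1} f\|_{L^2_\eta}\le C|\xi|^{-1}\|f\|_{L^2_{\eta+1}}$ for $\eta\in(-1,0)$, commuted with derivatives, then a contraction (Neumann-series) argument in $H^s_\eta$ using the compact support of $q$ and the algebra property of $H^s$ for $s>n/2$.

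One step should be made explicit. Uniqueness of the solution $u$, as opposed to uniqueness of the fixed point of $T$, requires $\Delta_\xi$ to be injective on $L^2_\eta$ for $\eta>-1$. This holds because a nonzero $\widehat w$ supported on the codimension-two characteristic sphere cannot lie in $H^{\eta}$, and it is part of the Sylvester--Uhlmann estimate you defer to.
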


Note that the Schr\"odinger equation $-\lap_{g_0} u + qu = 0$ involves a smooth metric $g_0$. It seems not known how to construct CGO solutions for a smooth metric for dimension $n\geq3$. Below, we treat $g_0$ as a perturbation of the Euclidean metric and find a perturbation of the CGO solution \eqref{eq-cgo}.
\begin{lemma}\label{lm-cgo2}
Suppose $q\in H^s(\mbr^n)$, with $s>\max(\frac{n}{2}, 2), n\geq 3$, is compactly supported in $\Omega$. Let $g_0$ be the $C^1(\overline\Omega)$ Riemannian metric defined in \eqref{eq-g0}. Then there exists $w\in H^2(\Omega)$  satisfying 
\beq
-\lap_{g_0} w + qw = 0 \quad \text{in } \Omega
\eeq
of the form $w = u + v$ where $u$ is given by \eqref{eq-cgo} with $\psi$ in Theorem \ref{thm-cgo1} and $v$ satisfies  
\beq
\|v\|_{H^2(\Omega)} \leq  C|\xi|^2 e^{C |\xi|},
\eeq
for $|\xi|$ sufficiently large so that $\|q\|_{H^s(\mbr^n)} < \eps |\xi|$, where the constant $C$ depends on $A$, $\Omega$ and $\eps$ in Theorem \ref{thm-cgo1}.
\end{lemma}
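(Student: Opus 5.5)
The plan is to mimic the proof of Lemma \ref{lm-per1}: take the Euclidean CGO solution $u$ of \eqref{eq-schro1} from Theorem \ref{thm-cgo1}, regard $-\lap_{g_0}$ as a perturbation of $-\lap$, and solve a Dirichlet problem for the correction $v$ with zero boundary data. We set $w = u + v$ and require
\beq
\left\{\begin{aligned}
(-\lap_{g_0} + q) v &= (\lap_{g_0} - \lap) u = (\lap_{g_0}-\lap)u + (-\lap u + qu),\quad \text{in } \Omega,\\
v &= 0,\quad \text{on } \p\Omega,
\end{aligned}\right.
\eeq
where we used $-\lap u + q u = 0$. Then $-\lap_{g_0} w + q w = 0$ in $\Omega$ automatically. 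Writing $F := (\lap_{g_0} - \lap)u$, everything reduces to two steps: solvability of this Dirichlet problem with an $H^2$ estimate, and a bound on $\|F\|_{L^2(\Omega)}$.

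\textbf{Bounding $F$.} In coordinates,
\beq
\lap_{g_0} - \lap = \sum_{i,j}(g_0^{ij}-\delta_{ij})\p_i\p_j + \sum_j b_j\p_j ,
\eeq
where the $b_j$ involve first derivatives of $g_0$ and are bounded since $g_0\in C^1(\overline\Omega)$. Hence $\|F\|_{L^2(\Omega)} \leq C\|u\|_{H^2(\Omega)}$, with $C$ depending on $A$. Now $u = e^{\xi\cdot x}(1+\psi)$ and $|e^{\xi\cdot x}| = e^{\mathrm{Re}\,\xi\cdot x}\le e^{C|\xi|}$ on the bounded set $\Omega$. Each derivative of $e^{\xi\cdot x}$ produces a factor $|\xi|$. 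Since $\eta\in(-1,0)$ and $\Omega$ is bounded, the weight $(1+|x|^2)^\eta$ is bounded below on $\Omega$, so $\|\psi\|_{H^2(\Omega)}\le C\|\psi\|_{H^s_\eta}\le C|\xi|^{-1}\|q\|_{H^s}\le C\eps$. This uses $s\ge 2$, which explains the hypothesis $s>\max(n/2,2)$. Together these give $\|u\|_{H^2(\Omega)}\le C|\xi|^2 e^{C|\xi|}$, and therefore $\|F\|_{L^2(\Omega)}\le C|\xi|^2e^{C|\xi|}$.

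\textbf{Solvability and the $H^2$ estimate (main obstacle).} Unlike the positive-definite operators in Lemmas \ref{lm-per1}--\ref{lm-per2}, $-\lap_{g_0}+q$ need not be coercive, because $q$ may be negative. One must therefore exclude $0$ as a Dirichlet eigenvalue. The paper's remark after \eqref{eq-rie} handles this: since $q$ is given by \eqref{eq-conf}, the conformal identity \cite[Proposition 8.2]{DKSU} shows that $-\lap_{g_0}+q$ is conjugate, via multiplication by $\phi^{\frac{n-2}{4}}$ and a positive factor, to $-\lap_g$, i.e.\ to the conductivity operator $\nabla\cdot(\gamma\nabla)$. That operator has trivial Dirichlet kernel. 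By the Fredholm alternative in $H^1_0(\Omega)$ there is then a unique $v\in H^1_0$ with $\|v\|_{H^1}\le C\|F\|_{H^{-1}}$. Plan B, if one prefers not to invoke the structure of $q$, is to use smallness $\|q\|_{L^\infty}\le C\|q\|_{H^s}$ (Sobolev embedding, $s>n/2$) to get coercivity. The $H^1$ bound upgrades to $H^2$ by global elliptic regularity \cite[Theorem 8.12]{GiTr}, which applies because the principal coefficients of $\lap_{g_0}$ are $C^1$, $\p\Omega$ is smooth and $q\in L^\infty$. This yields
\beq
\|v\|_{H^2(\Omega)}\le C\|F\|_{L^2(\Omega)}\le C|\xi|^2e^{C|\xi|},
\eeq
which completes the plan.
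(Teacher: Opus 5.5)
Your proposal is essentially the paper's proof. You write $-\lap_{g_0} = -\lap + P(x,\p)$ with continuous coefficients, solve the zero-Dirichlet problem for $v$ with right-hand side $\pm P(x,\p)u$, and bound $\|P(x,\p)u\|_{L^2(\Omega)}\le C|\xi|^2e^{C|\xi|}$ via $u\in H^2(\Omega)$ (where $s>2$ enters), then conclude by global elliptic regularity. The paper settles solvability of that Dirichlet problem only with the phrase ``standard elliptic regularity theory,'' so your point about a possible zero Dirichlet eigenvalue is a real addition. Note, however, that Plan A uses the form \eqref{eq-conf} of $q$, which is not a hypothesis of the lemma, and its Fredholm constant depends on $q$ rather than only on $A$, $\Omega$, $\eps$; Plan B needs a smallness of $\|q\|_{H^s(\mbr^n)}$ that is not assumed.
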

\bpf
Using the expressions of $g_0$ in \eqref{eq-g0}, we have 
\beq
-\lap_{g_0} w  = -\lap w + P(x, \p) w\quad \mbox{in }\Omega, 
\eeq
where $P(x, \p)$ is a second-order differential operator of the form 
\beq
P(x, \p) = \sum_{i, j = 1}^n a_{ij}\p_i \p_j  + \sum_{k = 1}^n b_k \p_k, 
\eeq
where $a_\bullet, b_\bullet$ %\sout{are smooth functions on $\Omega$ and their $C^2(\overline{\Omega})$ norms are bounded (depending on the $C^2(\overline{\Omega})$ norm of $A$)} 
are continuous on $\overline\Omega$ and their $C^0$ norms are bounded (depending on the $C^1(\overline{\Omega})$ norm of $A$).  Using the governing equations for $w$ and $u$,  $v$ satisfies  
\beq
\left\{\begin{aligned}
-\lap_{g_0} v + q v &= P(x, \p) u,\quad \text{in } \Omega,\\
v &= 0, \quad\text{on } \p\Omega.
\end{aligned}\right.
\eeq
Using the representation \eqref{eq-cgo}  and the regularity estimate of $\psi$ in Theorem \ref{thm-cgo1} (with the condition $s>\max(\frac{n}{2},2)$), the function $u$ belongs to $H^2(\Omega)$. Then we can estimate
\beq
\|P(x, \p)u\|_{L^2(\Omega)}\leq C|\xi|^2 e^{C |\xi|}.  
\eeq
Thus, by the standard elliptic regularity theory, we get 
\beq
\|v\|_{H^2(\Omega)} \leq C \|P(x, \p)u\|_{L^2(\Omega)}\leq C|\xi|^2 e^{C |\xi|}. 
\eeq
This completes the proof of the lemma.
\epf
 
Now consider the recovery of the potential $q$ from $\La_{g_0, q}$. For $N>0$, we define the following admissible set of the potential $q$:
\beqq\label{eq-dn}
\begin{gathered}
\mcd_{N} = \{q \in L^\infty(\Omega): \text{$q$ is compactly supported in $\Omega$, }\|q\|_{L^\infty(\Omega)} \leq N \|\chi(D) q\|_{L^2(\mbr^n)}\}. 
\end{gathered}
\eeqq
\begin{proposition}\label{prop-main1} 
Let $n\geq 3$, $s>\max(\frac{n}{2}, 2)$, and $M>0$. Then for $N>0$ sufficiently small,  for $\|q\|_{H^s(\Omega)} \leq M$ and $q\in \mcd_N$,  
we have 
\beqq\label{eq-lipest}  
\|q\|_{L^\infty(\Omega)} \leq  N^{-1} \| \La_{g_0, q} -   \La_{g_0, 0}\|_*. 
\eeqq
\end{proposition}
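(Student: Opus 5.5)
We follow the same low-frequency scheme as in the proof of Theorem \ref{thm-main2d}, now using CGO solutions. For $k\in\mbr^n$ with $|k|\le 1$ (the support of $\chi$), choose two orthogonal unit vectors $e_1,e_2\perp k$ and set
\beq
\xi_1 = \tfrac12{\rm i}k + \tau e_1 + {\rm i}\rho e_2,\qquad \xi_2 = \tfrac12{\rm i}k - \tau e_1 - {\rm i}\rho e_2,
\eeq
with $\rho,\tau\ge 0$ chosen so that $\xi_j\cdot\xi_j = 0$, i.e.\ $\tau^2 = \rho^2 + |k|^2/4$. Then $\xi_1+\xi_2 = {\rm i}k$. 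We fix $\tau$ as a large constant $\tau_0$ independent of $k$, large enough that $M<\eps|\xi_j|$ in Theorem \ref{thm-cgo1}. This is possible because $\|q\|_{H^s}\le M$ and $|\xi_j|\ge\tau_0$. Crucially, $\tau_0$ depends only on $M,\Omega$ and not on $q$. Lemma \ref{lm-cgo2} then gives solutions $w_j = u_j + v_j$ of $-\lap_{g_0}w+qw=0$. We also take the reference solutions $\tilde w_j$ of $-\lap_{g_0}\tilde w_j = 0$ with $\tilde w_j = w_j$ on $\p\Omega$.

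The key identity is the Alessandrini-type formula. With respect to the volume form of $g_0$,
\beq
\int_{\p\Omega} w_1 (\La_{g_0,q}-\La_{g_0,0}) w_2\d S = \int_\Omega q\, w_1 \tilde w_2 \sqrt{\det g_0}\d x,
\eeq
which follows from Green's identity for the two problems. Write $\tilde w_2 = w_2 + r_2$, where $r_2$ solves $-\lap_{g_0} r_2 = -q w_2$ with $r_2 = 0$ on $\p\Omega$. Then $\|r_2\|_{H^1}\le C\|q\|_{L^\infty}\|w_2\|_{L^2}$.

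Expanding $w_1w_2 = e^{{\rm i}k\cdot x}(1+\psi_1)(1+\psi_2) + (\text{terms with } v_j)$ gives
\beq
\int_\Omega q\sqrt{\det g_0}\,e^{{\rm i}k\cdot x}\d x = \int_{\p\Omega} w_1(\La_{g_0,q}-\La_{g_0,0})w_2\d S + E .
\eeq
Here $E$ collects the terms involving $\psi_j$, $v_j$ and $r_2$. Every such term has a factor $q$, so $|E|\le C(\tau_0)\|q\|_{L^\infty}$ uniformly in $|k|\le1$.

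The main obstacle is precisely this point. The $v_j$ and $\psi_j$ are not small; they are merely bounded for fixed $\tau_0$. Consequently $E$ is only $O(\|q\|_{L^\infty})$, not higher order. This is the same structure as in the 2D proof, where the corresponding error was also $C\|\delta\|_{L^\infty}$, and it suffices because of the absorption argument below.

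The boundary term is bounded by $C(\tau_0)\|\La_{g_0,q}-\La_{g_0,0}\|_*$ by the trace theorem, since $\|w_j\|_{H^1}$ is uniformly bounded for $|k|\le1$. The weight $\sqrt{\det g_0}$ is a nuisance: the left side is the Fourier transform of $q\sqrt{\det g_0}$, not of $q$. To deal with it, we either absorb the weight into the admissible set, or write $\sqrt{\det g_0} = 1 + (\sqrt{\det g_0}-1)$ and move the second part into $E$. The latter is again bounded by $C\|q\|_{L^\infty}$, which is harmless in the absorption.

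Squaring and integrating over $|k|\le 1$, and using Plancherel, gives
\beq
\|\chi(D)q\|_{L^2}^2 \le C_0\|\La_{g_0,q}-\La_{g_0,0}\|_*^2 + C_1\|q\|_{L^\infty}^2 \le C_0\|\La_{g_0,q}-\La_{g_0,0}\|_*^2 + C_1N^2\|\chi(D)q\|_{L^2}^2 .
\eeq
Choosing $N$ small so that $C_1N^2\le\frac12$, we obtain $\|q\|_{L^\infty}\le N\|\chi(D)q\|_{L^2}\le \sqrt{2C_0}\,N\|\La_{g_0,q}-\La_{g_0,0}\|_*$. Shrinking $N$ further so that $\sqrt{2C_0}N\le N^{-1}$ yields \eqref{eq-lipest}. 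The constants $C_0,C_1$ depend on $M,A,\Omega$ through $\tau_0$ but not on $N$, which makes this last adjustment legitimate.
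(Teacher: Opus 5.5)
Your argument is correct and is essentially the paper's proof. Both use CGO solutions from Lemma \ref{lm-cgo2} with a fixed large parameter determined by $M$ and $\eps$, recover $\widehat q(k)$ for $|k|\le 1$ from an Alessandrini-type identity with every remainder bounded by $C(\tau_0)\|q\|_{L^\infty(\Omega)}$, and absorb those remainders by taking $N$ small.

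The differences are minor. You pair two solutions of the $q$-equation and so need the extra correction $r_2$, whereas the paper pairs a solution for $q_1=0$ with one for $q_2=q$. Note that $r_2$ actually satisfies $-\lap_{g_0} r_2 = q w_2$ rather than $-q w_2$, though the sign is irrelevant. You also explicitly account for the density $\sqrt{\det g_0}$, which the paper's identity \eqref{eq-int} does not display.
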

\bpf 
Without loss of generality, let $q_1 = 0$, $q_2 =q$ and for $i=1,2$ consider 
\beqq\label{eq-schro12}
\left\{\begin{aligned}
-\lap_{g_0} w_i + q_i w_i &= 0,\quad \text{in } \Omega,\\
w_i &= f,\quad \text{on } \p \Omega. 
\end{aligned}\right.
\eeqq 
Then we have
\beqq\label{eq-int}
\int_{\Omega} (q_1 -q_2)w_1 w_2 \d x = \int_{\p \Omega} w_1 (\La_{g_0, q_1} -  \La_{g_0, q_2}) w_2 \d S. 
\eeqq
Let $w_i=u_i+v_i$, $i = 1, 2$ be the perturbed CGO solution given in Lemma \ref{lm-cgo2}. We set 
$
\xi_1 = \mathrm{i}( \frac{k}{2} +    \tau \eta)  + \xi$ and $\xi_2 = \mathrm{i} (\frac{k}{2} -   \tau \eta) - \xi$,
where $k\in \mbr^n$, $|k| \geq \beta$, $\tau >1$ and $\xi, \eta \in \mbr^n$ are chosen such that 
\beq
k\cdot \eta = k \cdot \xi = \eta \cdot \xi = 0, \quad |\eta| = 1,\quad  |\xi|^2 = \tau^2 + \frac{|k|^2}{4}. 
\eeq
To apply Theorem \ref{thm-cgo1}, it suffices to choose $\xi_i$ such that  $M\leq \eps |\xi_i|$.  From the  construction of $\xi_i$, it suffices to choose $\eps$, $\tau$ and $ k$ such that $M \leq \eps \tau$.  We get from the identity \eqref{eq-int} that 
\beqq\label{eq-id1}
\begin{aligned}
\int_{\Omega} (q_1 -q_2)e^{\mathrm{i} k\cdot x}\d x =& \int_{\p \Omega} w_1 (\La_{g_0, q_1} -  \La_{g_0, q_2}) w_2 \d S \\
&- \int_{\Omega} (q_1 -q_2) (u_1 v_2 + u_2 v_1 + v_1 v_2) \d x\\
&- \int_{\Omega} (q_1 -q_2) e^{\mathrm{i} k\cdot x}(\psi_1+  \psi_2  + \psi_1 \psi_2) \d x:={\rm I} + {\rm II} + {\rm III}.
\end{aligned}
\eeqq
Note that $q_1 -q_2 = -q$.  The left hand side gives  $-\hat q(-k)$. Next we bound the three terms ${\rm I}$, $\rm II$ and ${\rm III}$ separately. To estimate the term ${\rm I}$, using Theorem \ref{thm-cgo1}, Lemma \ref{lm-cgo2} and the trace theorem, we get (for $i= 1,2$) 
\beq
\begin{split}
\|w_i\|_{H^{1/2}(\p\Omega)} \leq C\|w_i\|_{H^1(\Omega)} \leq C(\|u_i\|_{H^1(\Omega)} + \|v_i\|_{H^1(\Omega)}) \leq C|\xi_i|^2 e^{C|\xi_i|}.
\end{split}
\eeq 
For $\tau$ large (hence $|\xi_i|$ large), we can find $C_1>0$ (depending on the constant $C$ in Lemma \ref{lm-cgo2}) so that $\|w_i\|_{H^{\frac12}(\p\Omega)} \leq C_1 e^{C_1|\xi_i|}.$ Then we estimate 
\beqq\label{eq-b}
\begin{aligned}
|{\rm I}|&\leq  \|w_1\|_{H^\ha(\partial\Omega)} \|w_2\|_{H^\ha(\partial\Omega)} \|\La_{g_0, q} - \La_{g_0, 0}\|_* \\
&\leq C_1^2 e^{2 C_1(|k| + \tau)} \| \La_{g_0, q} - \La_{g_0, 0}\|_*, 
\end{aligned}
\eeqq
By a similar argument, we can obtain 
\begin{align}
|{\rm II} |&\leq  \|q\|_{L^\infty(\Omega)}(\|u_1\|_{L^2(\Omega)} \|v_2\|_{L^2(\Omega)} + \|u_2\|_{L^2(\Omega)} \|v_1\|_{L^2(\Omega)} + \|v_1\|_{L^2(\Omega)} \|v_2\|_{L^2(\Omega)})\nonumber\\
&\leq C_2\|q\|_{L^\infty(\Omega)}   e^{C_2 |\xi|}, \label{eq-b1}
\end{align}
where the constant $C_2$ also depends on the constant $C$ in Lemma \ref{lm-cgo2}. Last, we have 
\begin{align}
|{\rm III}|&\leq  C\|q\|_{L^\infty(\Omega)}(\|\psi_1\|_{L^2(\Omega)}+  \|\psi_2\|_{L^2(\Omega)}  + \|\psi_1\|_{L^2(\Omega)} \|\psi_2\|_{L^2(\Omega)})\nonumber\\ 
&\leq C\|q\|_{L^\infty(\Omega)}(2|\xi|^{-1}\|q\|_{H^s(\Omega)}+|\xi|^{-2}\|q\|_{H^s(\Omega)}^2)
\leq C_3\|q\|_{L^\infty(\Omega)}  |\xi|^{-1}, \label{eq-b2} 
\end{align}
where $C_3>0$ depends on $\Omega$, $M$ and $\beta$. Now we can deduce from the identity \eqref{eq-id1} that 
\beq 
\begin{gathered}
|\widehat q(k)| 
\leq C e^{C(|k|+ \tau)} \|\La_{g_0, q} - \La_{g_0, 0}\|_* + C  \|q\|_{L^\infty(\Omega)}    e^{C (|k| + \tau)}
\end{gathered}
\eeq 
for some $C>0$. Thus, 
\beq 
\begin{gathered}
\int_{\mbr^n}|\chi(k)|   |\widehat q(k)|^2 \d k \leq  C e^{C\tau}\|\La_{g_0, q} - \La_{g_0, 0}\|_*^2 +   C  e^{C\tau}\|q\|_{L^\infty(\Omega)}^2, 
\end{gathered}
\eeq
where the constant $C$  depends on $M$ and $\eps$. From the assumption $\|q\|_{L^\infty(\Omega)} \leq N \|\chi(D) q \|_{L^2(\mbr^n)}$ and the Plancherel theorem, we get 
\beq
\|\chi(D) q\|_{L^2(\mbr^n)}^2 \leq C  e^{C\tau} \|\La_{g_0, q} -  \La_{g_0, 0}\|_*^2 +   N^2 C  e^{ C\tau} \|\chi(D) q\|_{L^2(\mbr^n)}^2.
\eeq
Let $\tau>\frac{M}{\eps}.$ Then we can choose $N$ sufficiently small such that $N^2 C  e^{C \tau} < \frac{1}{2}$. We deduce 
\beq
\|q\|_{L^\infty(\Omega)}^2 \leq 2 C  e^{C\tau} \|\La_{g_0, q} -  \La_{g_0, 0}\|_*^2  \leq % 2 C  N^2 \Big(\frac{1}{2 N^2 C_1}\Big)^{2/C_2}
N^{-2} \|\La_{g_0, q} -  \La_{g_0, 0}\|_*^2. 
\eeq
This implies the desired estimate \eqref{eq-lipest}.
\epf

Finally, we recover the conductivity perturbation $\delta$ from the potential $q$.  
\begin{theorem}\label{thm-main3d} 
Let $n\geq 3,$ $s>\max(\frac{n}{2}, 2)$, $M>0$ and $c_0<1$. Suppose that $A\in C^1(\overline\Omega)$ and $\delta\in C^1(\overline\Omega)$ is compactly supported in $\Omega$. Then for $N>0$ sufficiently small and  for $\gamma = (1 + \delta)(I + A)$ satisfying {\rm(i)} $\|\delta\|_{L^\infty(\Omega)}\leq c_0$ and
{\rm(ii)} $\|q\|_{H^s(\Omega)}\leq M$ and $q \in \mcd_{N}$,
where $q$ is defined as in \eqref{eq-conf}, we have for some $C$ independent $\delta$,  
\beqq\label{eq-lipest1} 
\|\delta\|_{L^\infty(\Omega)} \leq  C N^{-1}\|\La_{\gamma} - \La_{\gamma_0}\|_*. 
\eeqq
\end{theorem}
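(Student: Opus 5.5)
We combine Proposition \ref{prop-main1} with a stability estimate for the map $q\mapsto\delta$. The first step applies the identity $\La_\gamma=\La_{g,0}=\La_{g_0,q}$, established above via \cite{LeUh} and \eqref{eq-conf}. We also need $\La_{\gamma_0}=\La_{g_0,0}$, which is the case $\delta=0$, $\phi=1$, $q=0$. By hypothesis (ii), Proposition \ref{prop-main1} then gives, for $N$ small enough,
\beq
\|q\|_{L^\infty(\Omega)}\le N^{-1}\|\La_{g_0,q}-\La_{g_0,0}\|_*=N^{-1}\|\La_\gamma-\La_{\gamma_0}\|_* .
\eeq
It therefore suffices to prove $\|\delta\|_{L^\infty(\Omega)}\le C\|q\|_{L^\infty(\Omega)}$ with $C$ independent of $\delta$.

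For this second step, set $\varphi=\phi^{\frac{n-2}{4}}$. Definition \eqref{eq-conf} gives $\lap_g\varphi=-q\phi^{1+\frac{n-2}{4}}$. We rewrite this in terms of the fixed metric $g_0$. Since $g=\phi^{\frac{2}{n-2}}g_0=\varphi^{\frac{4}{n-2}}g_0$, the conformal change formula gives $\lap_g\varphi=\varphi^{-\frac{n+2}{n-2}}\lap_{g_0}(\varphi^{2})-\varphi^{-\frac{4}{n-2}}\cdot(\text{terms})$. We would use the standard identity directly on the conductivity equation instead. The cleanest route is the Liouville-type fact behind \cite[Proposition 8.2]{DKSU}: $\varphi$ solves the linear equation
\beq
(-\lap_{g_0}+q)\varphi=0 \ \text{in }\Omega,\qquad \varphi=1 \ \text{on }\p\Omega.
\eeq
The boundary condition holds because $\delta$ is compactly supported, so $\phi=1$ near $\p\Omega$. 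Then $\varphi-1=\phi^{\frac{n-2}{4}}-1$ vanishes on $\p\Omega$ and satisfies
\beq
(-\lap_{g_0}+q)(\varphi-1)=-q .
\eeq

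We then estimate $\varphi-1$ in $L^\infty$ by $\|q\|_{L^\infty}$. The operator $-\lap_{g_0}+q$ with Dirichlet condition has trivial kernel, since it comes from the conductivity problem. We want a bound on its inverse that is uniform over the admissible class. Here $q$ is bounded in $H^s\subset L^\infty$ by $M$, but its sign is unknown, so coercivity is not automatic. To get around this, we write $-\lap_{g_0}\varphi=-q\varphi$ and use $\varphi\ge(1-c_0)^{\frac{n-2}{4}}$ together with $\varphi\le(1+c_0)^{\frac{n-2}{4}}$, both from (i). This gives
\beq
\|\varphi-1\|_{L^\infty}\le C\|\lap_{g_0}(\varphi-1)\|_{L^p}\le C\|q\varphi\|_{L^\infty}\le C\|q\|_{L^\infty},
\eeq
using the Green's operator of $-\lap_{g_0}$ on $\Omega$ with $p>n/2$ (or $W^{2,p}$ estimates plus Sobolev embedding). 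The constant depends only on $g_0$, $\Omega$ and $c_0$. This is the key point: the factor $\varphi$ is bounded a priori by (i), so the nonlinearity never enters.

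Finally, $t\mapsto t^{\frac{4}{n-2}}$ is Lipschitz on $[(1-c_0)^{\frac{n-2}{4}},(1+c_0)^{\frac{n-2}{4}}]$. Hence
\beq
|\delta|=|\varphi^{\frac{4}{n-2}}-1|\le C|\varphi-1|,
\eeq
which yields $\|\delta\|_{L^\infty}\le C\|q\|_{L^\infty}\le CN^{-1}\|\La_\gamma-\La_{\gamma_0}\|_*$.

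The main obstacle is justifying the elliptic step rigorously at low regularity. The metric $g_0$ is only $C^1$, so $\lap_{g_0}$ in divergence form has $C^0$ coefficients after expansion. The $W^{2,p}$ estimate still holds via the divergence-form structure: one can use a De Giorgi--Nash/$L^\infty$ bound for $\nabla\cdot(\sqrt{\det g_0}\,g_0^{-1}\nabla v)=f$ with $f\in L^\infty$, as in \cite[Theorem 8.16]{GiTr}, which avoids needing second derivatives at all. I would use that maximum-principle-type estimate to conclude.
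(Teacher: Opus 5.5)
Your outer structure matches the paper's: Proposition \ref{prop-main1} bounds $\|q\|_{L^\infty(\Omega)}$ by $N^{-1}\|\La_\gamma-\La_{\gamma_0}\|_*$, and an elliptic estimate then passes from $q$ to $\delta$. The trouble is that the key equation of your second step is false. You justify $(-\lap_{g_0}+q)\varphi=0$ for $\varphi=\phi^{\frac{n-2}{4}}$ by writing $g=\phi^{\frac{2}{n-2}}g_0=\varphi^{\frac{4}{n-2}}g_0$. But $\varphi^{\frac{4}{n-2}}=\phi$, as you use yourself at the end, so this holds only for $n=4$.

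The Liouville reduction behind $\La_\gamma=\La_{g_0,q}$ is the substitution $u=\phi^{-1/2}v$ in $\nabla\cdot(\phi\gamma_0\nabla u)=0$, using $\lap_{g_0}=(\det\gamma_0)^{-\frac{1}{n-2}}\nabla\cdot(\gamma_0\nabla\,\cdot)$. It produces the potential $\phi^{-1/2}\lap_{g_0}\phi^{1/2}$, and the function this operator annihilates with boundary value $1$ is $\phi^{1/2}$, not $\phi^{\frac{n-2}{4}}$. Write $\phi=e^{2w}$. For $n\neq 4$, both functions are annihilated only if $e^{\frac n2 w}$ is $g_0$-harmonic with boundary value $1$, which forces $\delta\equiv 0$. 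So for $n=3$ your equation fails for every nontrivial $\delta$. The relation $-\lap_{g_0}\varphi=-q\varphi$, on which your $L^\infty$ bound rests, is therefore unjustified.

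The gap closes within your strategy, without passing to $g_0$. Keep the equation you correctly derived from \eqref{eq-conf}: $\lap_g\varphi=-q\,\phi^{1+\frac{n-2}{4}}$ with $\varphi=1$ on $\p\Omega$, which is the equation the paper's proof starts from. Since $\sqrt{\det g}\,g^{-1}=\gamma=\phi\gamma_0$, this is the divergence-form problem $\nabla\cdot(\phi\gamma_0\nabla(\varphi-1))=-\sqrt{\det g}\,q\,\phi^{1+\frac{n-2}{4}}$ with $\varphi-1\in H^1_0(\Omega)$.

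By (i), $\phi\gamma_0$ is bounded and uniformly elliptic, and $\sqrt{\det g}=\phi^{\frac{n}{n-2}}(\det\gamma_0)^{\frac{1}{n-2}}$ is bounded. The constants depend only on $n$, $c_0$, $A$ and $\Omega$. Apply \cite[Theorem 8.16]{GiTr}, which needs only bounded measurable coefficients, to $\pm(\varphi-1)$. This gives $\|\varphi-1\|_{L^\infty(\Omega)}\le C\|q\|_{L^\infty(\Omega)}$ with $C$ independent of $\delta$. Your Lipschitz step for $t\mapsto t^{\frac{4}{n-2}}$ then finishes the proof.

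For comparison, the paper applies an $H^2$ regularity estimate for $\lap_g$. Its constant a priori depends on derivatives of $g$, hence of $\delta$. The paper also records only $\|\delta\|_{L^2(\Omega)}\le C\|q\|_{L^\infty(\Omega)}$ in \eqref{eq-max1}. The corrected version of your argument gives the $L^\infty$ bound actually claimed in \eqref{eq-lipest1}, with a constant that is uniform in $\delta$.
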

\begin{proof}  
Using the definition of $q$ in \eqref{eq-conf}, we get 
\beq
\lap_g (\phi^{\frac{n-2}{4}}) = q \phi^{1 + \frac{n-2}{4}} \quad \text{in } \Omega.
\eeq
Also, since $\delta$ is compactly supported in $\Omega$, we have 
$\phi^{\frac{n-2}{4}} = 1$ on  $\p \Omega.$ Then by the elliptic regularity estimate and the fact $\|\phi\|_{L^\infty(\Omega)}\leq 1+\|\delta\|_{L^\infty(\Omega)}$,  we get 
\begin{align*}
\|\phi^{\frac{n-2}{4}} - 1\|_{L^2(\Omega)}\leq \|\phi^{\frac{n-2}{4}} - 1\|_{H^{2}(\Omega)} \leq \|q \phi^{1 + \frac{n-2}{4}}\|_{L^2(\Omega)}   \leq C\|q\|_{L^2(\Omega)}\leq C \|q\|_{L^\infty(\Omega)}. 
 \end{align*}
Since $\phi = 1 + \delta$ and $\|\delta\|_{L^\infty(\Omega)} \leq c_0 <1$, and the function $s\mapsto (1 + s)^{\frac{n-2}{4}} - 1$ is invertible for $|s| \leq c_0$, we deduce 
\beqq\label{eq-max1}
\begin{gathered} 
\|\delta\|_{L^2(\Omega)}\leq   C\|q\|_{L^\infty(\Omega)}, 
 \end{gathered}
\eeqq
where $C$ depends on $c_0.$ Using Proposition \ref{prop-main1}, we finish the proof of the theorem. 
\end{proof}

\begin{remark}
Theorem \ref{thm-main3d} provides conditions on the conductivity $\gamma$ in terms of the potential function $q$, i.e., $q\in\mathcal{D}_N$, so that the inversion of $\La_\gamma$ is stable. In practice, for a given training set, one can always find $M, N$ such that these conditions are satisfied. It would be interesting to find a more direct characterization on the admissible set of conductivities $\gamma$. In this regard, one can adapt the proof of Theorem \ref{thm-main2d} to higher dimensions at least for small $\delta$, which might also be useful in practice. 
\end{remark}

\section{Numerical experiments and discussions}\label{sec-num}
In this section, we present several numerical experiments to illustrate the deep Calder\'on method for the two-dimensional anisotropic EIT, including the instability on out-of-distribution test data.
\subsection{Implementation details} First we describe the setting of numerical experiments. The EIT inverse problem is posed on the unit disk $\Omega=\{x:|x|<1\}$. The conductivity images are of size $32\times32$ (with zero padding outside $\Omega$). We employ $L-1=31$ trigonometric current patterns as the Neumann boundary conditions, with the $k$th current density $g_k$ given by
\begin{equation}\label{current}
    g_k=\begin{cases}
    \cos(k\theta),\quad &k=1,2,\cdots,L/2,\\
    \sin((k-L/2)\theta),\quad &k=L/2+1,L/2+2,\cdots,L-1.
\end{cases}
\end{equation}
To obtain the Dirichlet data, we employ the standard Galerkin FEM with conforming linear elements to solve the direct problem, as is commonly used in numerical simulation. However, this corresponds to the inverse problem using the Neumann-to-Dirichlet data instead of the Dirichlet-to-Neumann data in the analysis. It is known that knowing the Dirichlet-to-Neumann map $\Lambda_\gamma$ is equivalent to knowing the Neumann-to-Dirichlet map \cite[Section 2.9]{FSU}. The stability estimate for the Neumann-to-Dirichlet map can be obtained from that for the Dirichlet-to-Neumann map $\Lambda_\gamma$; see e.g., \cite[Section 4.1]{AlbertiSantacesaria:2019}. In any case, these maps are only approximations of the actual measurements in EIT \cite{Cheney}. Note also the use of finitely many pairs of Cauchy data gives only an approximate Neumann-to-Dirichlet map, which is commonly adopted in practical inversion.

Following the standard isotropic Calder\'on method in \cite{Cen}, we obtain the Calder\'on reconstruction image of the anisotropic conductivity. See  Section 2.1 for further discussions on the choice.
The Calder\'{o}n method involves one hyper-parameter, the truncation radius $R$, whose determination requires some care. Note that in Example \ref{exam1}, the Fourier transform of a Gaussian is also a Gaussian, which is smooth and decays exponentially in the Fourier domain. Thus the frequency components beyond the cut-off radius $R$ are already negligible. However in Example \ref{exam2}, the Fourier transform of piecewise constant functions oscillates and decays slowly. Beyond the cut-off  radius $R$, there still exist significant high-frequency components arising from sharp edges. To ensure training stability, we set a smaller truncation radius $R$ to prevent oscillations that could hinder the training of the U-net $f_\theta$. 

Next we employ a U-net $f_\theta$ to postprocess the Calder\'{o}n reconstruction. The U-net \cite{Ron15} is one state-of-the-art convolutional encoder-decoder architecture for image segmentation and medical image reconstruction. The structure of the U-net is schematically illustrated in Fig.\ \ref{unet}, which consists of a contracting path (encoder) and an expanding path (decoder). The contracting path comprises multiple blocks, each containing one convolution, one activation, and one max-pooling layer. The role of the max-pooling layers is twofold: to down-sample the feature maps, thereby reducing their spatial dimensions, and to enhance the extraction of salient features by preserving the most activated responses within local regions. Each block begins with a convolutional layer
%$W_i^{\mathrm{c}} * z_i^{\mathrm{c}} + b_i^{\mathrm{c}}$ 
to detect local patterns. This is followed by a rectified linear unit (ReLU) activation $\rho(x)=\max\{x,0\}$. Finally, a max-pooling layer reduces the spatial dimensions by outputting the maximum value within each non-overlapping rectangular region, providing a form of translation invariance and reducing computational load. 
%The entire process for a single block is captured by:
%$$z_{i+1}^{\mathrm{c}} := \tau_i^{\mathrm{c}}\left(z_i^{\mathrm{c}}\right) = \mathcal{M}\left(\rho\left(W_i^{\mathrm{c}} * z_i^{\mathrm{c}}+b_i^{\mathrm{c}}\right)\right).$$
The input to the first contracting block is the Calder\'{o}n reconstruction $\tilde{\gamma}$. Similarly, the expanding path comprises several blocks, each using a transposed convolution to upscale the input. 
%The expanding block at the $(i+1)$-th layer is defined as:
%$$z_{i+1}^e := \tau_i^e (Z_i^e) = \mathcal{C}\left(\rho \left( \mathcal{T} (Z_i^e, W_i^e, b_i^e) \right)\right),
%$$
%where $\mathcal{T}$ denotes the transposed convolution operator, $W_i^e$ and $b_i^e$ are the convolutional filter and bias vector at the $i$-th layer, and $\mathcal{C}$ represents the concatenation operation. 
Additionally, we incorporate a skip connection from the input to the output at each level of the U-net.
%$$
%z_{i+1}^e := \tilde{\tau}_i^e (z_i^e) = \tau_{L-i+1}^c (z_{L-i+1}^c) + \tau_i^e (z_i^e),$$
%where $L$ is the total number of levels in the contracting path. 
It encourages the network to learn only the residual—the difference between input and output—thereby avoiding redundancy and alleviating the vanishing / exploding gradient problem during training \cite{He2016}.
%With $L$ contracting and $L$ expanding blocks, the full CNN model $f_\theta$ is represented by
%$$f_\theta (\tilde{\gamma}) = \mathcal{P} \circ \tilde{\tau}_L^e \circ \cdots \circ \tilde{\tau}_1^e \circ \tau_L^c \circ \cdots \circ \tau_1^c (\tilde{\gamma}).$$
%Here, $\mathcal{P}$ denotes the final output layer, which is implemented as a $1 \times 1$ convolutional layer followed by a Leaky Rectified Linear Unit (LeakyReLU) activation function:
%$$
%\text{LeakyReLU}(x) = \begin{cases} 
%x & \text{if } x \geq 0 \\\alpha x & \text{if } x %< 0 
%\end{cases}$$
%where $\alpha$ is a small positive slope coefficient for negative inputs, preventing neurons from "dying" during training \cite{Maas2013}. 
In the final output layer, we also apply a $1 \times 1$ convolutional layer followed by a leaky ReLU activation, which projects the multi-channel feature maps from the last expanding block to the desired output dimensions. The set of all trainable parameters (filters and biases) is denoted by $\theta$. We also experimented with shallower neural networks, which however tend to suffer from instability and poorer reconstruction quality.

\begin{figure}[hbt!]
\centering\setlength{\tabcolsep}{2pt}
\includegraphics[height=9cm]  {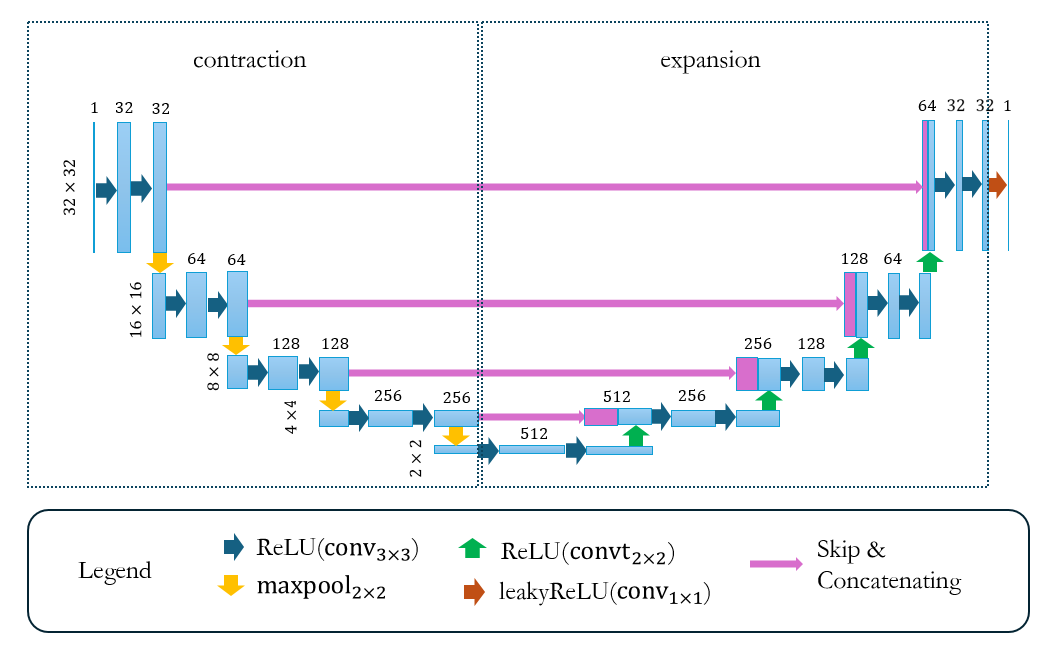}
\caption{\label{unet} The structure of U-net $f_\theta$.}
\end{figure}

To train the U-net $f_\theta$ on the paired training dataset $\{(\widetilde \gamma_n,\gamma_n^\dag)\}_{n=1}^N$ (with $\widetilde\gamma_n$ and $\gamma_n^\dagger$ being the Calder\'{o}n reconstruction and the true conductivity, respectively), we employ the standard mean squared error loss:
\begin{equation}
    \mathcal{L}(\theta) = \frac{1}{N} \sum_{n=1}^{N} \Big\| f_\theta (\tilde{\gamma}_n) - \gamma_n^\dag \Big\|_2^2.
\end{equation}
The total number of trainable parameters is 7,766,629 when the input image is of size $32\times32$.
Throughout, we employ $N=2,000$ training data pairs and validate the generalization ability of the neural network on 300 unseen samples. 
%We generate the noisy operator $\Lambda_\gamma^\delta$ (in its discrete form) by   $\Lambda_\gamma^\delta = \Lambda_{\gamma_0} + (1 + \xi^\delta) \odot (\Lambda_\gamma - \Lambda_{\gamma_0})$. Here, $\xi^\delta$ is an $L \times (L-1)$ random matrix whose each element $\xi_{ij}^\delta$ is independently and identically distributed following a Gaussian distribution with mean 0 and variance $\delta^2$, i.e., $\xi_{ij}^\delta \sim N(0, \delta^2)$. 
To minimize the loss $\mathcal{L}(\theta)$, we use the Adam optimizer \cite{KingmaBa} with a batch size 10. The training is performed on a computing node equipped with an NVIDIA L40 GPU (with 48GB memory), using the PyTorch platform. To quantitatively measure the accuracy of a reconstruction, we compute its $L^1(\Omega)$ and $L^2(\Omega)$ relative errors. All errors are computed after training is completed. The Python codes for reproducing all the experiments will be made available at the github link \url{https://github.com/hhjc-web/dcm-for-anisotropic-EIT}. 

\subsection{Numerical results and discussions} %In this part we present numerical results for two examples, one with the Gaussian and the other with the piecewise constant, to complement the theoretical findings.
In this part, we present numerical results for two examples corresponding to the settings in Section \ref{sec:trainingset}. The experiments are designed to verify the stability/instability analysis and test the generalization ability of the deep Calder\'on method. 
We remark that it is generally not clear how to sample functions from the admissible set $\mce_{M, N}$ in Theorem \ref{thm-main2d} for neural network training so as to achieve the universal approximation property, which in practice is limited by the complex loss landscape. Thus, in the experiments below, we use  ``low-dimensional" subsets of $\mce_{M, N}$ for training. We aim to obtain an approximation of the inversion of $\La_\gamma$ near the sets, which is sufficient for verifying the stability properties. 

\begin{example}\label{exam1}
Consider the setting of Gaussian data in Section \ref{sec:trainingset}. The background matrix $A$ and scalar-valued function $\delta$ are respectively given by  $$A=\begin{pmatrix}
	0&0.2e^{-|x|^2}\\0.2e^{-|x|^2}&0
\end{pmatrix}\quad\mbox{and}\quad\delta(x)=ae^{-\frac{b|x-x_0|^2}{2}}.$$
To show the effectiveness of the deep Calder\'{o}n method for recovering $\gamma$, we investigate three cases, and train the model separately for each case:
\begin{itemize}
\item[{\rm(i)}] $a\sim U(-0.5,0.5)$, $b\sim U(30,50)$, and $x_0\sim U(-0.5,0.5)^2$;
\item[{\rm(ii)}] $a\sim U(0.2,1.2)$, $b\sim U(30,50)$, and $x_0\sim U(-0.5,0.5)^2$;
\item[{\rm(iii)}] $a\sim U(0.2,1.2)$, $b=40$, and $x_0\sim U(-0.5,0.5)^2$.
\end{itemize}
In cases {\rm(i)} and {\rm(ii)}, we focus on the stability of the trained neural network. The difference is that in case {\rm(i)}, the conductivity perturbation $\delta$ can change signs with respect to the background but in case {\rm(ii)} the conductivity perturbation $\delta$ has a fixed sign. In case {\rm(iii)}, we mainly focus on the generalization ability of the trained neural network. To show the stability of the training and to capture the characteristics of U-net training dynamics, we present the recovery results at different stages of training, on the test data with $\delta_{t}(x)=a_{t}e^{-\frac{b_{t}|x|^2}{2}}$, with $b_{t}\in\{20,40,60\}$. Note that only $b_t = 40$ is in distribution, and the other two are out-of-distribution. 
\end{example}

In each of the settings (i)--(iii), we first generate 2,300 training data pairs, then use 2,000 of them for training the U-net and the remaining 300 pairs for validation. No test data are generated during the training process. The truncated radius $R$ of the Calder\'on method and training details are summarized in Table \ref{table:exam1}.

\begin{table}[hbt!]
\centering
\begin{threeparttable}
\caption{\label{table:exam1} The truncation radius $R$ of Calder\'on method and training details for Example~\ref{exam1}.}
\centering
\begin{tabular}[5pt]{c|c|c|c|c|c}
\toprule
Case & $R$ & training data & validation data &  learning rate & training time (min)\\ 
\midrule
(i) & 1.8 & 2,000 & 300 & $5\times10^{-4}$ & 10\\
(ii) & 1.8 & 2,000 & 300 & $5\times10^{-4}$ & 6\\
(iii) & 1.8 & 2,000 & 300 & $1\times10^{-4}$ & 18\\
\bottomrule
\end{tabular}
\end{threeparttable}
\end{table}

In case (i), we show the reconstructions and the training progress (by evaluating the model on the test set at different stages of training) in Fig.\ \ref{fig:exam1.1}. At the early training stage (after 20 epochs), the neural network is still suboptimal, and there are pronounced artifacts in the reconstructions, especially the dip close the left boundary, regardless of whether the test samples are from the training data distribution or outside of the distribution. This is likely due to the fact that the training has not identified a good parameter configuration. Note that we have actually trained many networks with the experimental setting, and the artifacts observed at the 20th epoch appear in the vast majority of the independent training runs. Hence it is not an isolated phenomenon but rather a persistent behavior under the given configuration. Further, the reconstructions exhibit noticeable oscillations overall, which is especially pronounced for large $b$.  After 40 epochs, the oscillations still persist and remain  pronounced in regions of higher variance. However, the neural network has significantly improved and the artifacts have largely been eliminated. Eventually, after 400 epochs, the training process has stabilized. The oscillations have become almost negligible, and the trained model performs well on in-distribution data. For out-of-distribution data, the reconstruction quality is very good for $b_t=20$ (small) but becomes worse for $b_t = 60$ (large). 
These results are consistent with the theoretical predictions in Section \ref{sec:trainingset}. Next we show one case in which $a_t$ is below 0 in the last row of Fig. \ref{fig:exam1.1}. While the neural network ultimately achieves decent results, the artifacts persists at the 40 epoch, suggesting more severe instability in the regime. Furthermore, the final $L^1(\Omega)$ and $L^2(\Omega)$ errors shown in Table \ref{table:exam1.1} are also larger that the preceding cases.

\begin{table}[hbt!]
\centering
\begin{threeparttable}
\caption{\label{table:exam1.1} The relative $L^1(\Omega)$ and $L^2(\Omega)$ errors of the numerical reconstructions for Example~\ref{exam1}(i) and Example \ref{exam2}(i).}
\centering
\begin{tabular}[5pt]{ccc|ccc}
\toprule
\multicolumn{3}{c}{Example \ref{exam1}(i)}& \multicolumn{3}{c}{Example \ref{exam2}(i)}\\
\cmidrule(lr){1-3} \cmidrule(lr){4-6}
$(b_t,a_t)$ & $L^1(\Omega)$ & $L^2(\Omega)$ & $(\beta_t,\alpha_t)$ & $L^1(\Omega)$ & $L^2(\Omega)$ \\ 
\midrule
$(60,0.05)$ & 3.64e-4 & 5.11e-4 & $(0.3,0.05)$ & 6.77e-4 & 1.90e-3\\
$(40,0.05)$ & 2.48e-4 & 2.99e-4 & $(0.5,0.05)$ & 1.34e-3 & 3.22e-3\\
$(20,0.05)$ & 4.02e-4 & 5.58e-4 & $(0.7,0.05)$ & 3.58e-3 & 7.20e-3\\
$(40,-0.05)$ & 4.41e-4 & 5.60e-4 & $(0.8,1.05)$ & 7.59e-2 & 1.21e-1\\
\bottomrule
\end{tabular}
\end{threeparttable}
\end{table}

\begin{figure}[t]
\centering\setlength{\tabcolsep}{1pt}
\begin{tabular}{cccccc}
\includegraphics[height=1.9cm]  {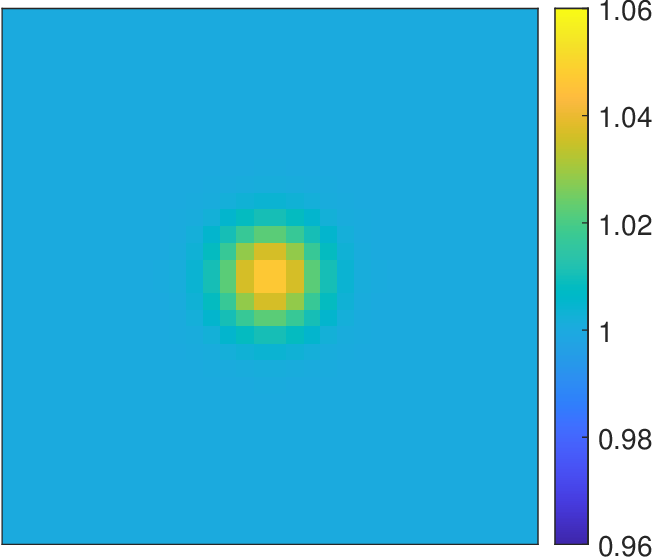} & \includegraphics[height=1.9cm]  {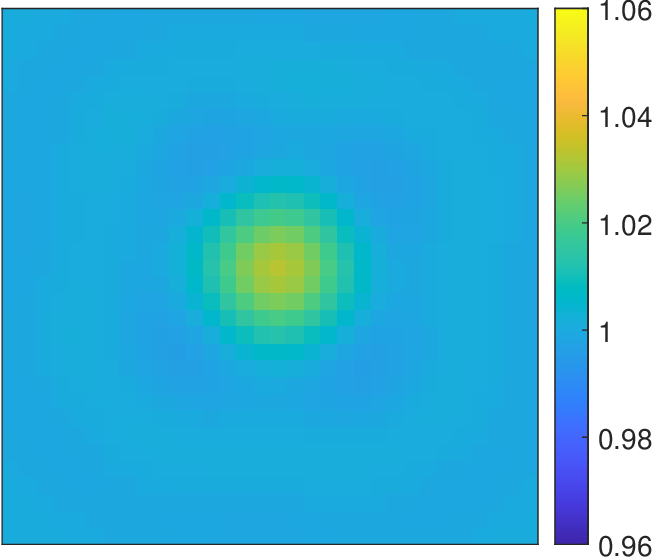} &
\includegraphics[height=1.9cm]  {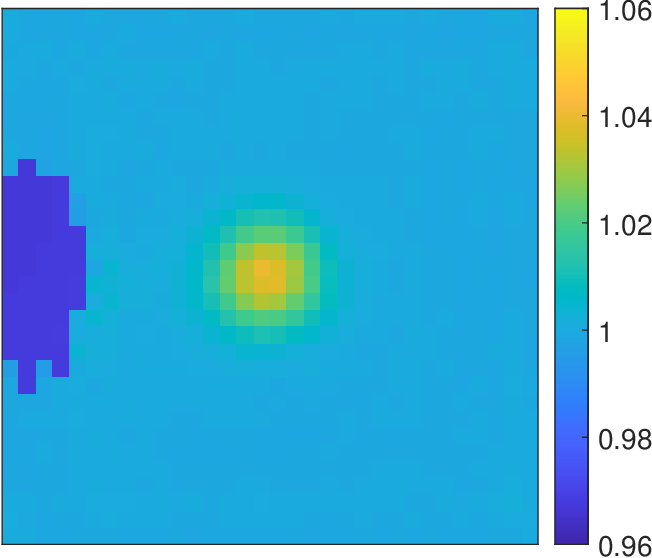} &
\includegraphics[height=1.9cm]  {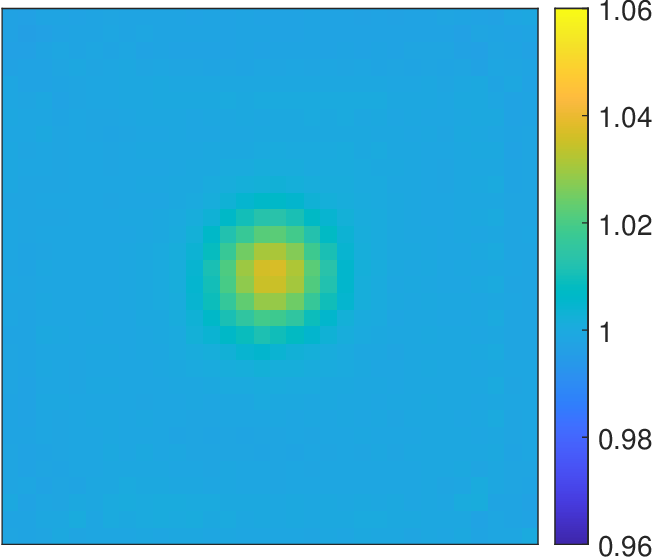} &
\includegraphics[height=1.9cm]  {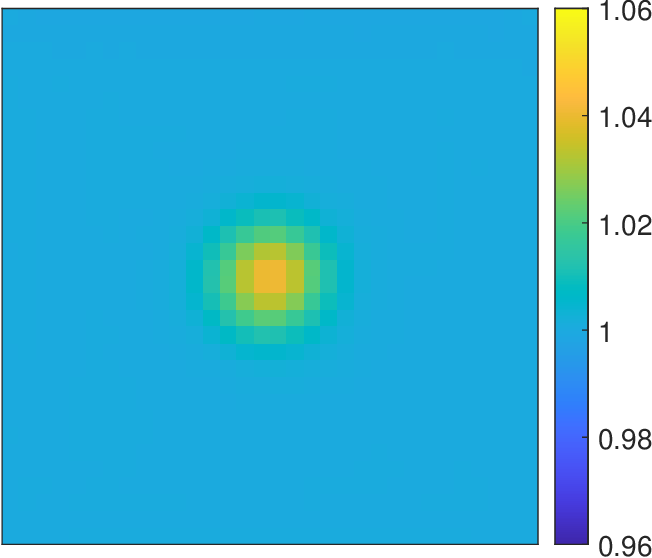} &
\includegraphics[height=1.9cm]  {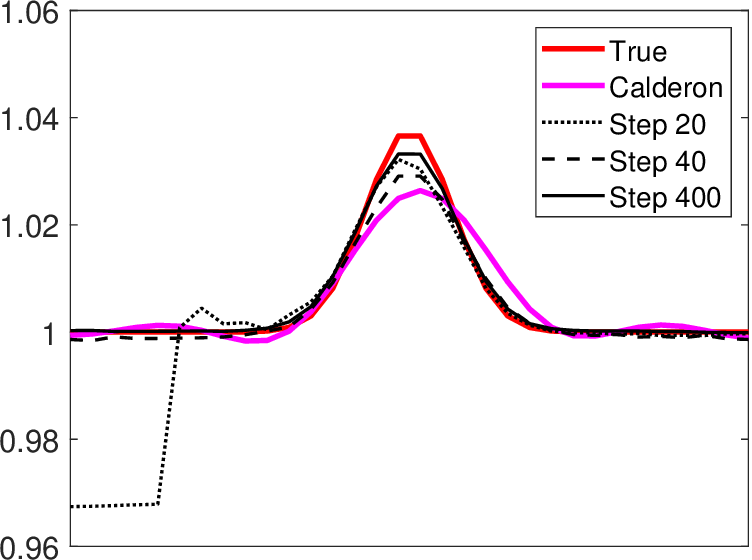} \\
\includegraphics[height=1.9cm]  {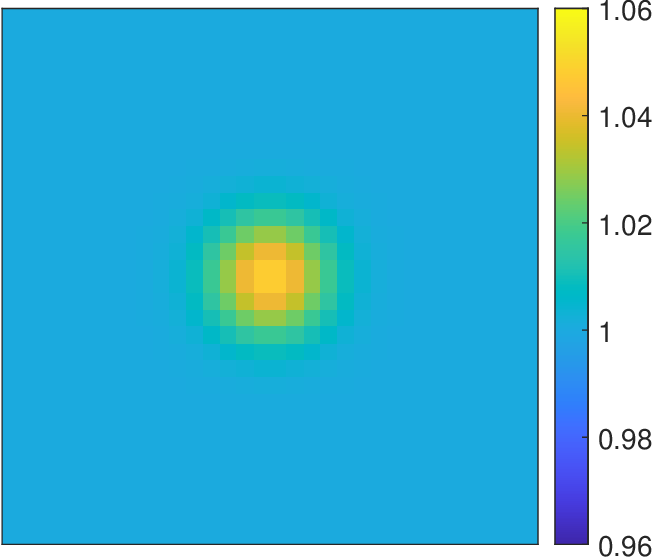} & \includegraphics[height=1.9cm]  {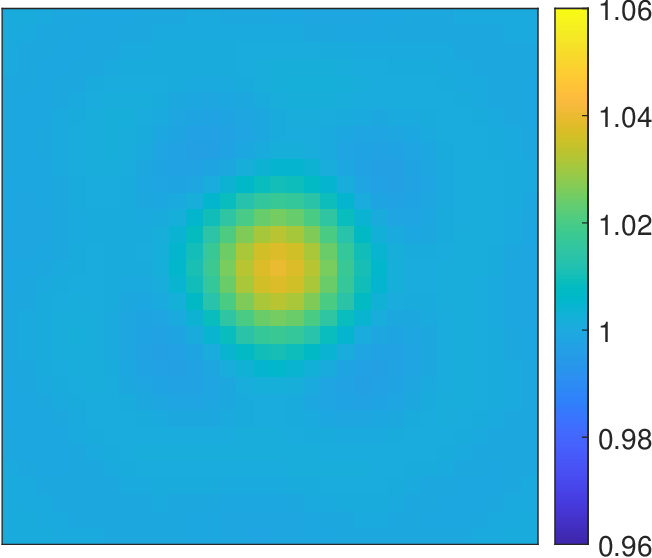} &
\includegraphics[height=1.9cm]  {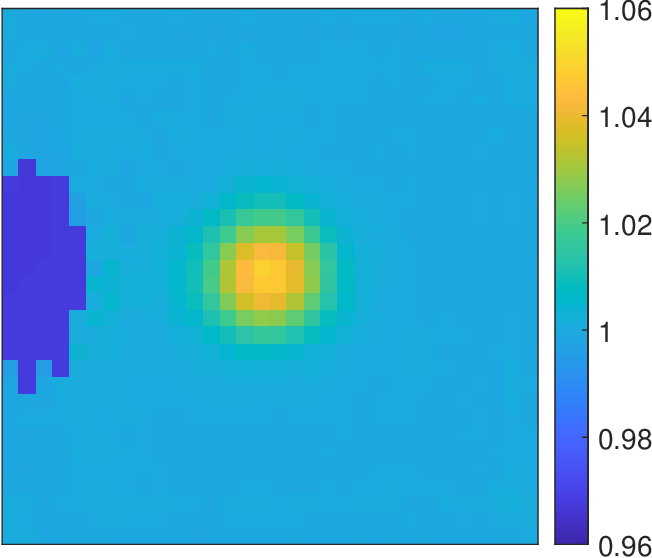} &
\includegraphics[height=1.9cm]  {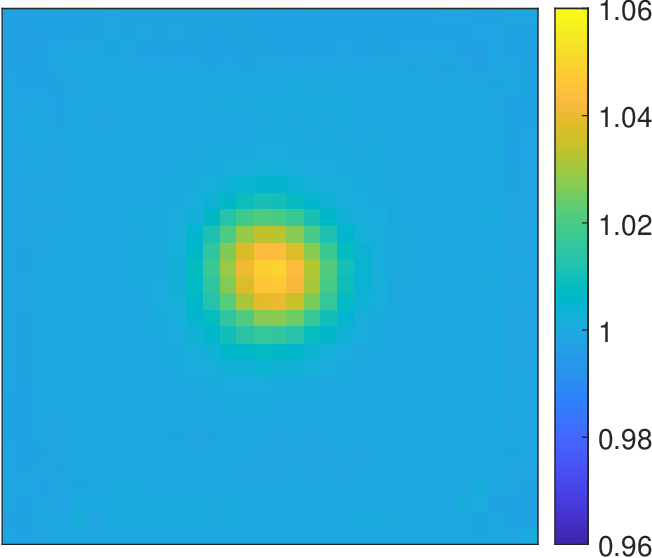} &
\includegraphics[height=1.9cm]  {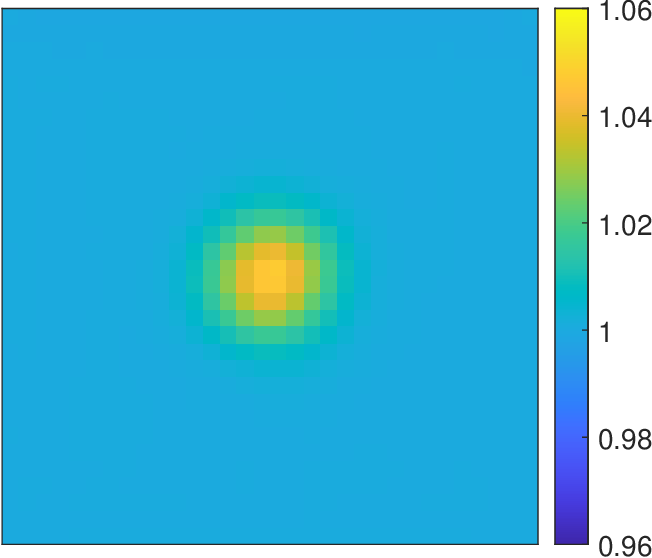} &
\includegraphics[height=1.9cm]  {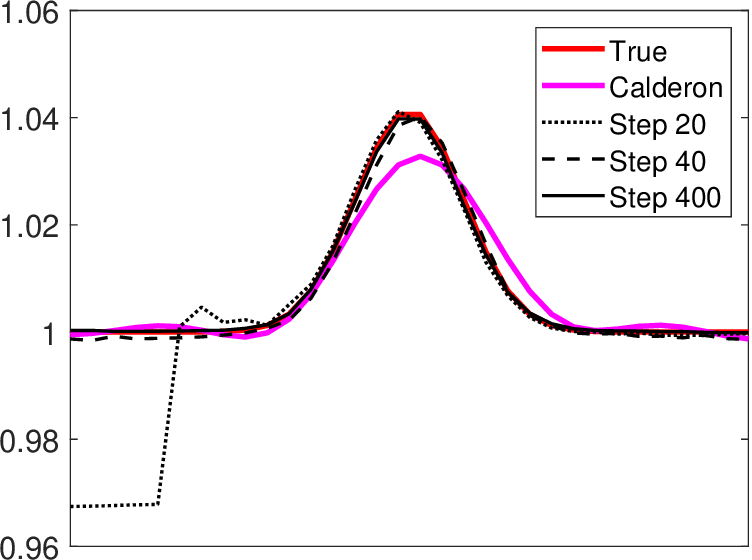}\\
\includegraphics[height=1.9cm]  {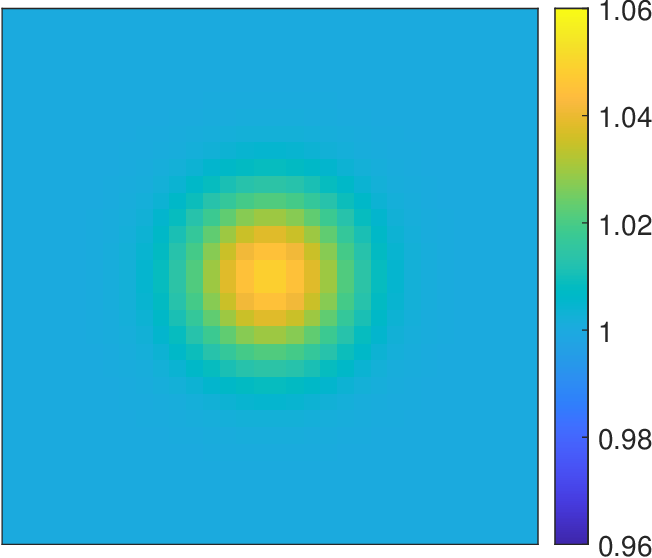} & \includegraphics[height=1.9cm]  {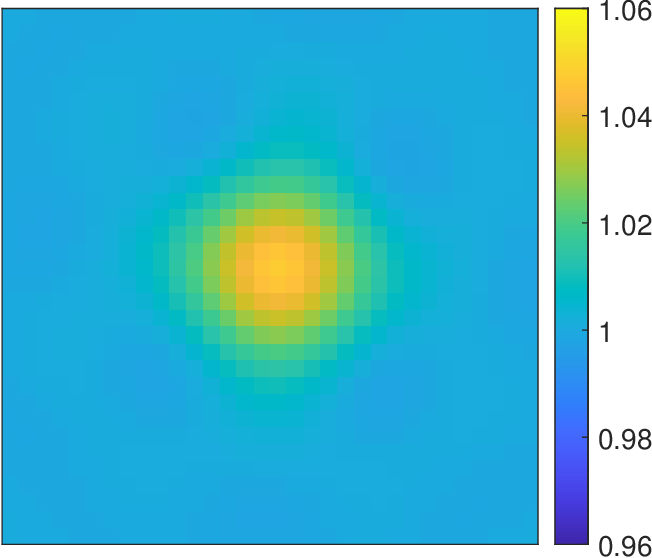} &
\includegraphics[height=1.9cm]  {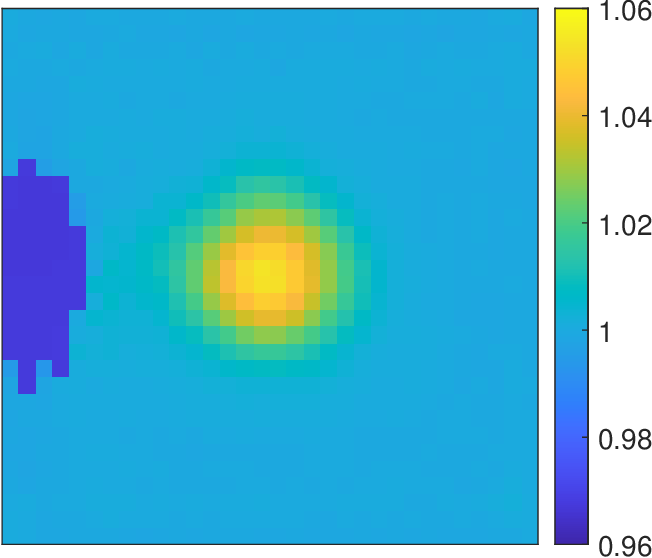} &
\includegraphics[height=1.9cm]  {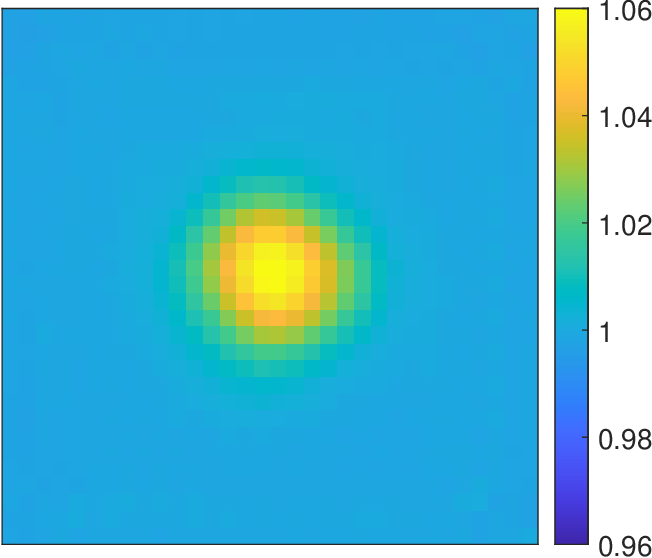} &
\includegraphics[height=1.9cm]  {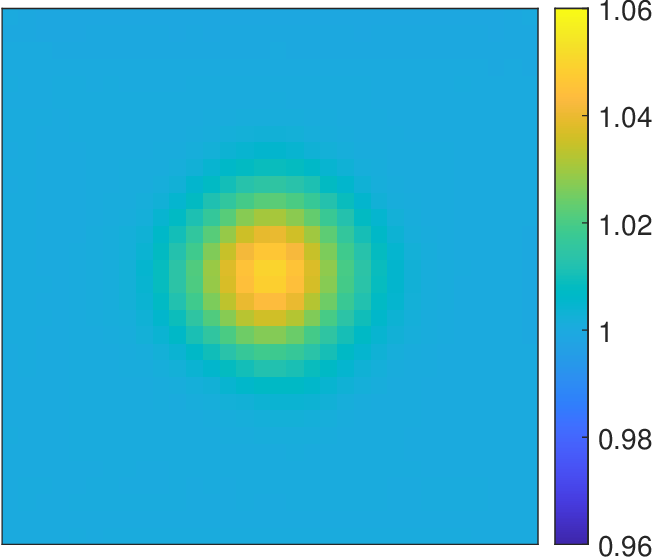} &
\includegraphics[height=1.9cm]  {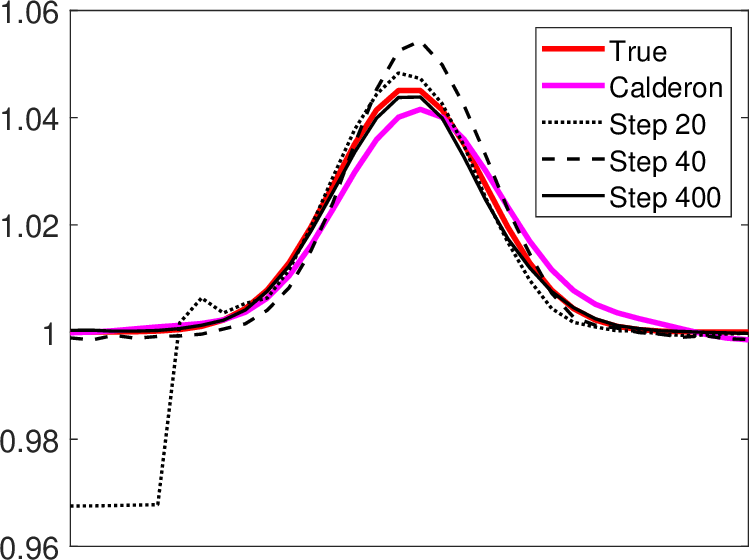}\\
\includegraphics[height=1.9cm]  {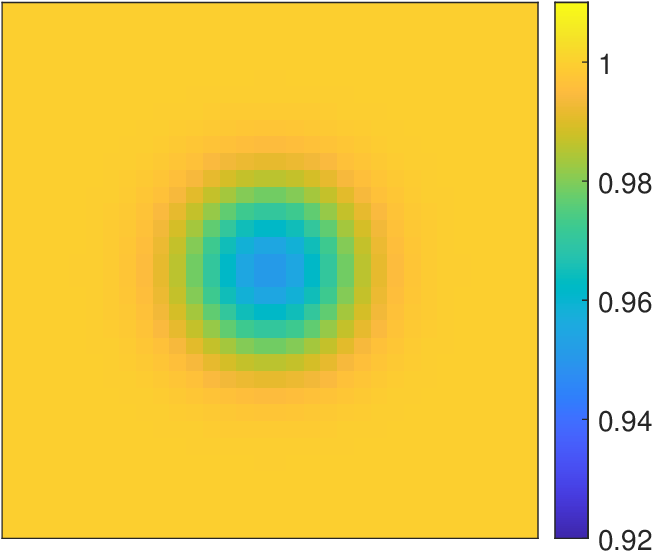} & \includegraphics[height=1.9cm]  {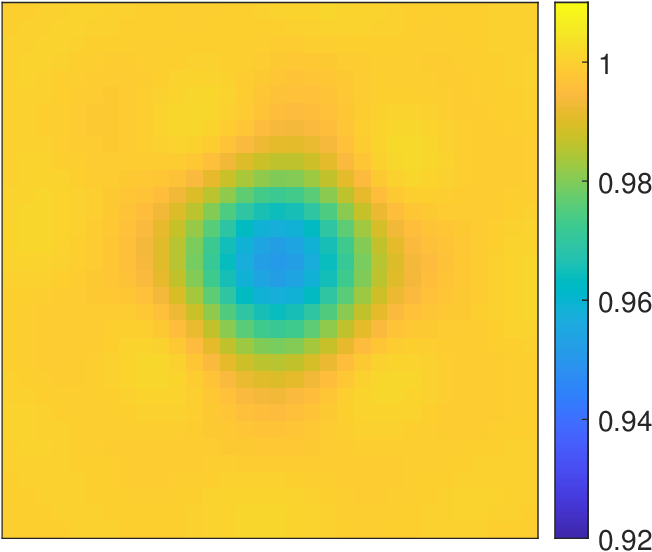} &
\includegraphics[height=1.9cm]  {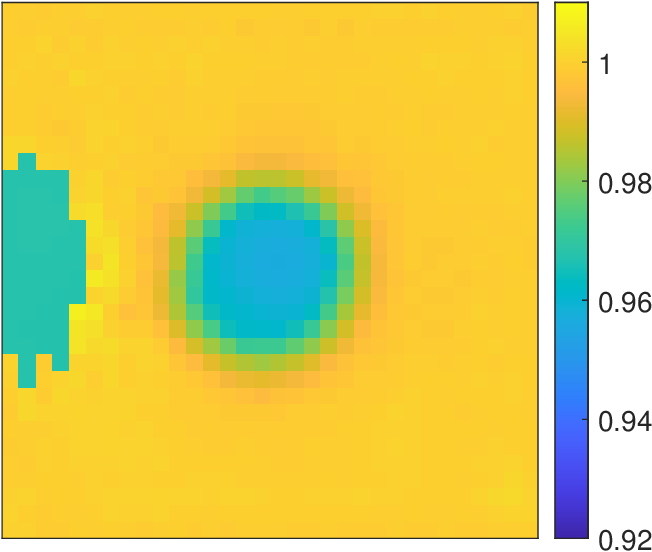} &
\includegraphics[height=1.9cm]  {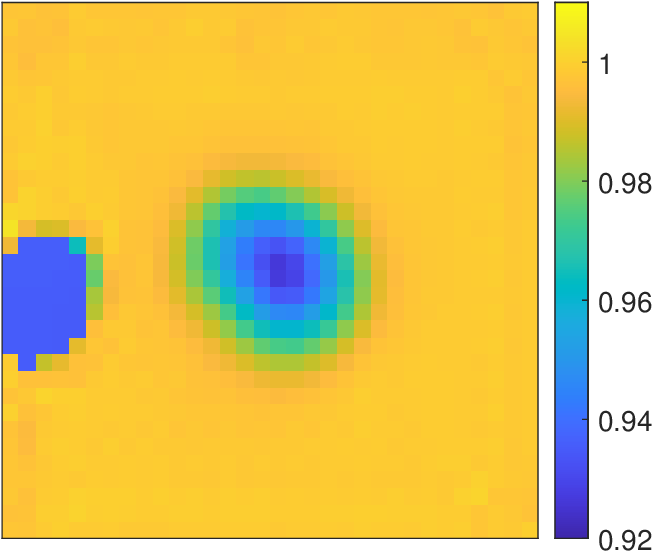} &
\includegraphics[height=1.9cm]  {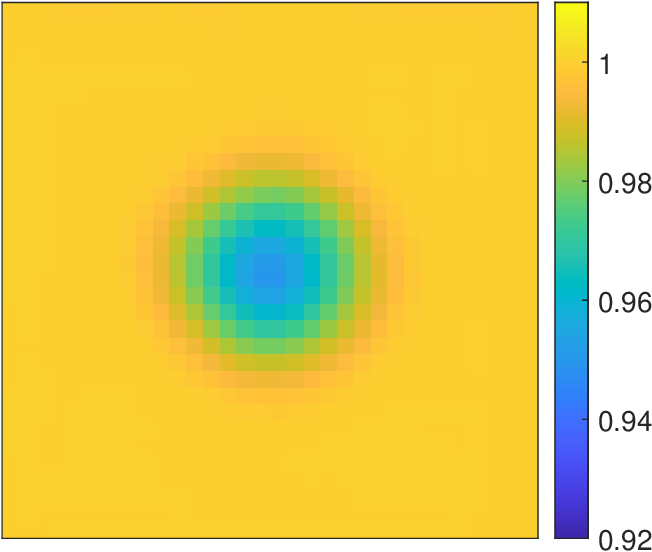} &
\includegraphics[height=1.9cm]  {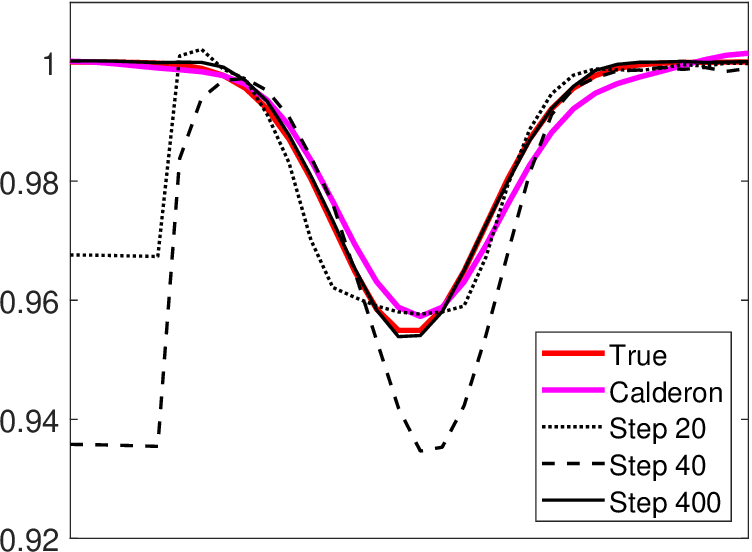} \\
        (a) true & (b) Calder\'on & (c) step 20 & (d) step 40 & (e) step 400 & (f) slice
	\end{tabular}
	\caption{\label{fig:exam1.1} The true image, Calder\'on reconstruction, deep reconstructions (after 20, 40 and 400 epochs) and slice at $x_2=0$ for Example \ref{exam1} (i), with the settings from top to bottom being 
 $(b_{t},a_t)=(60,0.05)$, $(b_t,a_t)=(40,0.05)$, $(b_t,a_t)=(20,0.05)$ and $(b_t,a_t)=(40,-0.05)$.}
\end{figure}

\begin{comment}
\begin{figure}[t]
\centering\setlength{\tabcolsep}{1pt}
\begin{tabular}{cccccc}
\includegraphics[height=1.9cm]  {ex1true4.eps} & \includegraphics[height=1.9cm]  {ex1input4.eps} &
\includegraphics[height=1.9cm]  {ex1pred4it20.eps} &
\includegraphics[height=1.9cm]  {ex1pred4it40.eps} &
\includegraphics[height=1.9cm]  {ex1pred4it400.eps} &
\includegraphics[height=1.9cm]  {ex1cutoff4.eps} \\
        (a) true & (b) Calder\'on & (c) step 20 & (d) step 40 & (e) step 400 & (f) slice
	\end{tabular}
	\caption{\label{fig:exam1.1below} The true image, Calder\'on reconstruction, deep reconstructions (after 20, 40 and 400 epochs) and slice at $x_2=0$ for Example \ref{exam1} (i), with $b_{t}=40$, $a_t=-0.05$.}
\end{figure}
\end{comment}

In case (ii), the training data distributions are nearly identical to case (i), except for the amplitude $a$ of the Gaussian. This case allows only the conductivity distribution above the background value 1. The test result after 20 epochs is shown in Fig.\ \ref{fig:exam1.2}. It is observed that the training process is much more stable and the convergence is much faster than in case (i), and the results are already quite satisfactory at the 20th epoch. In addition, the reconstructions are now free from oscillations. Fig.\ \ref{fig:exam1.2} also shows the results after 200 epochs, for which neural network training has nearly stabilized, and the trained neural network can also produce satisfactory approximations for test data in the training set. The results on the data outside the training distribution have a similar behavior to case (i), but are slightly better. Note that the recent theoretical study \cite{WaWa} has shown that to recover conductivities of a fixed sign, it is possible to achieve Lipschitz type stability. The improved stability phenomenon observed in case (ii) (e.g., without obvious oscillations in the training) are likely due to the sign condition. However, the precise mechanism requires further investigation, which we plan to pursue in future work. 

\begin{figure}[t]
	\centering\setlength{\tabcolsep}{2pt}
	\begin{tabular}{ccccc}
		\includegraphics[height=2cm]  {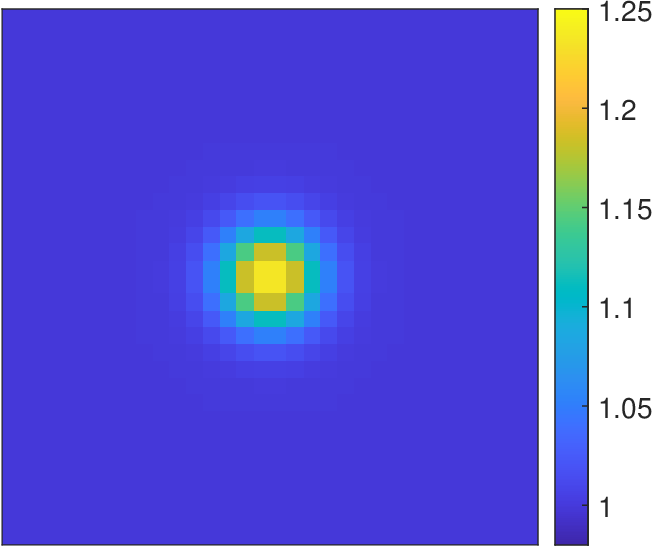} & \includegraphics[height=2cm]  {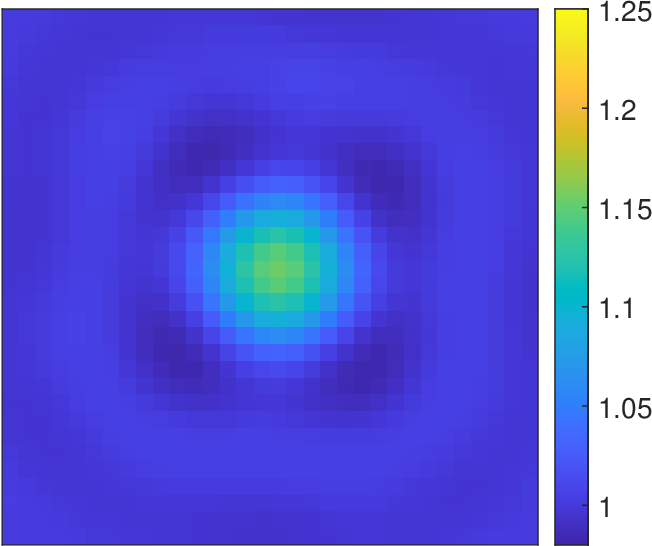} &
		\includegraphics[height=2cm]  {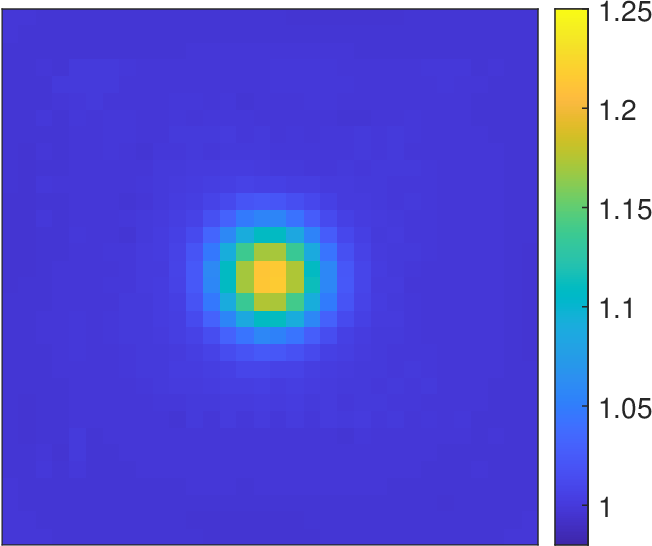} &
        \includegraphics[height=2cm]  {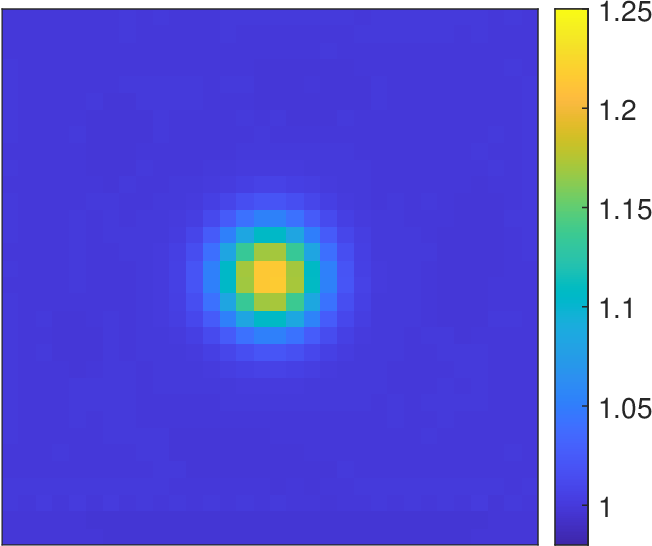} &
		\includegraphics[height=2cm]  {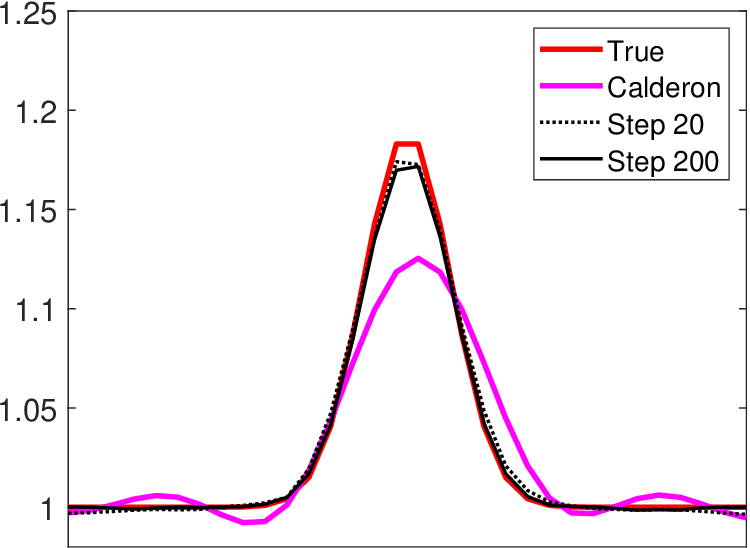} \\
		\includegraphics[height=2cm]  {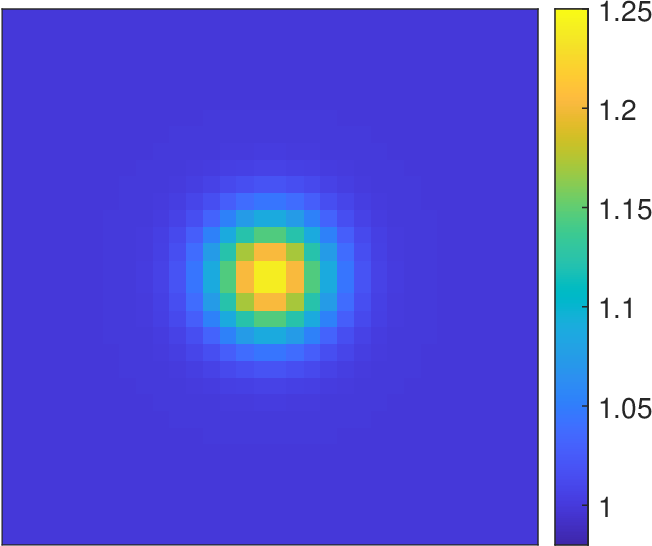} & \includegraphics[height=2cm]  {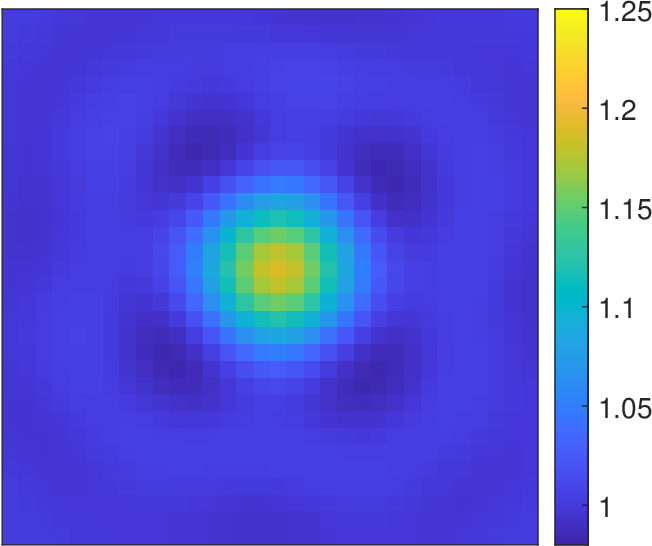} &
		\includegraphics[height=2cm]  {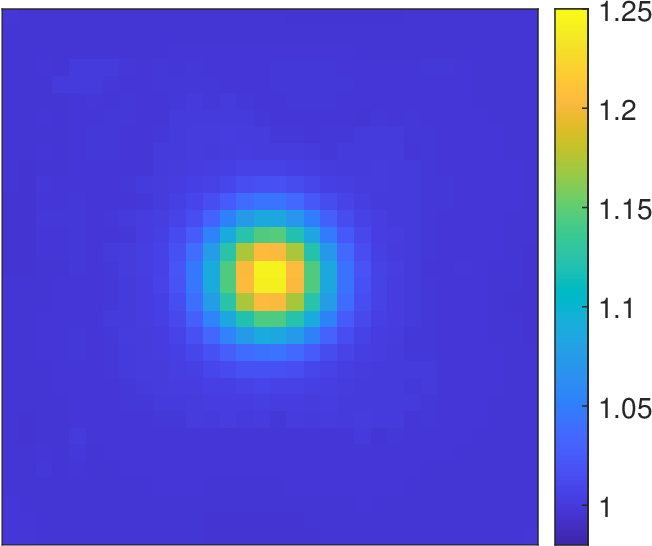} &
        \includegraphics[height=2cm]  {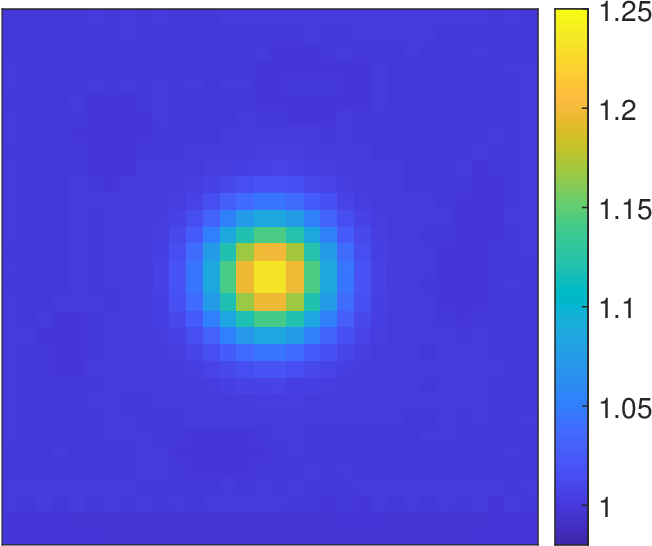} &
		\includegraphics[height=2cm]  {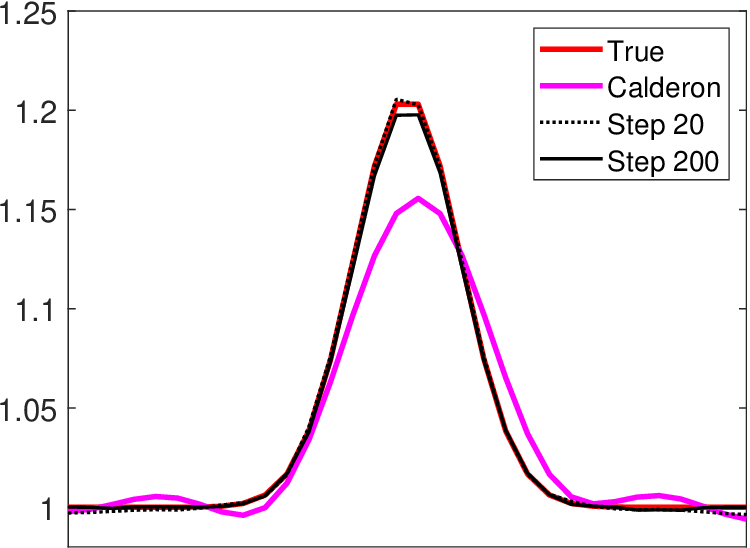}\\
		\includegraphics[height=2cm]  {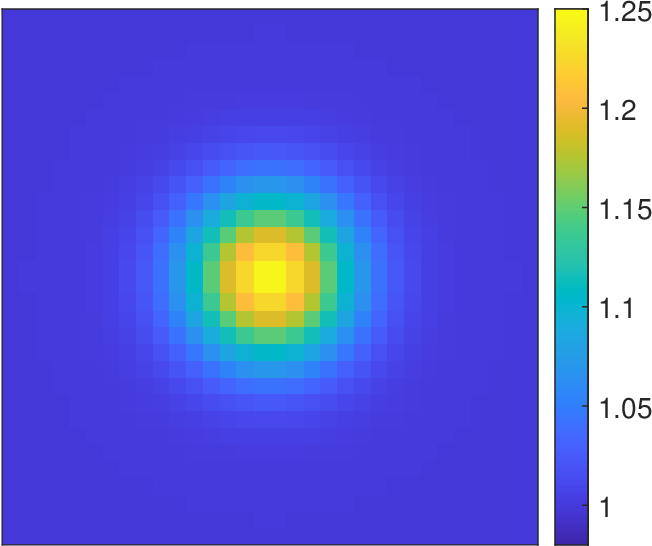} & \includegraphics[height=2cm]  {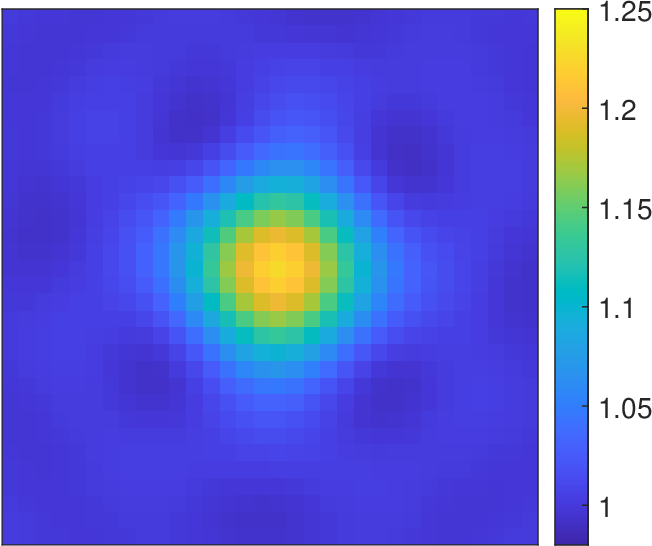} &
		\includegraphics[height=2cm]  {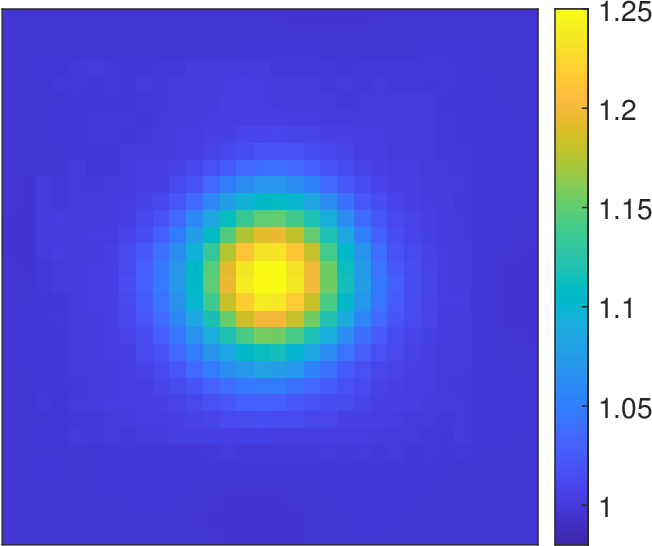} &
        \includegraphics[height=2cm]  {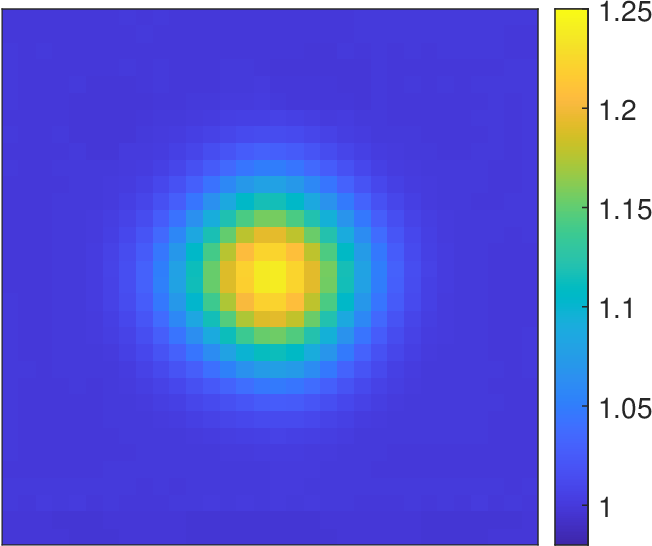} &
		\includegraphics[height=2cm]  {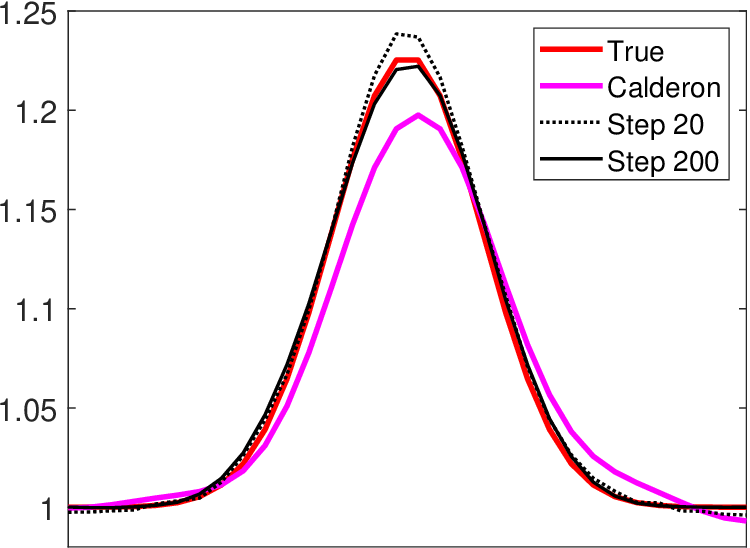}\\
        (a) true & (b) Calder\'on & (c) step 20 & (d) step 200 & (e) slice
	\end{tabular}
	\caption{\label{fig:exam1.2} The true image, Calder\'on reconstruction, deep reconstructions (after 20 and 200 epochs) and slices at $x_2=0$ for Example \ref{exam1} (ii), with $b_{t}=60$ (top), $40$ (middle), $20$ (below).}
\end{figure}

In case (iii), we test the generalization ability of the trained neural network. We fix $b=40$ and keep the distributions of $x_0$ and $a$ unchanged as in case (ii), and retrain the neural network. The test data is now given by $\delta_{t}(x)=\sum_{i=1}^na_ie^{-\frac{40|x-x_i|^2}{2}},$
with $n=2$ or 3, and $a_i\overset{\text{i.i.d.}}{\sim} U(0.2,1.2)$. In other words, the model is trained exclusively on the data from a single Gaussian but is evaluated on data drawn from the mixture of two or three  Gaussians. The relevant test results in Fig.\ \ref{fig:exam1.3} show that the reconstruction performance remains quite remarkable, which indicates that the deep Calder\'on method learns the inverse of the operator $\gamma\rightarrow\Lambda_\gamma$ rather than merely performing data fitting. Additionally, the trained neural network can correct some anisotropy, and can compensate for the errors due to the use of the isotropic Calder\'on method to solve anisotropic problems.

\begin{figure}[t]
	\centering\setlength{\tabcolsep}{2pt}
	\begin{tabular}{cccc}
		\includegraphics[height=2.2cm]  {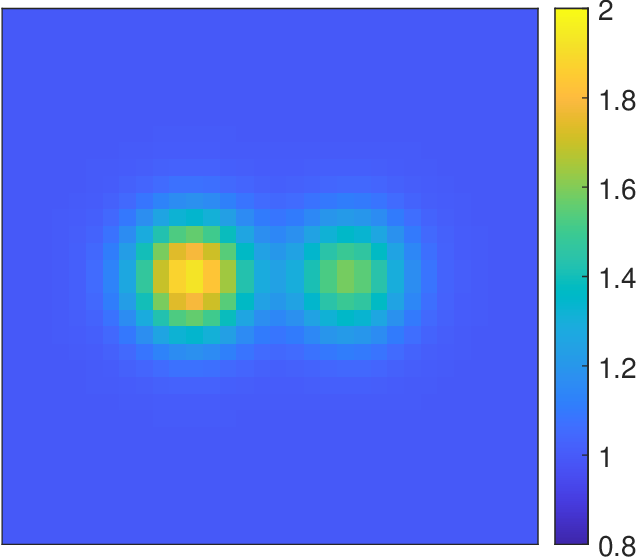} & \includegraphics[height=2.2cm]  {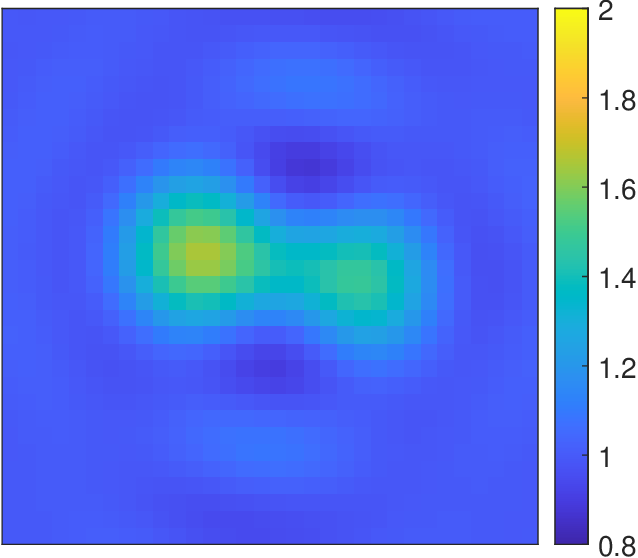} &
		\includegraphics[height=2.2cm]  {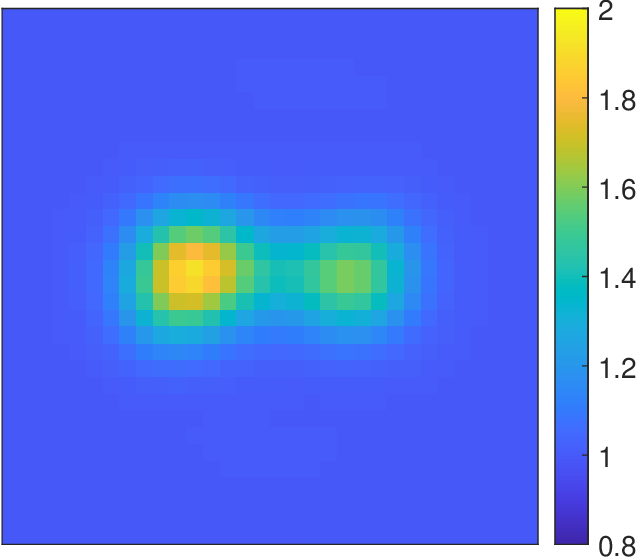} &
		\includegraphics[height=2.2cm]  {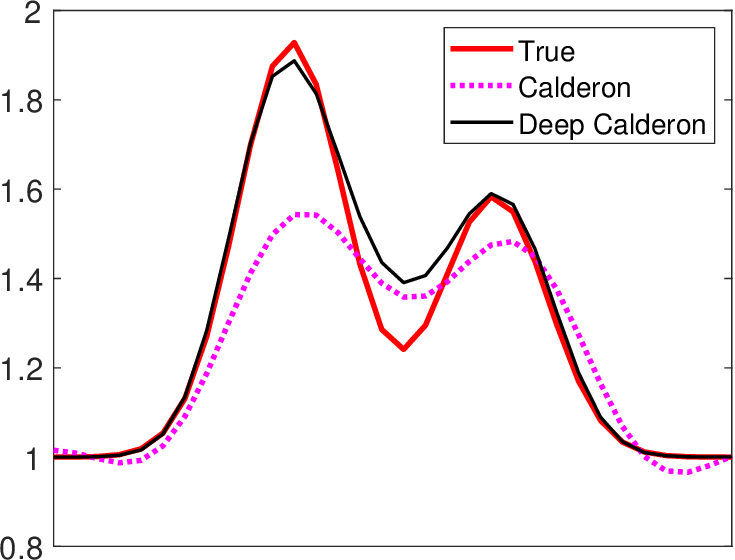} \\
        \includegraphics[height=2.2cm]  {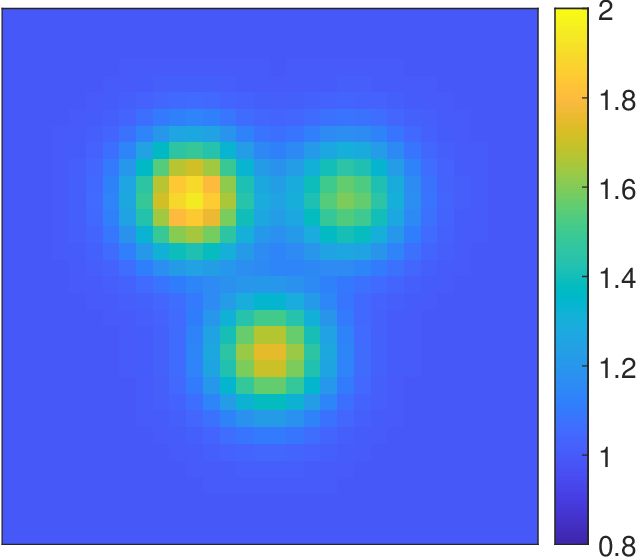} & \includegraphics[height=2.2cm]  {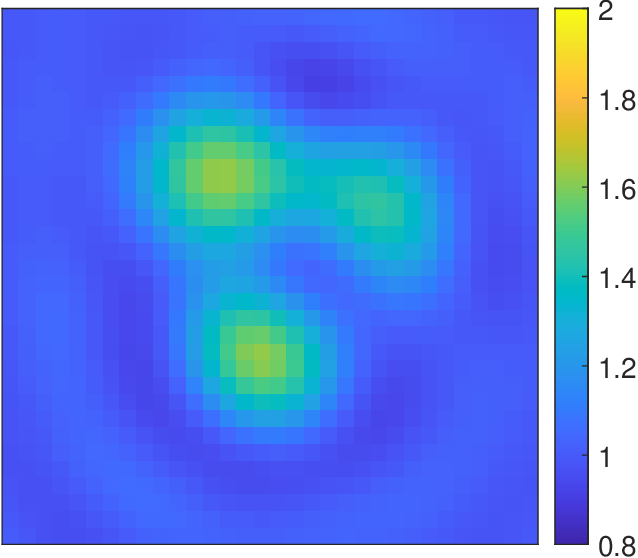} &
		\includegraphics[height=2.2cm]  {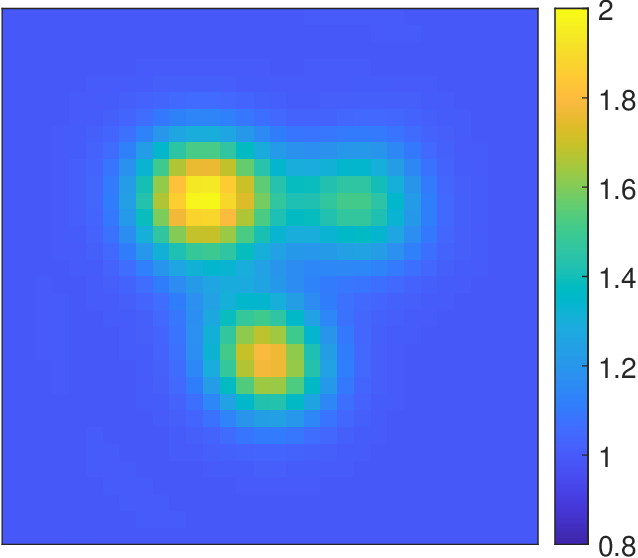} &
		\includegraphics[height=2.2cm]  {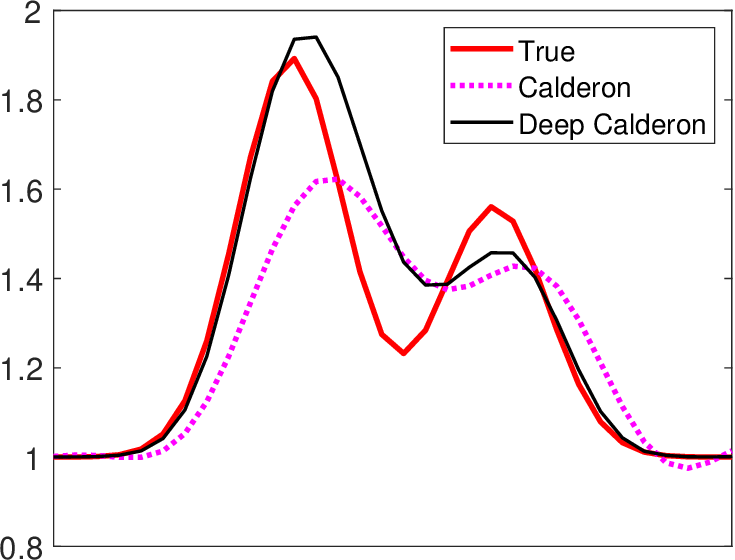}\\
        (a) true & (b) Calder\'on & (c) Deep Calder\'on  & (d) slice
	\end{tabular}
	\caption{\label{fig:exam1.3} The true image, Calder\'on reconstruction, deep reconstructions and slice at $x_2=0$ (2 Gaussian case) and $x_2\approx0.3$ (3 Gaussian case) for Example \ref{exam1} (iii) with the centers at: $(-0.3,0)$ and $(0.3,0)$ (top) and $(-0.3,0)$, $(0.3,0)$ and $(0,-0.3)$ (bottom).}
\end{figure}

Finally, we show the dynamics of the loss for the three cases in Fig.\ \ref{fig:exam1loss}. The result shows that as the training tasks become easier, the value of the loss function decreases faster accordingly, and the fluctuation in the test error also diminishes. However, the final values of the loss are close in the three cases, around $10^{-5}$. This finding aligns well with the preceding observations.

\begin{figure}[t]
	\centering\setlength{\tabcolsep}{2pt}
	\begin{tabular}{ccc}
		\includegraphics[height=3.7cm]  {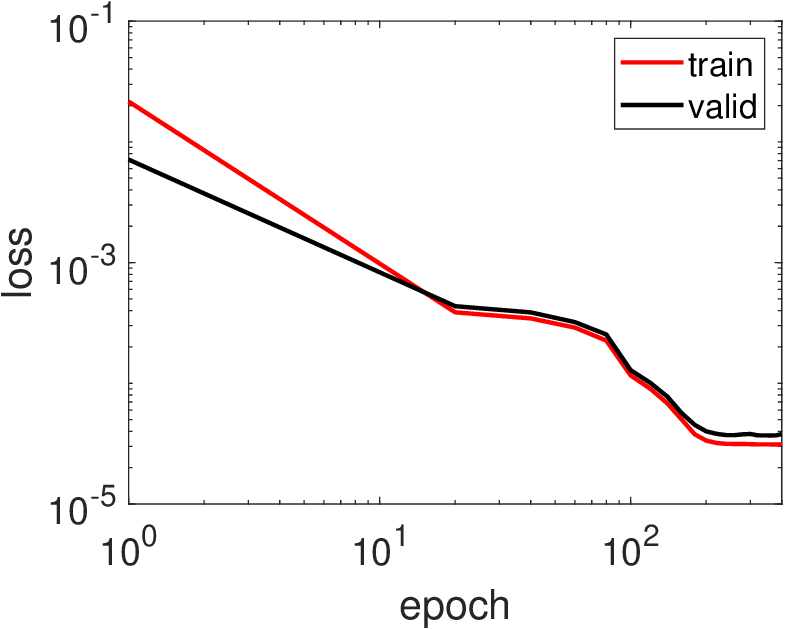} & \includegraphics[height=3.7cm]  {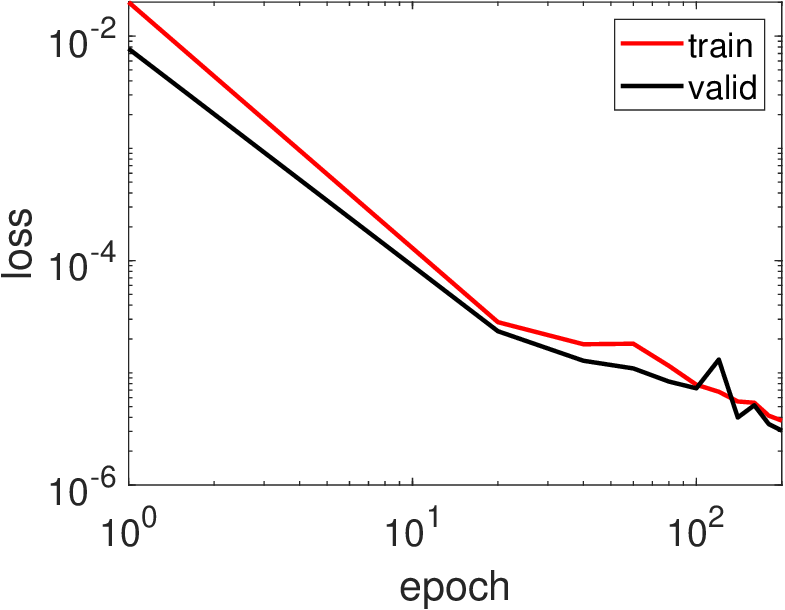} &
		\includegraphics[height=3.7cm]  {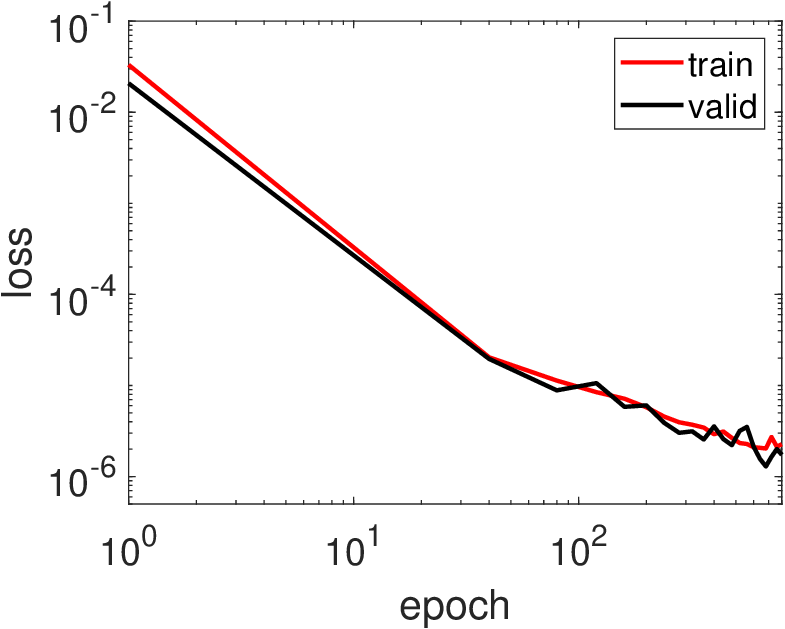} \\
        (a) (i) & (b) (ii) & (c) (iii)
	\end{tabular}
	\caption{\label{fig:exam1loss} The dynamics of the training error and validation loss for Example \ref{exam1}.}
\end{figure}

\begin{example}\label{exam2}
Consider conductivity perturbations with one characteristic function which is a ball centered at the origin. This example is taken from \cite{Mur}. Select the conformal factor $A$ and the discontinuous isotropic conductivity $\delta(x)$ as
\begin{equation*} 
A=\begin{pmatrix}
0.3&0\\0&0
\end{pmatrix}\quad \mbox{and}\quad \delta(x)=\alpha\chi_{\{x:|x-x_0|<\beta\}},
\end{equation*}
where $\chi_S$ denotes the characteristic function of the set $S$. To validate the effectiveness of the method, we use three cases and train the model for each case separately:
\begin{itemize}
    \item[{\rm(i)}] $\alpha\sim U(-0.5,0.5)$, $\beta\in(0.2,0.4)$ and $x_0\sim U(-0.5,0.5)^2$;
    \item[{\rm(ii)}] $\alpha\sim U(0.2,1.2)$, $\beta\in(0.2,0.4)$ and $x_0\sim U(-0.5,0.5)^2$;
    \item[{\rm(iii)}] $\alpha\sim U(0.2,1.2)$, $\beta=0.2$ and $x_0\sim U(-0.5,0.5)^2$.
\end{itemize}
Similar to Example \ref{exam1}, we focus on stability questions in cases {\rm(i)} and {\rm(ii)}. Case {\rm(iii)} is designed to test generalization ability. For each case, we show the neural network reconstruction at different stages of training on the test data with 
$\delta_t(x)=\alpha_{t}\chi_{\{x:|x-x_0|<\beta_{t}\}}$, with $\beta_{t}\in\{0.3,0.5,0.7\}$.
\end{example}

Like in Example \ref{exam1}, for each of the settings (i)--(iii), we first generate 2,300 training data points, then use 2,000 of them for training the U-net and the remaining 300 pairs for validation. The truncation radius $R$ of the Calder\'on method and training details are summarized in Table \ref{table:exam2}.

\begin{table}[hbt!]
\centering
\begin{threeparttable}
\caption{\label{table:exam2} The truncation radius $R$ for the Calder\'on method and the training details for Example~\ref{exam2}.}
\centering
\begin{tabular}[5pt]{c|c|c|c|c|c}
\toprule
Case & $R$ &training data & validation data &  learning rate & training time (min)\\ 
\midrule
(i) & 1.2 & 2,000 & 300 & $5\times10^{-4}$ & 21\\
(ii) & 1.2 & 2,000 & 300 & $1\times10^{-3}$ & 11\\
(iii) & 1.2 & 2,000 & 300 & $1\times10^{-3}$ & 18\\
\bottomrule
\end{tabular}
\end{threeparttable}
\end{table}

In case (i), we show the neural network reconstructions at different stages of training in Fig.\ \ref{fig:exam2.1}. 
First, we show the results of the neural network at the 100th epoch  in Fig.\ \ref{fig:exam2.1} (column 3). The plot and the slice show clear oscillations for the U-net training results. This is related to the fact that the training data parameter $\alpha$ can be either positive or negative. This oscillatory phenomenon occurs regardless of whether the support of $\delta$ is large or small, and persists throughout almost the entire region. In sharp contrast, if $\alpha$ contains only positive values as in case (ii), then such a phenomenon does not happen. When the training continues to 200 epochs, the oscillation phenomenon has eased to some extent, but still persists. When reaching 800 epochs, the training has nearly converged: for data within the training set, the method can provide good predictions, while for radius $b$ outside the training set, the reconstruction is still good for $\beta_t = 0.3$ (small), but becomes noticeably worse for $\beta_t = 0.7$ (large). These findings agree well with the analysis in Section \ref{sec:trainingset}. 
The oscillations at this stage have become very slight, but still exist. Since this involves a non-smooth function, the overall training process is more complex than in the Gaussian case shown in Example \ref{exam1} and requires more epochs to reach convergence. Moreover, the instability during training becomes more pronounced. We  present the relative $L^1(\Omega)$ and $L^2(\Omega)$ errors in Table \ref{table:exam1.1}. So far the test samples are only mildly beyond the training set. To further demonstrate the extent of generalization capability of the trained neural network, we also conduct one test with samples far outside the training set, i.e., $(\beta_{t},\alpha_t)=(0.8,1.05)$, for which the results are also worse; see the last row of Fig. \ref{fig:exam2.1}. This is also confirmed by the quantitative results presented in Table \ref{table:exam1.1}: the relative errors of the reconstructions are substantially larger.

\begin{figure}[htbp]
	\centering\setlength{\tabcolsep}{2pt}
	\begin{tabular}{cccccc}
		\includegraphics[height=1.9cm]  {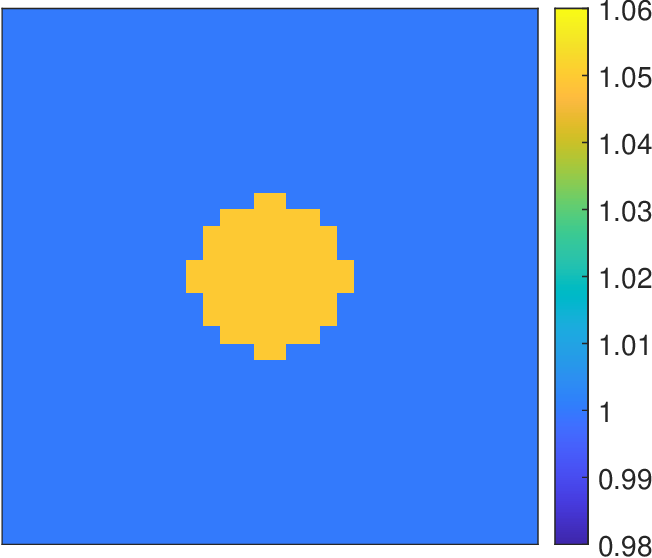} & \includegraphics[height=1.9cm]  {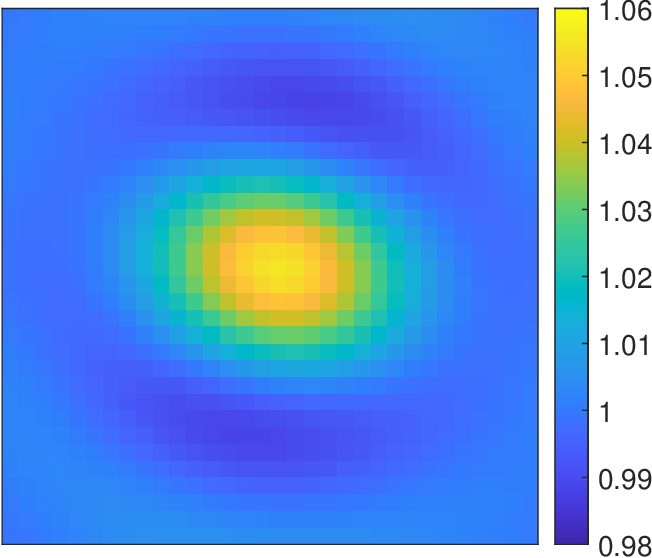} &
		\includegraphics[height=1.9cm]  {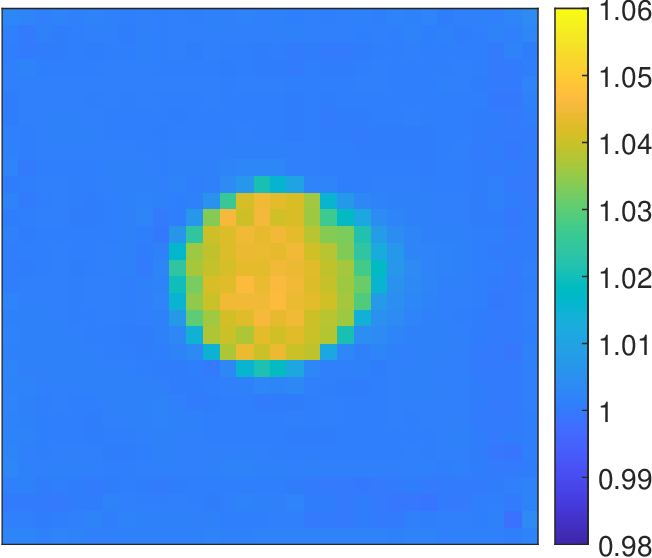} &
        \includegraphics[height=1.9cm]  {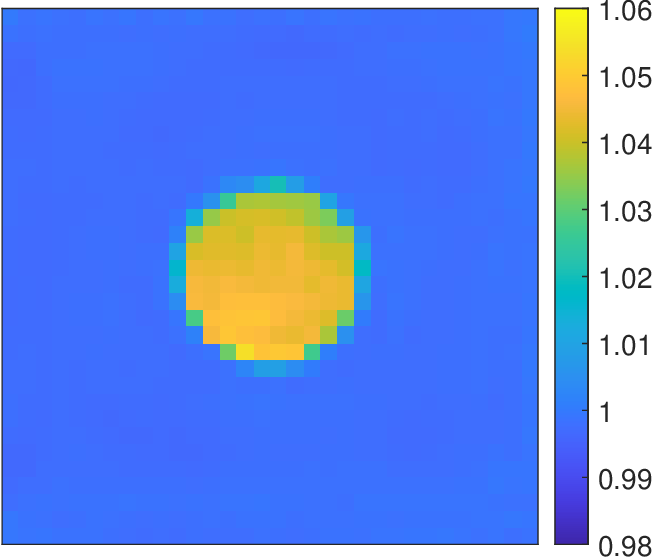} &
        \includegraphics[height=1.9cm]  {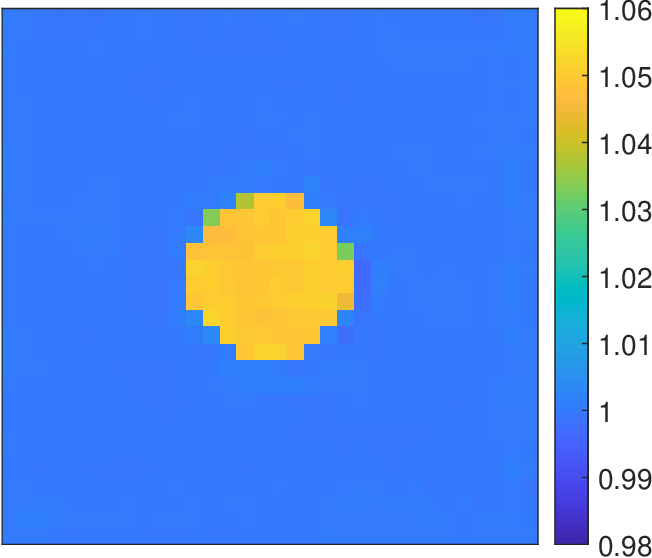} &
		\includegraphics[height=1.9cm]  {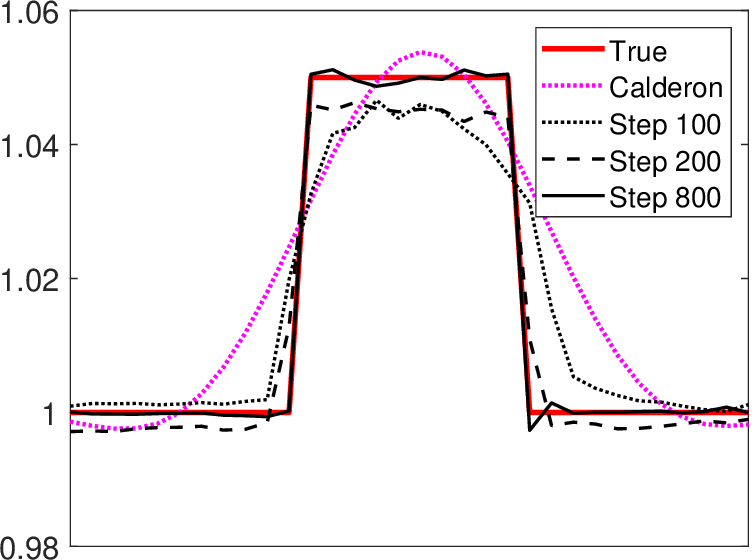} \\
		\includegraphics[height=1.9cm]  {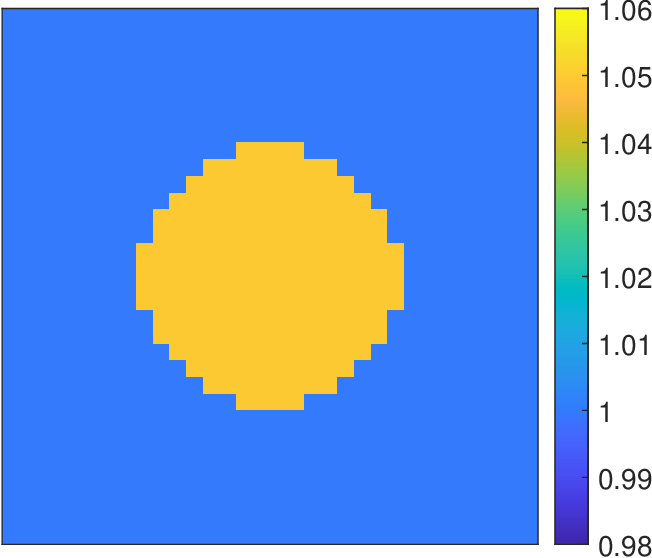} & \includegraphics[height=1.9cm]  {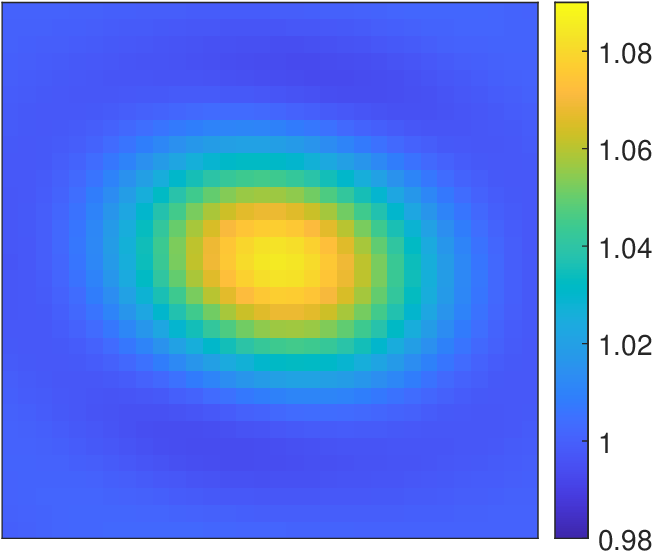} &
		\includegraphics[height=1.9cm]  {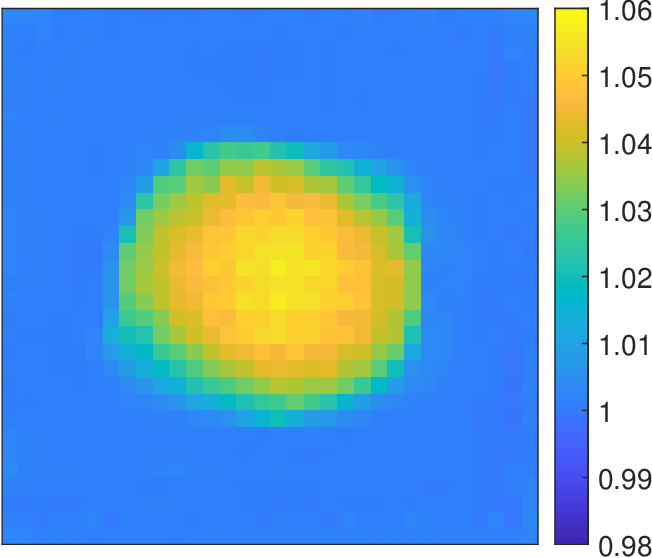} &
        \includegraphics[height=1.9cm]  {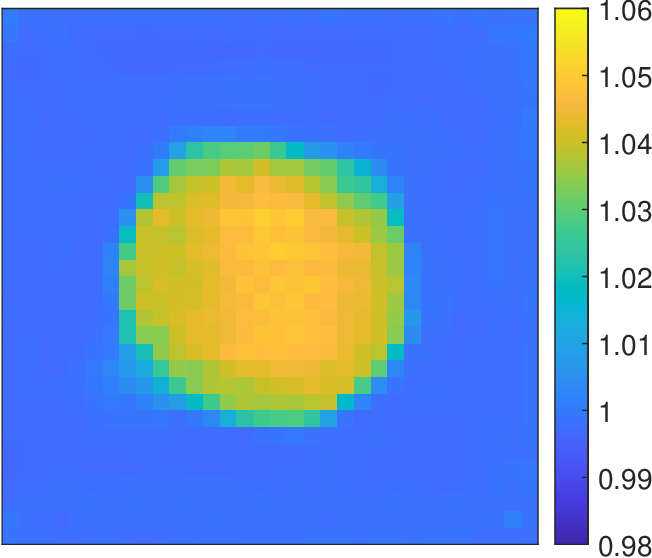} &
        \includegraphics[height=1.9cm]  {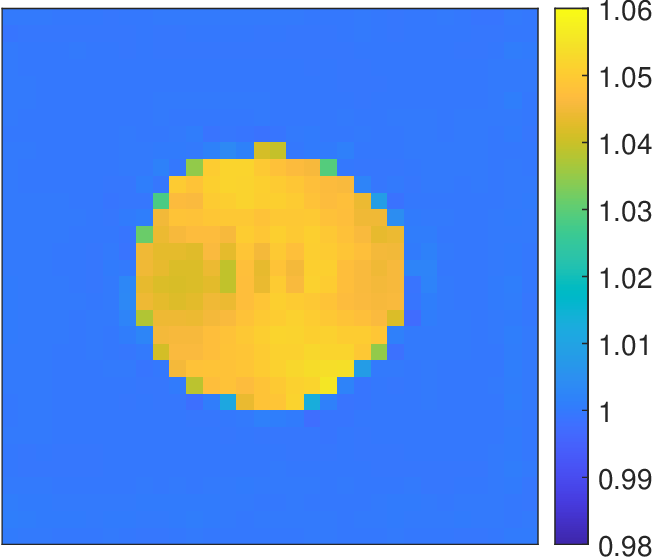} &
		\includegraphics[height=1.9cm]  {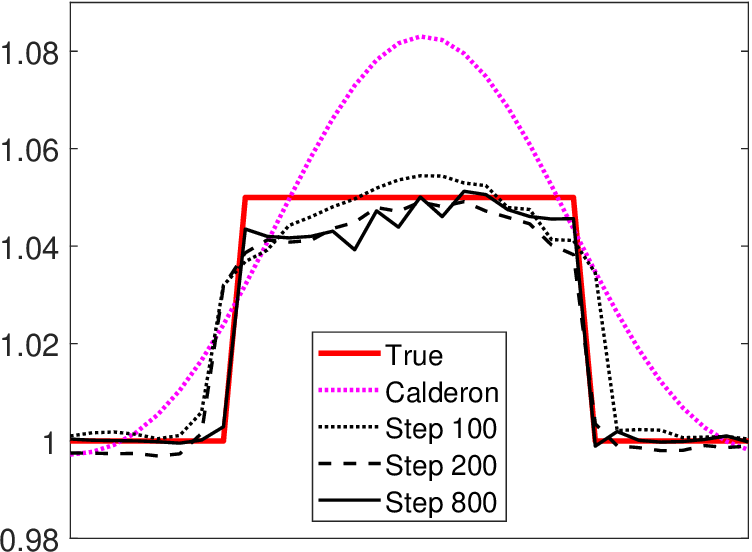}\\
		\includegraphics[height=1.9cm]  {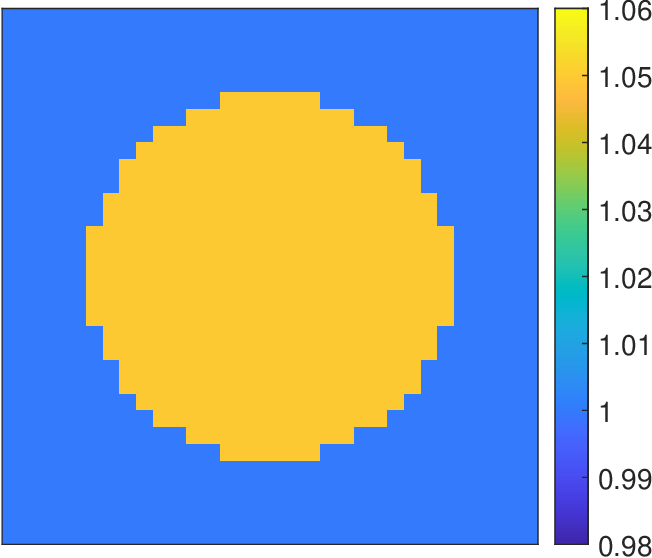} & \includegraphics[height=1.9cm]  {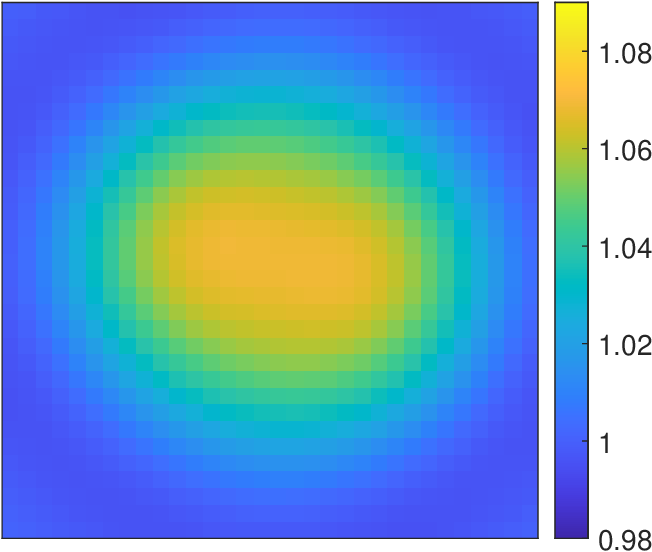} &
		\includegraphics[height=1.9cm]  {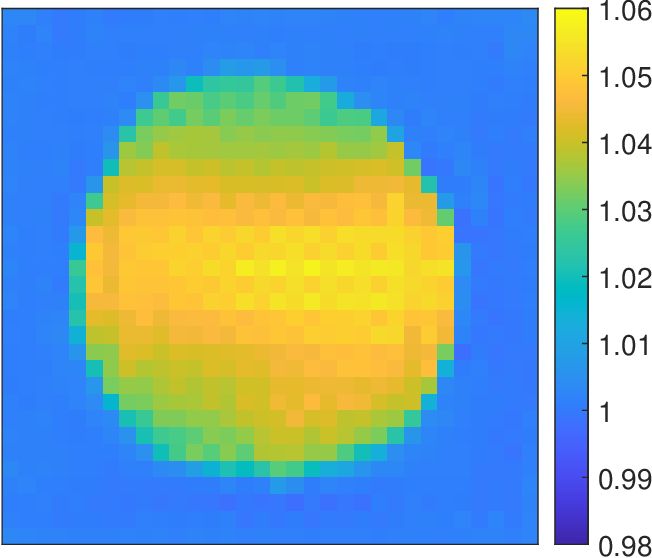} &
        \includegraphics[height=1.9cm]  {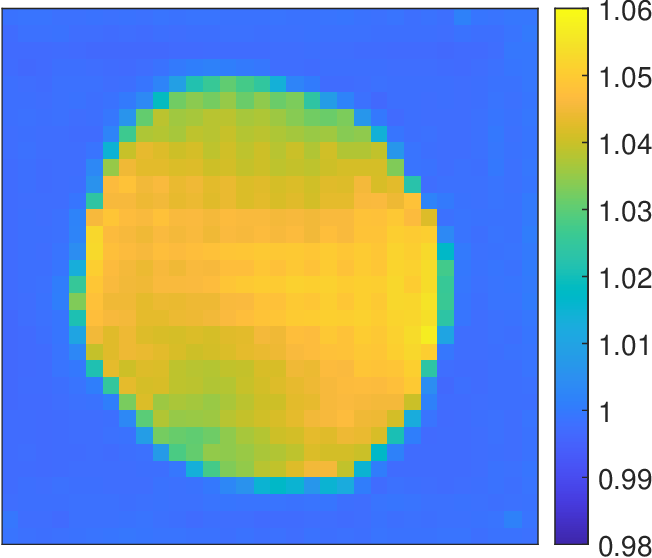} &
        \includegraphics[height=1.9cm]  {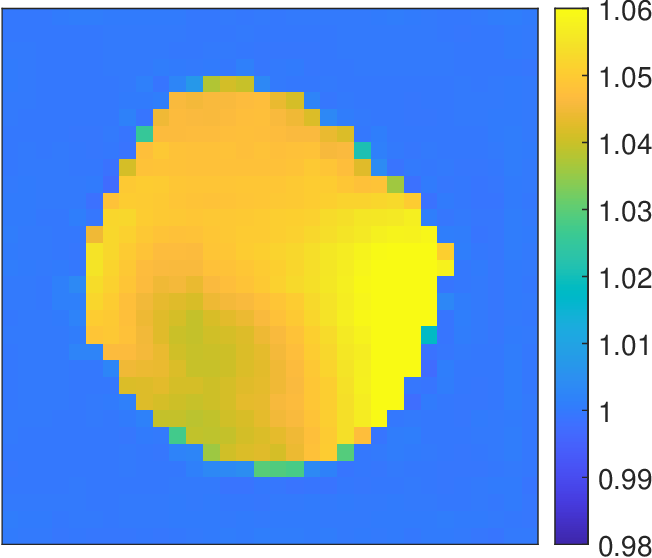} &
		\includegraphics[height=1.9cm]  {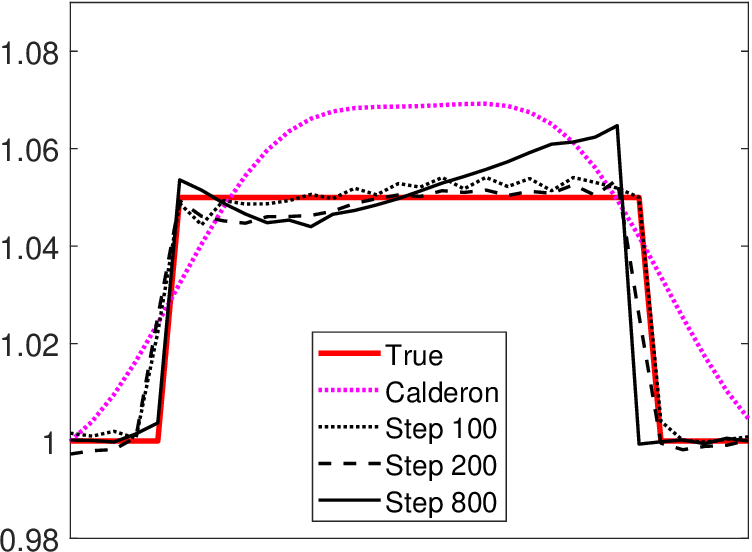}\\
		\includegraphics[height=1.9cm]  {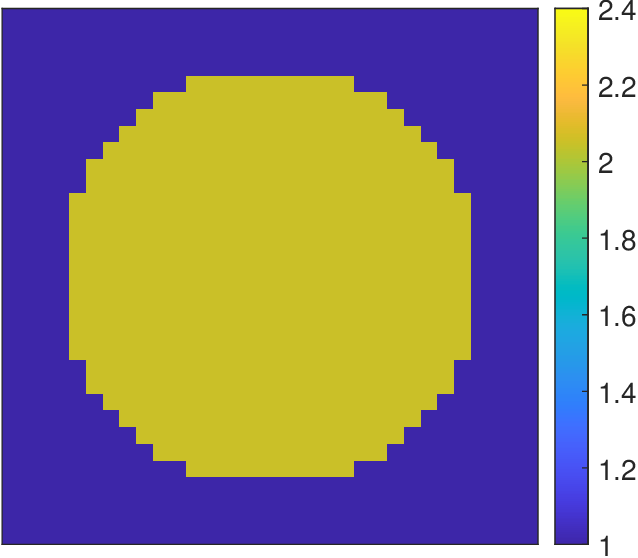} & \includegraphics[height=1.9cm]  {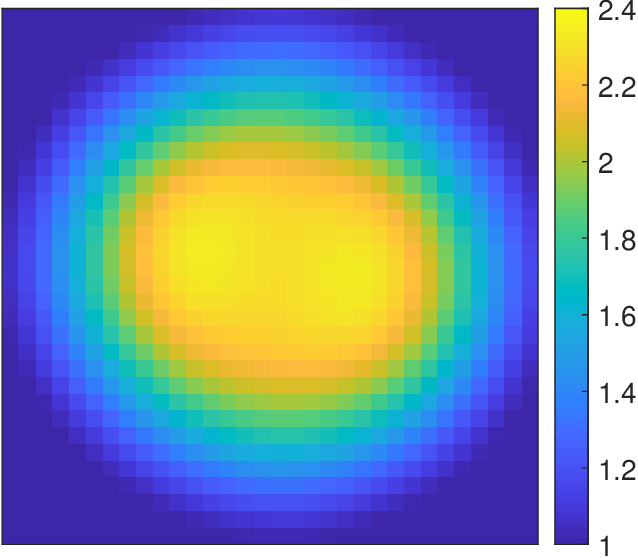} &
		\includegraphics[height=1.9cm]  {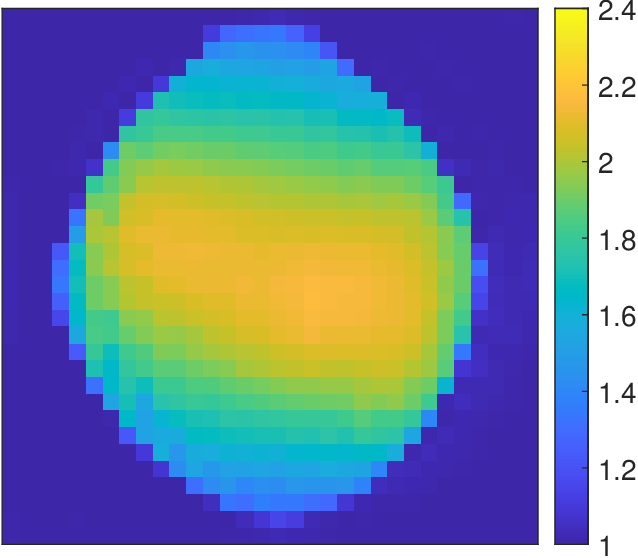} &
        \includegraphics[height=1.9cm]  {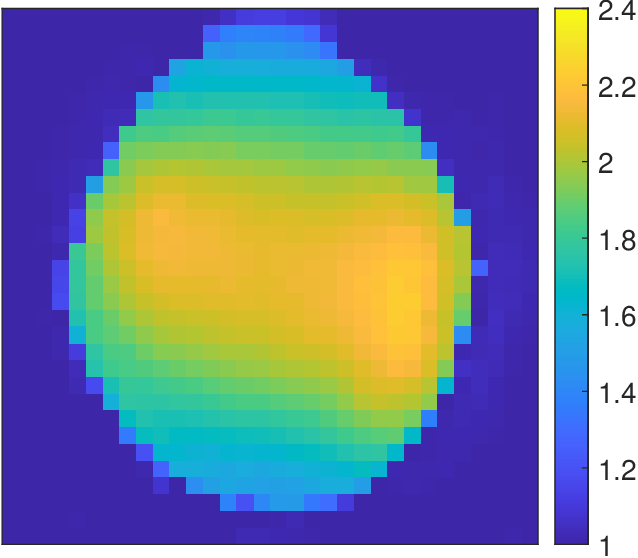} &
        \includegraphics[height=1.9cm]  {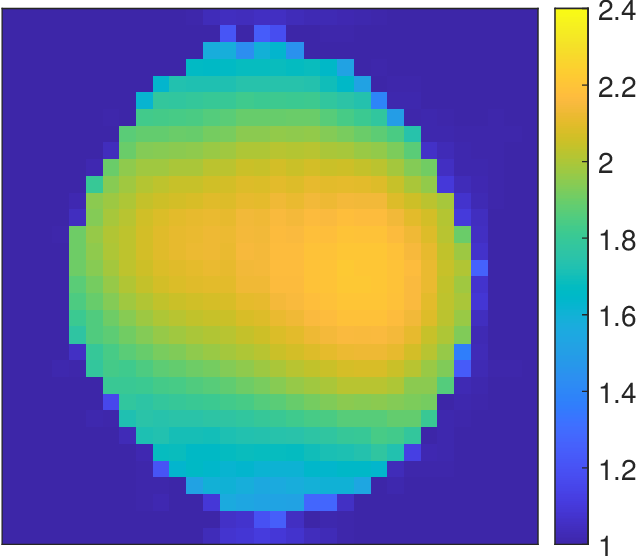} &
		\includegraphics[height=1.9cm]  {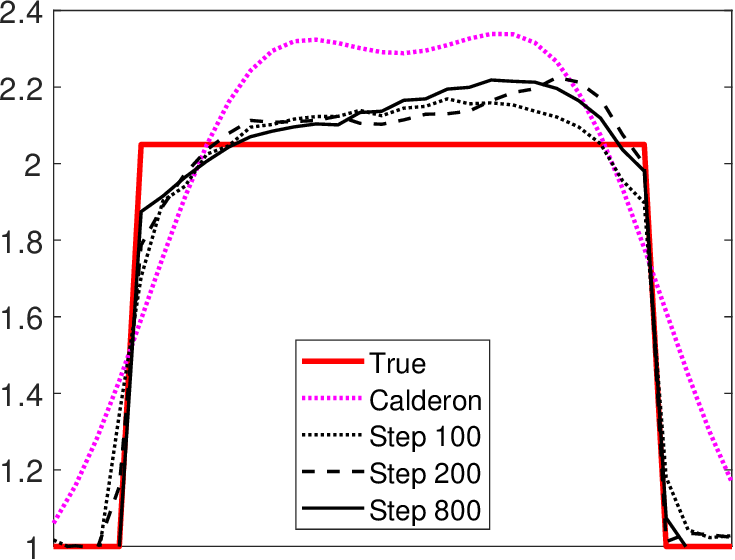} \\
        (a) true & (b) Calder\'on & (c) step 100 & (d) step 200 & (e) step 800 & (f) slice
	\end{tabular}
	\caption{\label{fig:exam2.1} The true image, Calder\'on reconstruction, deep  reconstructions (after 100, 200 and 800 epochs) and slice at $x_2=0$ for Example \ref{exam2}(i), with the settings from the top to the bottom being $(\beta_{t},\alpha_t)=(0.3,0.05)$, $(\beta_t,\alpha_t)=(0.5,0.05)$, $(\beta_t,\alpha_t)=(0.7,0.05)$, and $(\beta_t,\alpha_t)=(0.8,1.05)$.}
\end{figure}

\begin{comment}
\begin{figure}[htbp]
	\centering\setlength{\tabcolsep}{2pt}
	\begin{tabular}{cccccc}
		\includegraphics[height=1.9cm]  {ex3true4.eps} & \includegraphics[height=1.9cm]  {ex3input4.eps} &
		\includegraphics[height=1.9cm]  {ex3pred4it100.eps} &
        \includegraphics[height=1.9cm]  {ex3pred4it200.eps} &
        \includegraphics[height=1.9cm]  {ex3pred4it800.eps} &
		\includegraphics[height=1.9cm]  {ex3cutoff4.eps} \\
        (a) true & (b) Calder\'on & (c) step 100 & (d) step 200 & (e) step 800 & (f) slice
	\end{tabular}
	\caption{\label{fig:exam2.1out} The true image, Calder\'on reconstruction, deep  reconstructions (after 100, 200 and 800 epochs) and slice at $x_2=0$ for Example \ref{exam2}, with $\beta_{t}=0.8$, $\alpha_t=1.05$.}
\end{figure}
\end{comment}

\begin{comment}
\begin{table}[hbt!]
\centering
\begin{threeparttable}
\caption{\label{table:exam2.1} The relative error of the reconstructions for for Example~\ref{exam2}(i).}
\centering
\begin{tabular}[5pt]{c|c|c}
\toprule
$(\beta_t,\alpha_t)$ & $L^1(\Omega)$ & $L^2(\Omega)$ \\ 
\midrule
$(0.3,0.05)$ & 6.77e-4 & 1.90e-3 \\
\hline
$(0.5,0.05)$ & 1.34e-3 & 3.22e-3 \\
\hline
$(0.7,0.05)$ & 3.58e-3 & 7.20e-3 \\
\hline
$(0.8,1.05)$ & 7.59e-2 & 1.21e-1 \\
\bottomrule
\end{tabular}
\end{threeparttable}
\end{table}
\end{comment}

In case (ii), the parameter settings, training set, and test set are identical with that for case (i), except for the setting of $\alpha$: $\alpha$ is always positive in case (ii). The test results after 100 epochs are shown in Fig.\ \ref{fig:exam2.2}. It is observed that the training results in this case are significantly more stable than those in Fig.\ \ref{fig:exam2.1} (for case (i)) under the same setting. Indeed, we have achieved convergence of the training after 400 epochs (versus 800 epochs in case (i)). It can be seen that the data within the training distribution is well reconstructed, whereas the reconstruction for the data outside the training distribution is relatively poor and deteriorates as the distance from the training set increases, which also agrees with the analysis in Section \ref{sec:trainingset}. Moreover, the training images exhibit little oscillations, and the training behavior is much more stable compared with case (i). This indicates that the stability properties of these two cases are not identical, and a detailed analysis of this aspect is a topic that we wish to explore in future work.

\begin{figure}[htbp]
	\centering\setlength{\tabcolsep}{2pt}
	\begin{tabular}{ccccc}
		\includegraphics[height=2cm]  {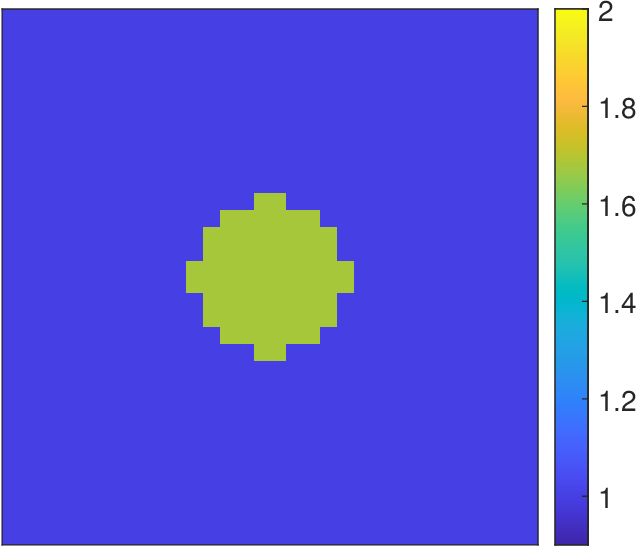} & \includegraphics[height=2cm]  {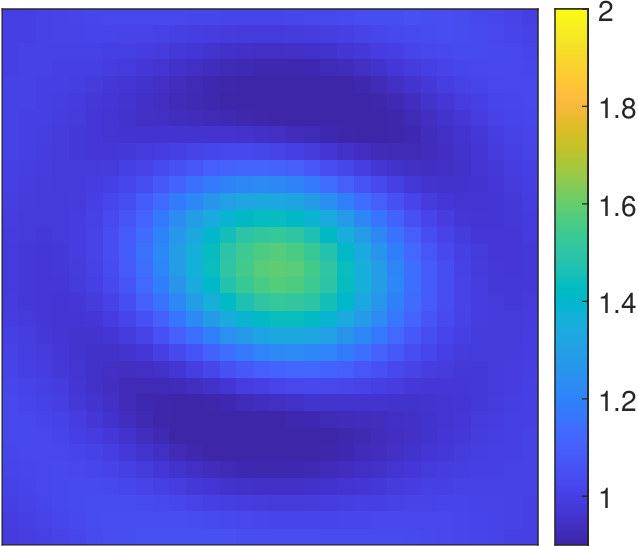} &
		\includegraphics[height=2cm]  {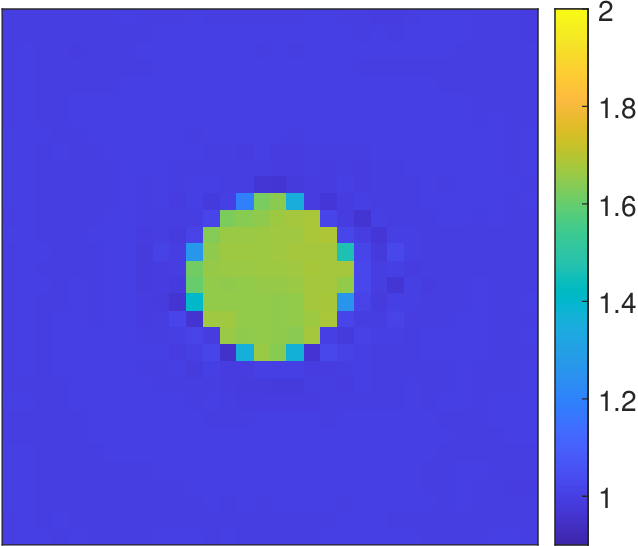} &
        \includegraphics[height=2cm]  {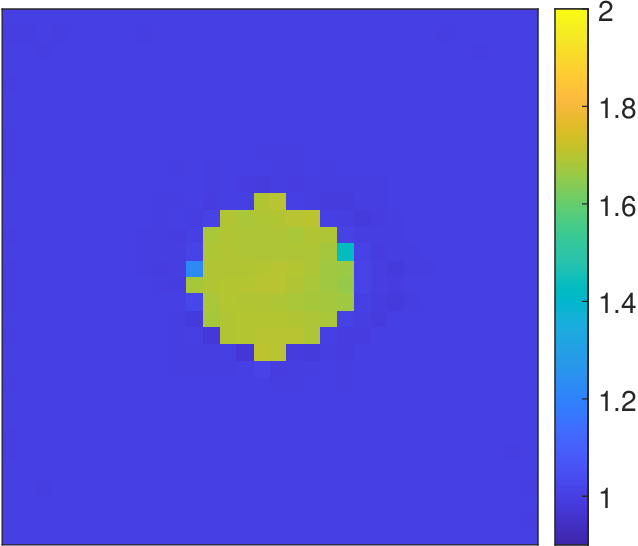} &
		\includegraphics[height=2cm]  {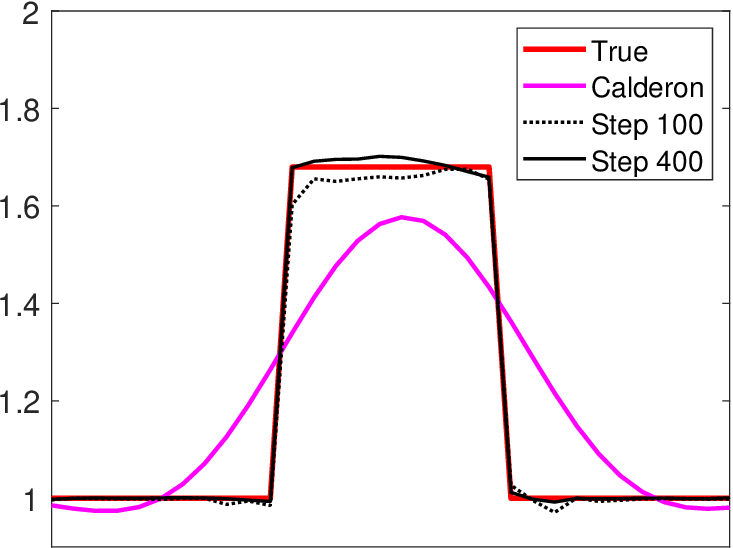} \\
		\includegraphics[height=2cm]  {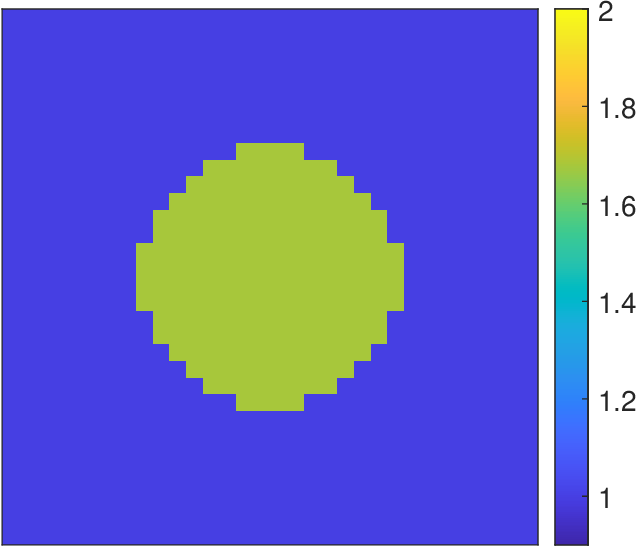} & \includegraphics[height=2cm]  {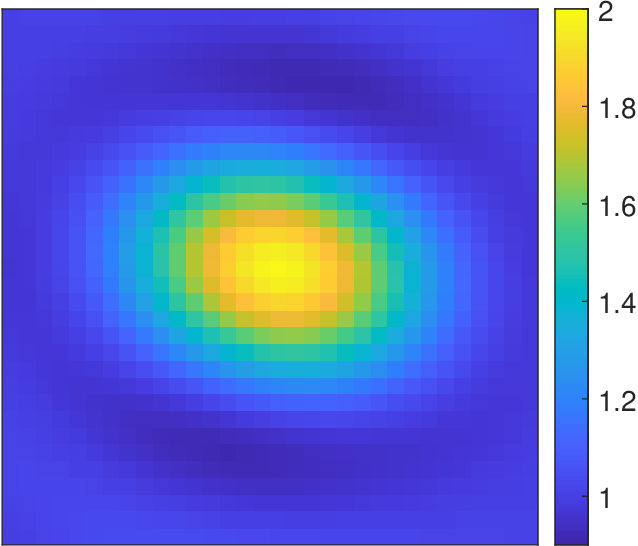} &
		\includegraphics[height=2cm]  {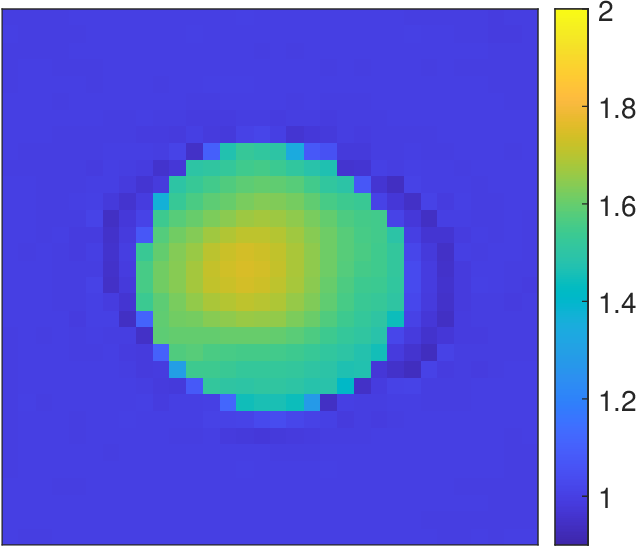} &
        \includegraphics[height=2cm]  {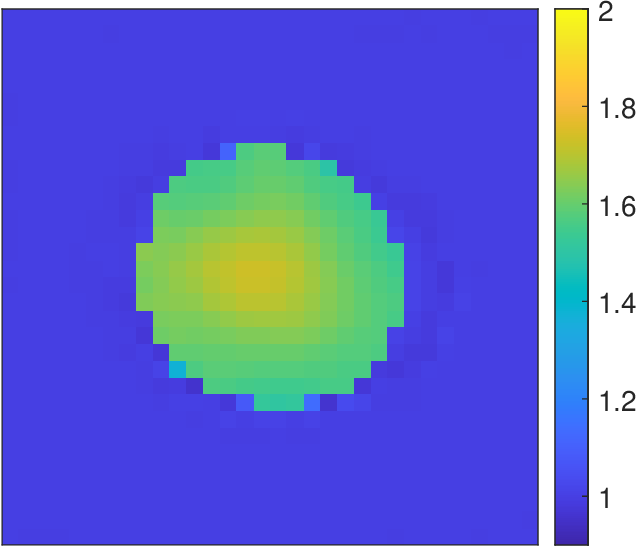} &
		\includegraphics[height=2cm]  {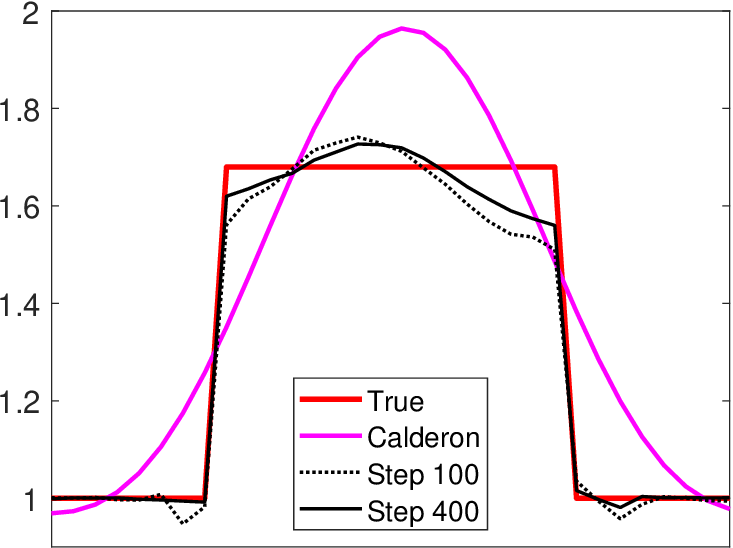}\\
		\includegraphics[height=2cm]  {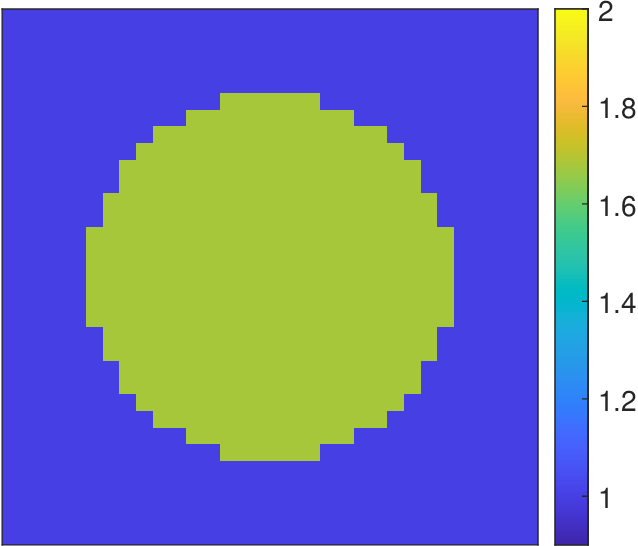} & \includegraphics[height=2cm]  {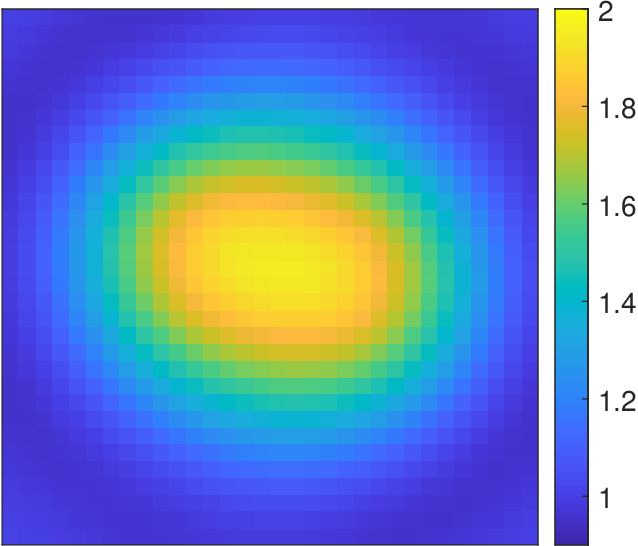} &
		\includegraphics[height=2cm]  {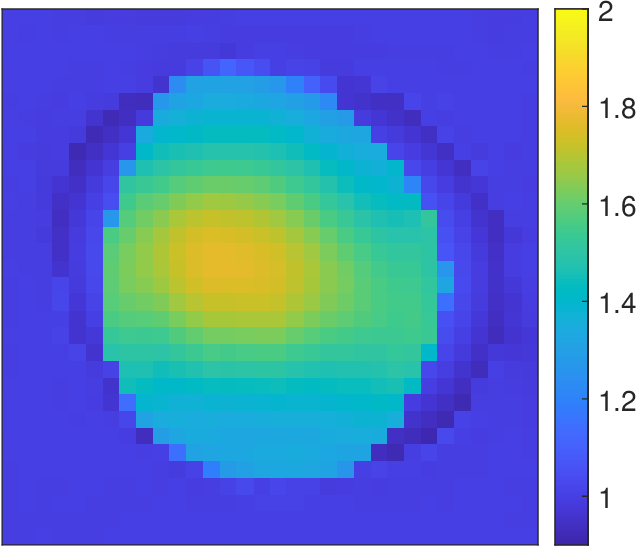} &
        \includegraphics[height=2cm]  {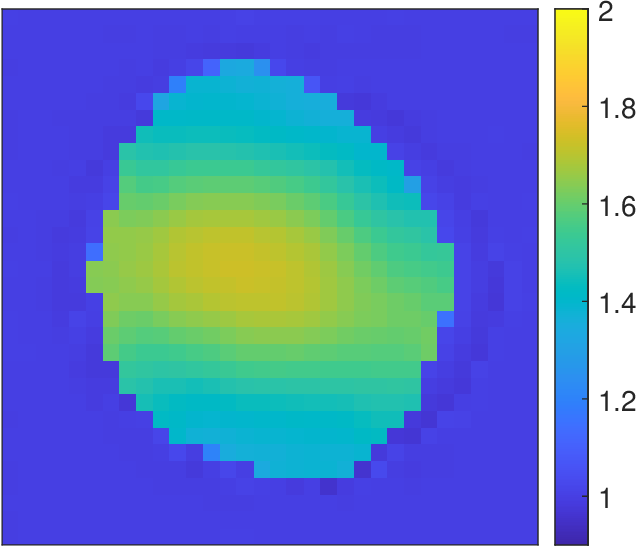} &
		\includegraphics[height=2cm]  {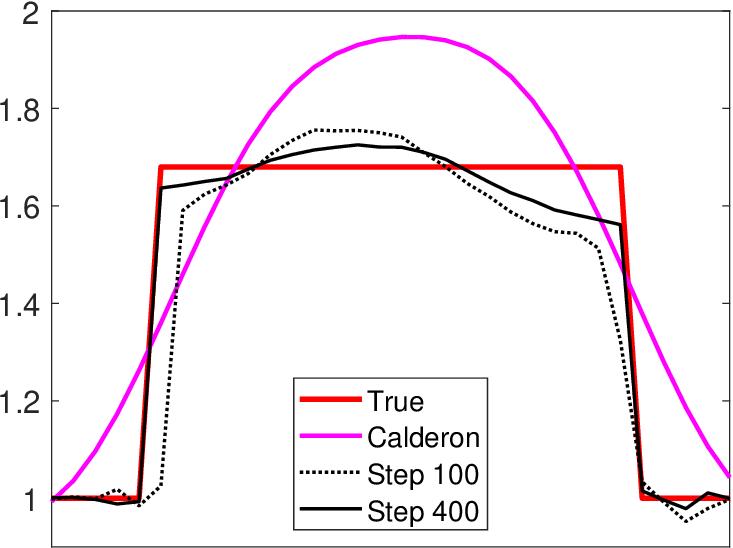}\\
        (a) true & (b) Calder\'on & (c) step 100 & (d) step 400 & (e) slice
	\end{tabular}
	\caption{\label{fig:exam2.2} The true image, Calder\'on reconstruction, deep reconstructions (after 100 and 400 epochs) and slice at $x_2=0$ for Example \ref{exam2} (ii), with $\beta_{t}=0.3$ (top), $0.5$ (middle), $0.7$ (below).}
\end{figure}

Case (iii) is to illustrate the generalization ability of the trained neural network. The test data involves multiple disks: 
$\delta_{t}(x)=\sum_{i=1}^n\alpha_i\chi_{\{x:|x-x_i|<0.2\}}$,
with $n=2$ or 3, and $\alpha_i\overset{\text{i.i.d.}}{\sim} U(0.2,1.2)$. The test results are shown in Fig.\ \ref{fig:exam2.3} with the information about the centers of the disks given in the caption. The current scenario differs somewhat from the Gaussian case. When the two disks are relatively close to each other, the deep Calder\'on method fails to resolve them. However, if the two disks are sufficiently far apart, the reconstruction can be successful. We hypothesize that when the two disks are close, their Fourier transforms might have cancellations so the new conductivity might not satisfy the stability condition in \eqref{eq-dn0}. When the disks are far apart, due to the decay of the Fourier transform, such cancellations are reduced. This point can also be supported by the non-negative Gaussians shown in Example \ref{exam1} (iii). There, the Fourier transforms are also Gaussians with fixed sign, so there are no significant cancellations. Nevertheless, note that the inversion of $\La_\gamma$ is highly nonlinear and that the choice of training data in this case is very specific. There might be other contributing factors to the generalizability. Regardless of these external factors, the trained neural network clearly enjoys considerable generalization capability, and can achieve successful reconstructions in both two- and three-disk scenarios when the disks are well-separated.

\begin{figure}[htbp]
	\centering\setlength{\tabcolsep}{2pt}
	\begin{tabular}{cccc}
		\includegraphics[height=2.2cm]  {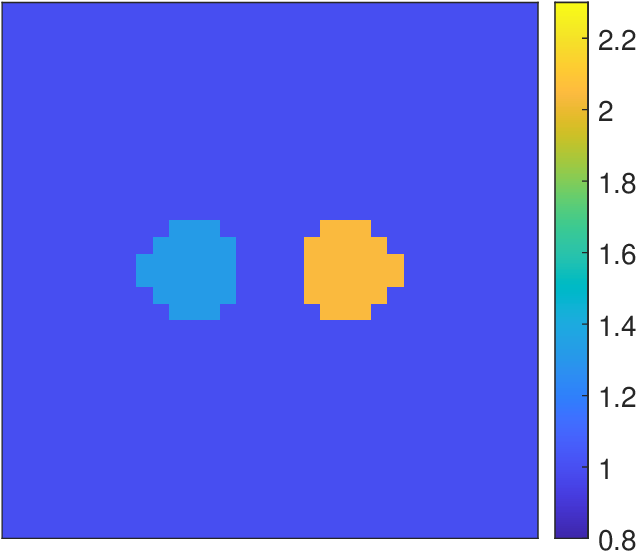} & \includegraphics[height=2.2cm]  {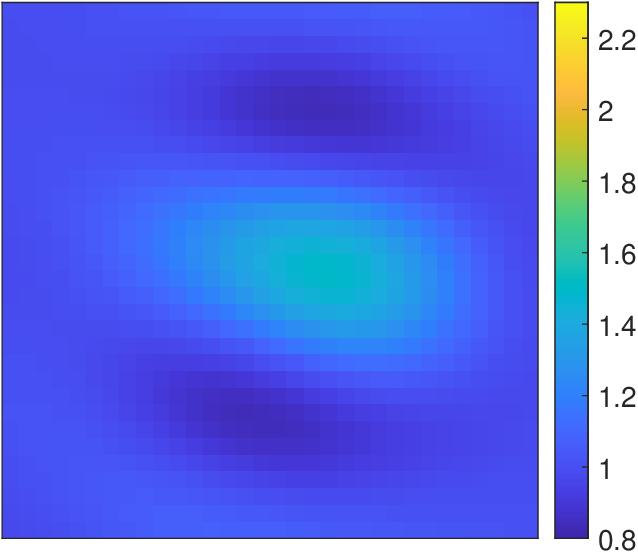} &
		\includegraphics[height=2.2cm]  {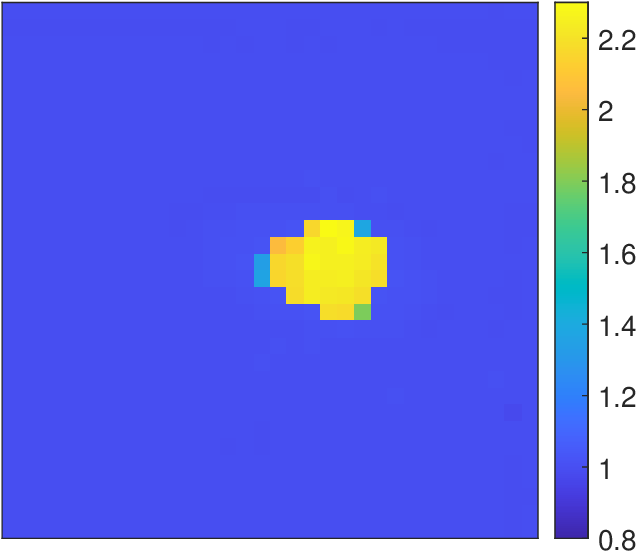} &
		\includegraphics[height=2.2cm]  {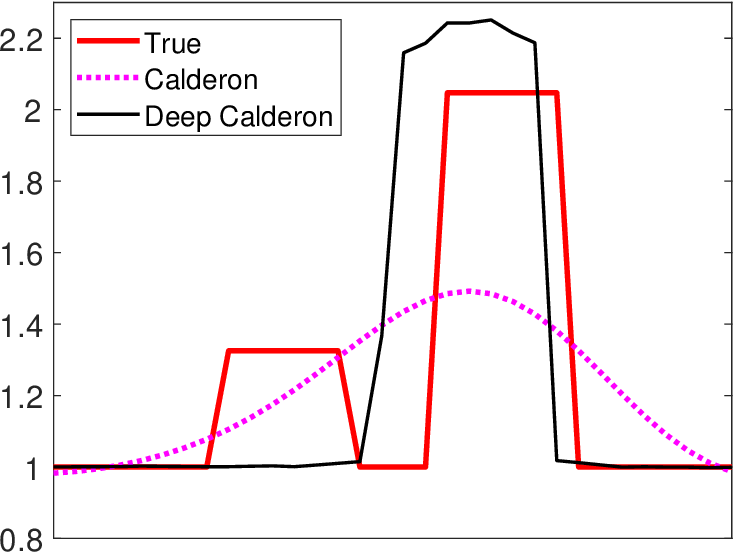} \\
		\includegraphics[height=2.2cm]  {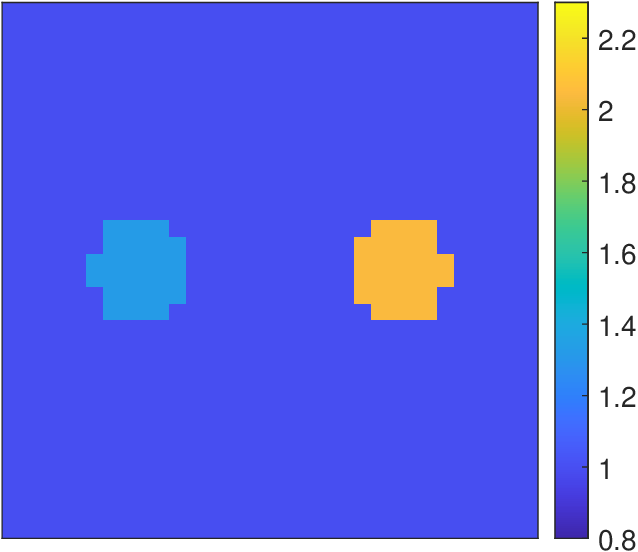} & \includegraphics[height=2.2cm]  {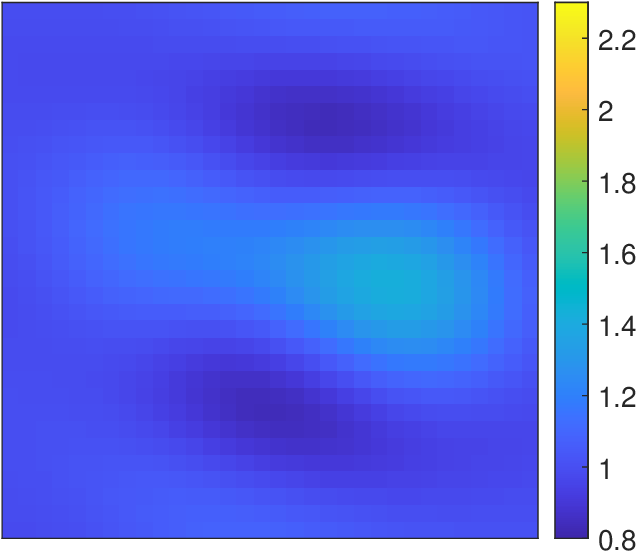} &
		\includegraphics[height=2.2cm]  {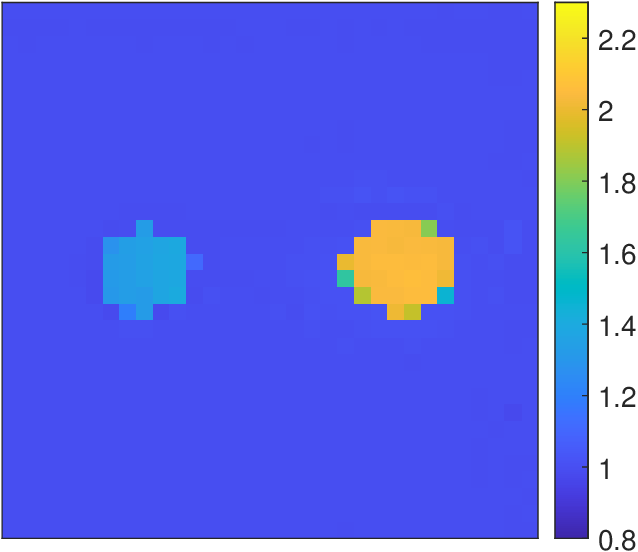} &
		\includegraphics[height=2.2cm]  {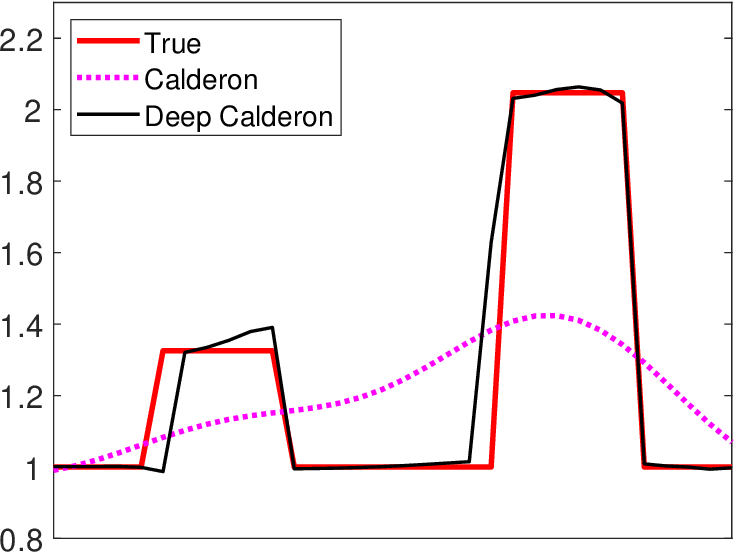}\\  \includegraphics[height=2.2cm]  {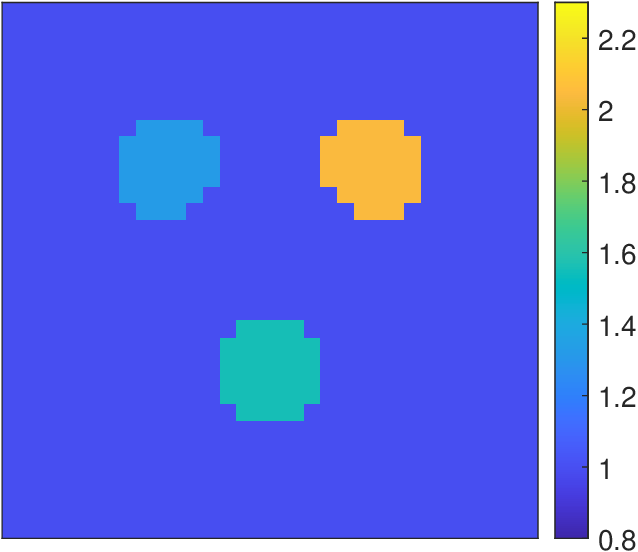} & \includegraphics[height=2.2cm]  {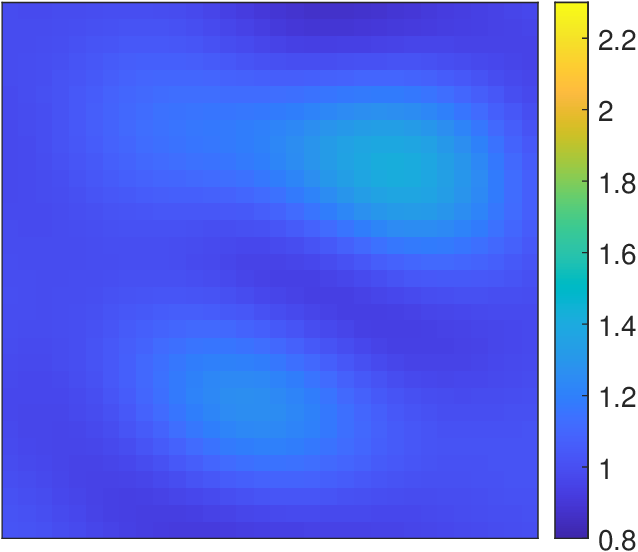} &
		\includegraphics[height=2.2cm]  {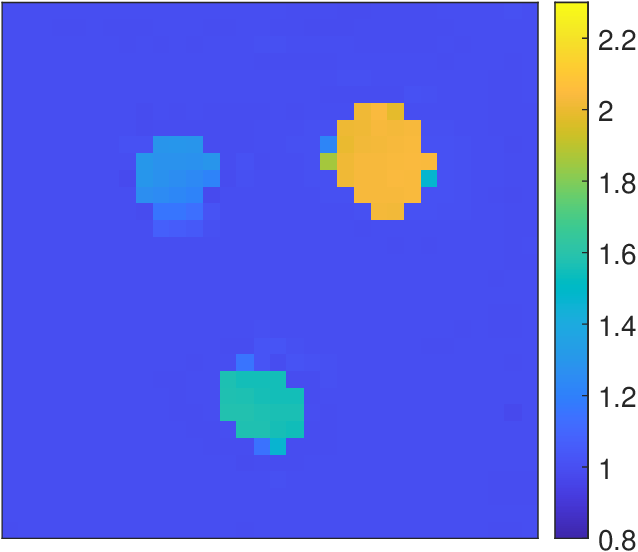} &
		\includegraphics[height=2.2cm]  {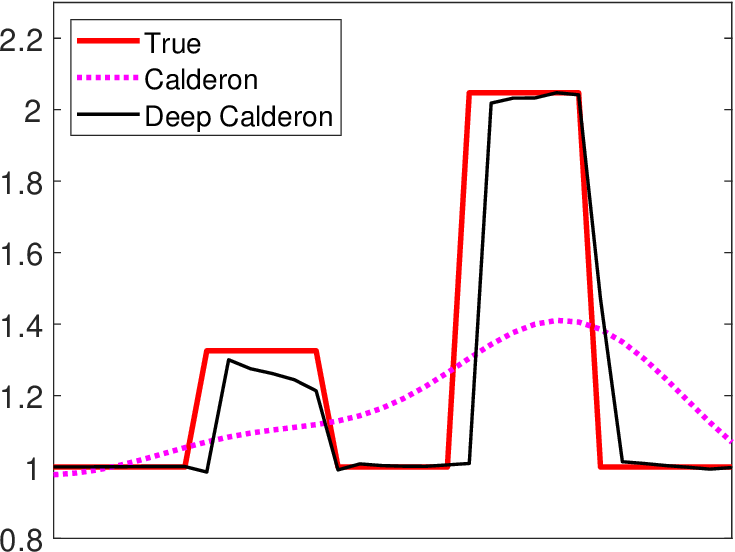}\\
         (a) true & (b) Calder\'on & (c) Deep Calder\'on  & (d) slice
	\end{tabular}
	\caption{\label{fig:exam2.3} The true image, Calder\'on reconstruction, deep reconstructions and slice at $x_2=0$ (2 disks case) and $x_2\approx0.3$ (3 disks case) for Example \ref{exam2} (iii) with multiple inclusions with the centers at: $(-0.3,0)$ and $(0.3,0)$ (top), $(-0.5,0)$ and $(0.5,0)$ (middle) and $(-0.4,0.4)$, $(0.4,0.4)$ and $(0,-0.4)$ (bottom).}
\end{figure}

Finally, we also show the dynamic changes of the loss function for the three cases in Fig.\ \ref{fig:exam2loss}. Note that the training loss decreases steadily throughout the epochs.
\begin{figure}[htbp]
	\centering\setlength{\tabcolsep}{2pt}
	\begin{tabular}{ccc}
		\includegraphics[height=3.7cm]  {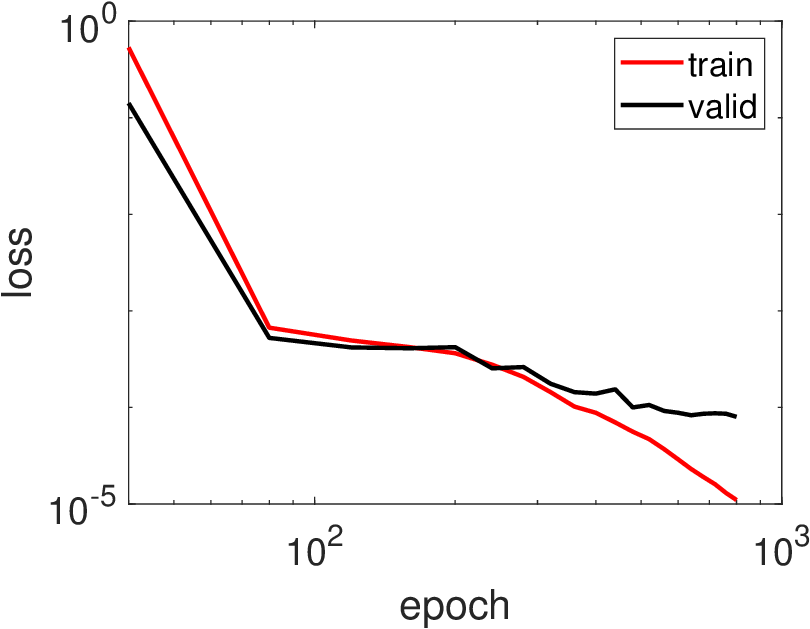} & \includegraphics[height=3.7cm]  {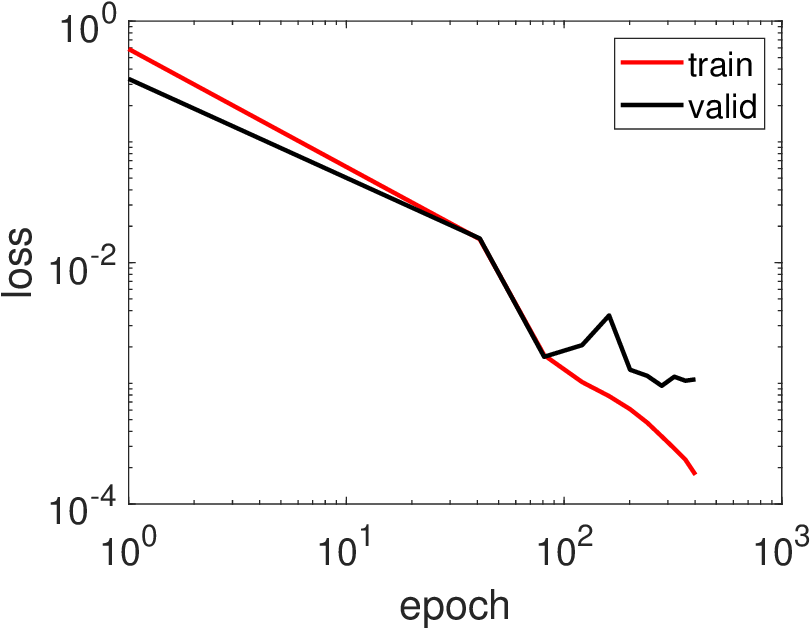} &
		\includegraphics[height=3.7cm]  {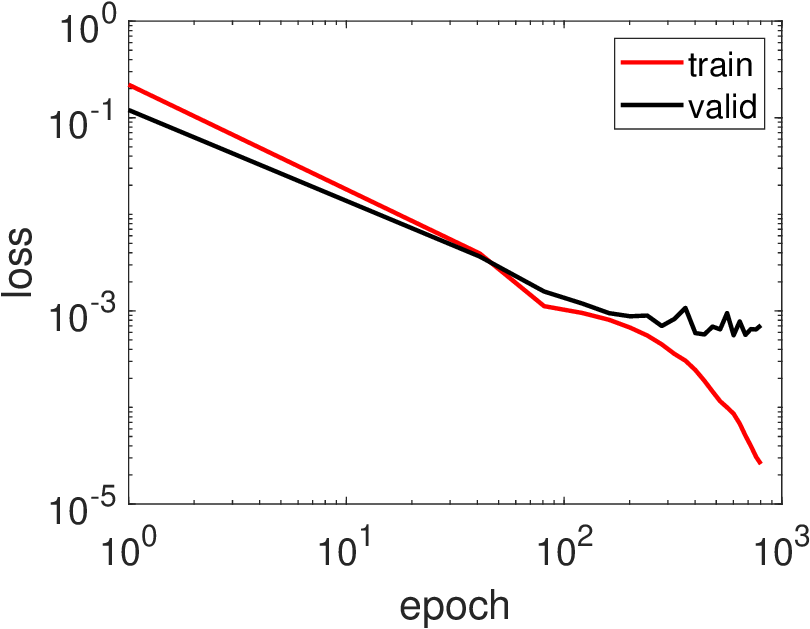} \\
        (a) (i) & (b) (ii) & (c) (iii)
	\end{tabular}
	\caption{\label{fig:exam2loss} The dynamics of the training error and validation loss for Example \ref{exam2}.}
\end{figure}

\section{Conclusions}

In recent years, deep learning methods have garnered significant traction for solving linear and nonlinear inverse problems, including electrical impedance tomography. Postprocessing type methods represent a widely employed class of deep learning based methods for reconstructing EIT images, but may exhibit instability in practice. In this work, we have investigated the deep Calder\'on method for anisotropic conductivity reconstruction. We theoretically established a Lipschitz stability for the inverse problem within a certain admissible class in both two- and high-dimensional cases. Moreover, we relate the stability estimates to the robustness of the deep Calder\'{o}n method on out-of-distribution data, by employing suitable training dataset. We complemented the theoretical findings by empirical evaluations.

\section*{Acknowledgments}
The authors are grateful to the two anonymous referees for their constructive comments and providing reference pointers which have greatly improved the quality of the paper. 

\bibliographystyle{siam}
\bibliography{dcm}

\end{document}